\begin{document}

\numberwithin{equation}{section}

{\theoremstyle{definition}\newtheorem{definition}{Definition}[section]
\newtheorem{notation}[definition]{Notation}
\newtheorem{remnot}[definition]{Remarks and notation}
\newtheorem{terminology}[definition]{Terminology}
\newtheorem{remark}[definition]{Remark}
\newtheorem{remarks}[definition]{Remarks}
\newtheorem{example}[definition]{Example}
\newtheorem{examples}[definition]{Examples}
\newtheorem{deflem}[definition]{Definition-Lemma}
\newtheorem{proposition}[definition]{Proposition}
\newtheorem{lemma}[definition]{Lemma}
\newtheorem{theorem}[definition]{Theorem}
\newtheorem{corollary}[definition]{Corollary}

\newcommand{\ci}{C^{\infty}}
\newcommand{\A}{\mathscr{A}}
\newcommand{\Cat}{\mathscr{C}}
\newcommand{\Dnc}{\mathscr{D}}
\newcommand{\E}{\mathscr{E}}
\newcommand{\F}{\mathscr{F}}
\newcommand{\gr}{\mathscr{G}}
\newcommand{\go}{\mathscr{G} ^{(0)}}
\newcommand{\hr}{\mathscr{H}}
\newcommand{\ho}{\mathscr{H} ^{(0)}}
\newcommand{\rgr}{\mathscr{R}}
\newcommand{\rgo}{\mathscr{R} ^{(0)}}
\newcommand{\lr}{\mathscr{L}}
\newcommand{\lo}{\mathscr{} ^{(0)}}
\newcommand{\gd}{\mathscr{G}^{\mathbb{R}^2}}
\newcommand{\gt}{\mathscr{G} ^{T}}
\newcommand{\I}{\mathscr{I}}
\newcommand{\Nb}{\mathscr{N}}
\newcommand{\Kom}{\mathscr{K}}
\newcommand{\ops}{\mathscr{O}}
\newcommand{\Pb}{\mathscr{P}}
\newcommand{\sw}{\mathscr{S}}
\newcommand{\T}{\mathscr{T}}
\newcommand{\Uo}{\mathscr{U}}
\newcommand{\Vo}{\mathscr{V}}
\newcommand{\Wo}{\mathscr{W}}
\newcommand{\Rr}{\mathbb{R}}
\newcommand{\Nat}{\mathbb{N}}
\newcommand{\C}{\mathbb{C}}
\newcommand{\src}{\mathscr{S}_{c}}
\newcommand{\cc}{C_{c}^{\infty}}
\newcommand{\cg}{C_{c}^{\infty}(\gr)}
\newcommand{\cgo}{C_{c}^{\infty}(\go)}
\newcommand{\ct}{C_{c}^{\infty}(\gr^T)}
\newcommand{\ca}{C_{c}^{\infty}(A\gr)}
\newcommand{\Un}{{U}^{(n)}}
\newcommand{\Du}{D_{\mathscr{U}}}
\newcommand{\cT}{\mathcal{T}}

\renewcommand{\o}{ \mathfrak {o }}

  \renewcommand{\a}{\alpha}
  \renewcommand{\b}{\beta}
   \newcommand{\io}{\iota}
  
  \newcommand{\eps}{\epsilon}
  \renewcommand{\d}{\delta}
  \newcommand{\pa}{\partial}

  \newcommand{\ind}{{\bf  Index }}

  \newcommand{\CC}{{\mathbb C}}
  \newcommand{\RR}{{\mathbb R}}
  \newcommand{\ZZ}{{\mathbb Z}}
  \newcommand{\QQ}{{\mathbb Q}}
  \newcommand{\NN}{{\mathbb  N}}
    \newcommand{\KK}{{\mathbb  K}}

  \newcommand{\PP}{{\mathbb P}}

\def\to{\longrightarrow}

\def\L{\mathop{\wedge}}

\def\gpd{\,\lower1pt\hbox{$\longrightarrow$}\hskip-.24in\raise2pt
             \hbox{$\longrightarrow$}\,}

\begin{center}
{\Large\bf Geometric Baum-Connes assembly map for twisted Differentiable Stacks
}

\bigskip

{\sc by Paulo Carrillo Rouse and Bai-Ling Wang}

\end{center}

{\footnotesize
\vskip 2pt Institut de Math\'ematiques de Toulouse
\vskip -4pt Universit\'e de Toulouse
\vskip -4pt F-31062 Toulouse cedex 9, France
\vskip -4pt e-mail: paulo.carrillo@math.univ-toulouse.fr

\vskip 2ptDepartment of Mathematics
\vskip-4pt Mathematical Sciences Institute
\vskip-4ptAustralian National University
\vskip-4pt Canberra, ACT 2600, Australia.
\vskip -4pt e-mail: bai-ling.wang@anu.edu.au
}
\bigskip
\everymath={\displaystyle}

\begin{abstract}
\noindent 
We construct the geometric Baum-Connes assembly map for twisted Lie groupoids, that means for Lie groupoids together with a given groupoid equivariant $PU(H)-$principle bundle. The construction  is based on the use of geometric deformation groupoids, these objects allow in particular to give a geometric construction of the associated pushforward maps and to  establish  the functoriality. The main results in this paper are  to define the geometric twisted K-homology groups and to construct the assembly map.
Even in the untwisted case the fact that the geometric  K-homology groups and the geometric assembly map are well defined for Lie groupoids is new, as it was only sketched by Connes in his book for general Lie groupoids without any restrictive hypothesis, in particular for non Hausdorff Lie groupoids.

We also prove the Morita invariance of the assembly map, giving thus a precise meaning to the geometric assembly map for twisted differentiable stacks. We discuss the relation of the assembly map with the associated assembly map of the $S^1$-central extension. The relation with the analytic assembly map is treated, as well as some cases in which we have an isomorphism. One important tool is the twisted Thom isomorphism in the groupoid equivariant case which we establish in the appendix.

\begin{center}
{\bf R\'esum\'e}
\end{center}
Nous construisons le morphisme d'assemblage g\'eom\'etrique de Baum-Connes pour des groupo\"ides de Lie tordus, \`a savoir des groupo\"ides de Lie avec un $PU(H)$-fibr\'e principal equivariant. La construction est bas\'e dans l'utilisation des groupo\"ides de d\'eformation, ces objets permettent en particulier de donner une construction g\'eom\'etrique des morphismes shriek associ\'es et d'\'etablir la fonctorialit\'e. Les r\'esultats principaux de cet article sont la d\'efinition des groupes de K-homologie g\'eom\'etrique tordue et la construction du morphisme d'assemblage.
M\^eme dans le cas non tordu le fait que les groupes de K-homologie g\'eom\'etrique et le morphisme d'assemblage (g\'eom\'etrique) pour des groupo\"ides de Lie sont bien d\'efinis est nouveau, en effet, ceci a \'et\'e esquiss\'e par Connes dans son livre pour des groupo\"ides de Lie g\'en\'erales sans aucune restriction, en particulier pour des groupo\"ides non s\'epar\'es.

Nous montrons aussi l'invariance par Morita du morphisme d'assemblage, donnant ainsi un sens pr\'ecis au morphisme d'assemblage g\'eom\'etrique de Baum-Connes pour des Champs diff\'erentiables tordus. Nous discutons la relation de notre morphisme d'assemblage avec le morphisme associ\'e \`a la $S^1$-extension central. La relation avec le morphisme analytique est trait\'e, ainsi que quelques cas o\`u il y a isomorphisme. Un outil important est le morphisme de Thom tordu dans le cas equivariant par rapport \`a un groupo\"ide que nous \'etablissons dans l'appendice.

\end{abstract}

Keywords: Twisted K-theory, Index theory, Lie groupoids and Differentiable stacks. MSC 58J22 (19K35,19K56,46L80)
Mots-cl\'es: K-th\'eorie tordue, Th\'eorie de l'indice, Groupo\"ides de Lie et Champs diff\'erentiables. 

\tableofcontents

\section{Introduction}
The present paper is a natural sequel of \cite{CaWangAdv} where we started a study of an index theory for foliations with the presence of $PU(H)$-twistings (see also \cite{CaWangCRAS}).

In \cite{BC} Baum and Connes introduced a geometrically defined K-theory for Lie groups, group actions and foliations. Its main features are its computability and simplicity of its definition, besides, in some cases they were able to construct a (also geometric) Chern character. Using classic ideas from index theory they constructed a natural map from this group to the analytic K-theory. This so-called Baum-Connes assembly map gave rise to many research developments due to its connection to many areas of mathematics and mathematical physics. Very interesting geometric and analytic corollaries can be deduced from the injectivity, surjectivity or bijectivity of the Baum-Connes map. Shortly after the paper by Baum-Connes, the powerful tools of KK-theory took over the originally geometrically defined map. Indeed, the use of KK-theory to define the assembly map have given extraordinary results. However the original geometrically defined map was somehow lost. In fact for some years experts assume both approaches to be the same but it took some years to give the actual proof for some cases.

The geometric approach is very interesting for several reasons, for instance the use of geometric K-homology in index theory and hence a completely geometric way of doing index theory, the possibility of defining a (geometric) Chern character from the geometric K-homology and hence to obtain explicit formulae. It is more suitable for geometric situation for which the analytic approach is not yet understood, for example for general Lie groupoids the analytic assembly is only defined for Hausdorff groupoids.  

In this paper we construct the geometric Baum-Connes map for a twisted Lie groupoid $(\gr, \alpha)$, that is a  Lie groupoid
$\gr$  together with a given equivariant (with respect to the groupoid action) $PU(H)-$principle bundle on $\gr$. Equivalently,  a twisting is given by a Hilsum-Skandalis morphism
\[
  \xymatrix{\alpha:\, \gr \ar@{-->}[r] &PU(H)}.
\]
  Even in the untwisted case this was not done before. In fact, in Connes book (\cite{Concg} II.10.$\alpha$), he proposes a definition for the geometric group of a Lie groupoid and he sketches the construction for the assembly map using deformation groupoids ideas that englobes what he did in \cite{BC} with Baum. We utilize  these ideas  to
study the assembly map for the twisted case.

Let $\gr\rightrightarrows M$ be a Lie groupoid with a given twisting $\alpha$ on $\gr$. Given such a data we can consider the maximal $C^*$-algebra $C^*(\gr,\alpha)$ (or reduced if indicated), the algebra is constructed by taking a $S^1$-central extension $R_\alpha$ associated to $\alpha$ via the canonical $S^1$-central extension $S^1\to U(H)\to PU(H)$ and using one factor of the algebra associated to such extension\footnote{The extension depends of the choice of a cocycle defining $\alpha$, however two such extensions are Morita equivalent via an explicit equivalence and hence the algebras they define are Morita  equivalent as well.}, for complete details see section 3.1 below. 

Now, consider a $\gr$-manifold  $P$ with momentum map   $\pi_P:P\to M$  which is assumed to be a submersion. Denote by $T^vP$ the vertical tangent bundle associated to $\pi_P$. In this paper we will assume that for any $\gr$-manifold $P$, $T^vP$ is an oriented vector bundle which admits a $\gr$-invariant metric, for instance when $\gr$ acts on $P$ properly or when $P=M$. We will denote the twisted analytic K-theory groups of the action groupoid $P\rtimes \gr$ by
$$K^*(P\rtimes \gr,\alpha):=K_{-*}(C^{*}(P\rtimes \gr,\pi_P^*\alpha))$$
where $\pi_P^*\alpha$ is the  pull-back twisting on $P\rtimes \gr$  by  the groupoid morphism $\pi_P:P\rtimes \gr \to \gr$ (we use the same notation for $\pi_P$ at the level of the arrows). One can consider a $S^1$-central extension $R_\alpha$ over a Cech groupoid $\gr_\Omega$ (Morita equivalent to $\gr$).  If there is an extra twisting we will add it in the notation and explain it case by case.

Let $P,N$ be two $\gr$-manifolds and $f:P\longrightarrow N$ a $\gr$-equivariant oriented smooth map. Using only geometric deformation groupoids, we  construct a morphism\footnote{In \cite{CaWangAdv} the special case where $\gr$ is the holonomy groupoid of a foliation and the action on $P$ is free is treated, we proved there in particular the functoriality as an application of a longitudinal index theorem.} , the shriek map, 
\begin{equation}\label{fshriekintrosection0}
\xymatrix{
K^*(P\rtimes \gr,\alpha+\o_f)\ar[r]^-{f_!}&K^*(N\rtimes \gr,\alpha)
}
\end{equation}
where $\o_f$ is the  orientation twisting\footnote{in the sense of example \ref{obundle} in \ref{example}.} over $P\rtimes \gr$ of   the 
$\gr$-vector bundle $f^*T^vN\oplus T^vP$.

We remark  that the construction of the shriek map is by means of deformation groupoids, this gives a explicit geometric pushforward map that gives exactly the corresponding  equivariant family index when $f$ is  a submersion. Moreover, we establish the functoriality of the construction by again only using deformation groupoids, this gives a very geometric flavour to the proof, indeed one can understand the functoriality via a double deformation from one groupoid to another one. As we mentioned above, this was not done before even in the untwisted case, in fact, in \cite{Concg} (section II.6) Connes sketched the construction for the classic pushforward between manifolds using deformation groupoids and left the proof of  the functoriality as an exercise. We remark  that the result below (theorem \ref{twistfun}) was proved (for $f, g$ submersions) using analytic methods by Tu and Xu (\cite{TXring} 4.19), the statement is the following:

\begin{theorem}
The push-forward morphism (\ref{fshriekintrosection0})  is functorial, that means, if we have a composition of smooth $\gr$-oriented smooth maps between two 
$\gr-$manifolds 
$
P\stackrel{f}{\longrightarrow}N\stackrel{g}{\longrightarrow}L,
$
and a twisting  $\xymatrix{\alpha:\gr \ar@{-->}[r] &PU(H)}$,  then the following diagram commutes
\[
\xymatrix{
K^*(P\rtimes \gr,\alpha+\o_{g\circ f}) \ar[rr]^-{(g\circ f)_!}\ar[rd]_-{f_!}&&K^*(L\rtimes \gr,\alpha)\\
&K^*(N\rtimes \gr,\alpha+\o_g)\ar[ru]_-{g_!}&
}
\]

\end{theorem}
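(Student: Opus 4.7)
The plan is to realize both $(g\circ f)_!$ and $g_!\circ f_!$ as two legs of a single commutative diagram coming from a \emph{two-parameter} deformation groupoid, in the spirit of the ``double deformation'' strategy mentioned in the introduction. First I would recall the construction of $\varphi_!$ for a single $\gr$-equivariant smooth map $\varphi:P\to N$: one factors $\varphi$ as the graph immersion $P\hookrightarrow P\times_M N$ followed by the canonical submersion onto $N$, and attaches to this factorization a one-parameter deformation groupoid $\cT_\varphi$ whose fibre at $t=0$ is the $\gr$-equivariant vector bundle $\varphi^*T^vN\oplus T^vP$ and whose fibre at $t>0$ identifies with $(P\times_M N)\rtimes\gr$. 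The map $\varphi_!$ is then the composite of $(ev_1)_*\circ(ev_0)_*^{-1}$ with the twisted Thom isomorphism of the appendix, which is what absorbs the orientation twisting $\o_\varphi$ on $P\rtimes\gr$.

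Next I would build a two-parameter deformation groupoid $\cT_{f,g}$, parameterized by the square $[0,1]^2$, obtained by iterating this construction along the longer factorization
\[
P\hookrightarrow P\times_M N\to N\hookrightarrow N\times_M L\to L,
\]
with the $s$-coordinate of the square deforming the $f$-piece and the $t$-coordinate deforming the $g$-piece. The boundary strata recover the known objects: the edge $\{s=1\}$ yields $\cT_g$ pulled back along $f$, the edge $\{t=1\}$ yields $\cT_f$, the corner $(1,1)$ reduces to $L\rtimes\gr$, and the diagonal $\{s=t\}$, after a Thom identification coming from the short exact sequence $0\to T^vf\to T^v(g\circ f)\to f^*T^vg\to 0$, is identified with $\cT_{g\circ f}$. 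Passing to twisted $C^*$-algebras and evaluating at these strata then produces a commutative square in twisted K-theory whose two composite legs are precisely $g_!\circ f_!$ and $(g\circ f)_!$, giving the desired equality.

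The twistings match up because of the isomorphism
\[
f^*g^*T^vL\oplus T^vP\oplus(f^*T^vN)^{\oplus 2}\;\cong\;\bigl(f^*T^vN\oplus T^vP\bigr)\oplus f^*\bigl(g^*T^vL\oplus T^vN\bigr),
\]
combined with the fact that the even-rank bundle $(f^*T^vN)^{\oplus 2}$ carries a canonically trivial orientation twisting; this yields $\o_{g\circ f}=\o_f+f^*\o_g$ as twistings on $P\rtimes\gr$, which is exactly what is needed to chase the twisting $\alpha+\o_{g\circ f}$ coherently through the square.

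The main obstacle I expect is to endow $\cT_{f,g}$ with a genuine Lie groupoid structure at the corner $(0,0)$ of the square, so that both parameter directions behave as Connes-type tangent-groupoid deformations simultaneously; this is where the flexibility of working with general, possibly non-Hausdorff, Lie groupoids is essential, since the off-diagonal strata of $[0,1]^2$ will typically produce non-Hausdorff behaviour that must be tracked carefully. Once this smoothness is in hand, the rest is formal: naturality of the twisted Thom isomorphism of the appendix with respect to restriction to each boundary stratum, combined with the fact that $(ev_0)_*$ is a K-equivalence in each deformation direction, forces the square to commute.
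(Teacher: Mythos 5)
Your skeleton is the right one in spirit: the paper's own argument is precisely a ``deformation of deformations'', with the groupoid $\mathbb{D}=\mathcal{DL}\rtimes(\Dnc_{T^vN}\rtimes\gr_f)\bigsqcup \gr_{(g\circ f,f^2)}\times(0,1]$ playing the role of your $\cT_{f,g}$, and the stable bundle isomorphism you write down (together with the canonical triviality of $\o_{(f^*T^vN)^{\oplus 2}}$) is exactly what reconciles the twistings. But as written the proposal has two genuine gaps, the first of which you flag yourself without resolving. First, the two-parameter groupoid is never constructed, and the identification of its strata is where the content lives. In particular the diagonal stratum cannot be matched with $\cT_{g\circ f}$ via the sequence $0\to T^vf\to T^v(g\circ f)\to f^*T^vg\to 0$: the theorem is stated for arbitrary smooth $\gr$-equivariant maps (only the momentum maps are submersions), so $df$ need not have constant rank and the relative tangent bundles $T^vf$, $T^vg$ do not exist. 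This is exactly why the construction must be stabilized by $T^vP$, and why the paper replaces your diagonal identification by two separate mechanisms: the Thom groupoid $\mathbf{T}_N$ deforming $f^*T^vN\oplus f^*T^vN$ into the fiber-product groupoid $f^*T^vN\times_Pf^*T^vN$ (its diagrams VI and VIII), and the Morita collapse $L\times_M(N\times_M N)\times_M(P\times_M P)\to L\times_M(P\times_M P)$ that kills the intermediate factor $N$ (its diagram VII). Neither appears in your square, and without them $g_!\circ f_!$ and $(g\circ f)_!$ simply do not land on the two legs of a common diagram.

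Second, ``naturality of the twisted Thom isomorphism with respect to restriction to the boundary strata'' is not formal and is the technical bulk of the proof. Since each of $f_!$, $g_!$, $(g\circ f)_!$ \emph{begins} with Thom isomorphisms, commuting them past the deformation parameters requires constructing Thom elements over the deformation groupoids themselves, i.e.\ exhibiting $\Dnc_{T^vN}$ and $\mathcal{DL}$ as $\gr_f$-equivariant vector bundles over $\gr_f^{(0)}$ and invoking the KK-naturality statements of Proposition \ref{Thomproperties}; this occupies diagrams V and VIII of the paper. A smaller point: your recalled one-step construction is also off --- the $t>0$ fibre must be a groupoid Morita equivalent to $N\rtimes\gr$ (the paper uses $N\times_M(P\times_M P)\rtimes\gr$), whereas $(P\times_M N)\rtimes\gr$ as written is not. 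So the plan is sound, but the proof is missing exactly the constructions that make the theorem nontrivial.
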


The above theorem enables us to define the associated geometric K-homology group for a  Lie groupoid with a twisting. 

\begin{definition}[Twisted geometric K-homology]
Let $\gr\rightrightarrows M$ be a Lie groupoid with a twisting 
$\alpha:\gr--->PU(H)$. The twisted geometric K-homology group  associated to 
$(\gr,\alpha)$ is the abelian group denoted by $K_{*}^{geo}(\gr,\alpha)$ with generators and relations described as follows. A generator is called a   cycle  $(P,\xi)$ where
\begin{itemize}
\item[(1)]   $P$ is a smooth  co-compact $\gr$-proper manifold,
\item[(2)]   $\pi_P:P\to M$ is  the smooth momentum map which is supposed to be an oriented submersion, and
\item[(3)] $\xi \in  K^{*}(P\rtimes \gr, \pi_P^*\alpha+\o_{T^vP})$,
\end{itemize}
and two cycles $(P, \xi)$ and $(P, \xi')$ are called equivalent if there is  a 
  smooth $\gr$-equivariant map  $g:P\to P'$  such that 
\begin{equation}
\xi' = g_! (\xi).
\end{equation}
\end{definition}

One of the reasons for calling this group "geometric" is that the groupoid $P\rtimes \gr$ is proper and hence its twisted K-theory can be expressed in good cases by twisted vector bundles (\cite{TXL} theorem 5.28). Another important reason is that from the twisted K-theory for proper \'etale groupoids Tu and Xu constructed the Baum-Connes delocalized Chern character with values in the twisted cohomology of the associated inertia groupoid, they prove that their Chern character gives a rational isomorphism, \cite{TXChern}.
We will come to this discussion later. For the moment let us mention that we can perform some basic computations, see Example
\ref{exfinalobj}. 

Now we summarize Theorems \ref{MoritageoK}, \ref{twistedassemblymap} and \ref{MoritaAS}  in this paper as  follows.

 \begin{theorem}
 Let $(P,x)$ be a geometric cycle over $(\gr,\alpha)$. Let $\mu_{\alpha}(P,x)=(\pi_P)_!(x)$ be the element   in $K^*(\gr,\alpha)$. Then
  $\mu_{\alpha}(P,x)$ only depends upon the equivalence class of the twisted cycle  $(P,x)$. Hence we have a well defined assembly map
\begin{equation}
\mu_{\alpha}:K_{*}^{geo}(\gr,\alpha)\longrightarrow K^*(\gr,\alpha). 
\end{equation}
Moreover,  the assembly map satisfies   the Morita invariance  in the following sense: 
Let $\gr$ and $\gr'$ are two Morita equivalent groupoids. Let us denote by $\gr\stackrel{\phi}{--->}\gr'$ the generalized isomorphism (the Morita bi-bundle). Given  a twisting  $\alpha':\gr'---> PU(H)$, there is a commutative diagram
\begin{equation}
\xymatrix{
K^{geo}(\gr,\alpha)\ar[r]^-{\phi_*}_-{\cong}\ar[d]_-{\mu_\alpha}&K^{geo}(\gr',\alpha')\ar[d]^-{\mu_\alpha'}\\
K^*(\gr,\alpha)\ar[r]_-{\phi_*}^-{\cong}&K^*(\gr',\alpha')
}
\end{equation}
where $\alpha:=\alpha'\circ\phi$ is the induced twisting on $\gr$.
\end{theorem}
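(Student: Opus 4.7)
The plan is to reduce each of the three assertions to the functoriality theorem for the shriek map together with the Morita invariance of analytic twisted K-theory for proper action groupoids.

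First, for the well-definedness of $\mu_{\alpha}$ on equivalence classes, suppose $(P,\xi)$ and $(P',\xi')$ are two equivalent cycles, so that there is a $\gr$-equivariant smooth map $g:P\to P'$ with $\xi'=g_!(\xi)$. Since both momentum maps target $M$ and $g$ is $\gr$-equivariant covering the identity on $M$, one has $\pi_P=\pi_{P'}\circ g$. Applying the functoriality theorem to the composition $P\stackrel{g}{\to}P'\stackrel{\pi_{P'}}{\to}M$, I would conclude
$$\mu_\alpha(P,\xi)=(\pi_P)_!(\xi)=(\pi_{P'}\circ g)_!(\xi)=(\pi_{P'})_!(g_!(\xi))=(\pi_{P'})_!(\xi')=\mu_\alpha(P',\xi').$$
Passing to the equivalence relation generated by these elementary moves, $\mu_\alpha$ descends to a well-defined homomorphism out of $K^{geo}_*(\gr,\alpha)$.

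Second, to set up the Morita invariance, let $\phi$ denote the Morita bi-bundle $Z$ implementing $\gr\sim\gr'$. For any $\gr$-manifold $P$ with submersive momentum map, the balanced product $P':=P\times_\gr Z$ carries a natural structure of $\gr'$-manifold, and the action groupoids $P\rtimes\gr$ and $P'\rtimes\gr'$ are Morita equivalent. Under this induced equivalence the momentum maps correspond, the vertical tangent bundles $T^vP$ and $T^vP'$ are identified after lifting to $P\times_M Z$, and the twistings $\pi_P^*\alpha+\o_{T^vP}$ and $\pi_{P'}^*\alpha'+\o_{T^vP'}$ correspond. The Morita invariance of analytic twisted K-theory then yields a canonical isomorphism between $K^*(P\rtimes\gr, \pi_P^*\alpha + \o_{T^vP})$ and $K^*(P'\rtimes\gr', \pi_{P'}^*\alpha' + \o_{T^vP'})$; I define $\phi_*[P,\xi]:=[P',\xi']$ where $\xi'$ is the image of $\xi$ under this isomorphism. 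The well-definedness on $K^{geo}$ follows from the functoriality of the assignment $P\mapsto P'$ under $\gr$-equivariant maps combined with the first part.

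Third, the commutativity of the Morita square is the naturality of the shriek map under Morita equivalence. Because the pushforward $(\pi_P)_!$ is constructed entirely from geometric deformation groupoids (tangent groupoids together with the twisted symbol classes associated to the $S^1$-central extensions representing $\alpha$), and because Morita equivalence of $\gr$ and $\gr'$ propagates through each stage of the deformation — the tangent groupoids, their restrictions, and the chosen $S^1$-central extensions are all transported into one another — the KK-element defining $(\pi_P)_!$ corresponds, under $\phi_*$ on analytic twisted K-theory, to the KK-element defining $(\pi_{P'})_!$. A diagram chase then yields $\phi_*\circ\mu_\alpha=\mu_{\alpha'}\circ\phi_*$. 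The main obstacle I anticipate is the bookkeeping in this last step: one must verify that every ingredient in the geometric construction of the shriek map (choices of cocycle representatives, associated $S^1$-central extensions, deformation groupoids, and twisted vector bundle data) is functorially transported by $\phi$, and that the corresponding KK-classes agree on the nose. Once this compatibility is pinned down, the whole argument is a formal consequence of the functoriality theorem stated above.
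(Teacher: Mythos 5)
Your first two parts track the paper's own argument. The well-definedness of $\mu_\alpha$ is exactly the paper's proof of Theorem \ref{twistedassemblymap}: apply the wrong-way functoriality (Theorem \ref{twistfun}) to the factorization $\pi_P=\pi_{P'}\circ g$ and pass to the equivalence relation generated by the elementary moves. Your construction of $\phi_*$ on cycles via the balanced product $\phi(P)=P\times_\gr P_\phi$, the induced Morita equivalence of the action groupoids, and the transport of the class $x$ is also the paper's construction (Theorem \ref{MoritageoK}); the one point you gloss over there, which the paper makes explicit, is that the identification of the twistings $\pi_P^*\alpha+\o_{T^vP}$ and $\pi_{\phi(P)}^*\alpha'+\o_{T^v\phi(P)}$ must be realized by an explicit Morita equivalence of the associated $S^1$-central extensions preserving the $\mathbb{Z}$-grading, so that $\phi(x)$ is canonically defined and not merely defined up to unspecified isomorphism.

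The genuine gap is in your third part, and it leaks backwards into the second. You reduce the commutativity of the square to the claim that ``the KK-element defining $(\pi_P)_!$ corresponds under $\phi_*$ to the one defining $(\pi_{P'})_!$'' and explicitly defer the verification as bookkeeping. But this verification \emph{is} the theorem: $\pi_P!$ is not a single natural KK-class but the composite of a twisted equivariant Thom isomorphism, a Fourier isomorphism, a deformation index $(ev_1)_*\circ(ev_0)_*^{-1}$, and a Morita projection, and each stage has to be matched across the equivalence separately. The paper does this (proof of Theorem \ref{MoritaAS}) by stacking four squares: the Thom square commutes by Proposition \ref{Thomproperties}, property 1, which is itself a statement requiring proof and rests on Le Gall's functoriality of the descent morphism and its compatibility with Kasparov products; the two deformation squares commute because one can exhibit an explicit Morita bi-bundle $\gr_f\times_\gr P_\phi$ between $\gr_f\rtimes\gr$ and $\gr_{\phi(f)}\rtimes\gr'$ that restricts to the given equivalences at $t=0$ and $t=1$, hence intertwines $e_0$ and $e_1$; and the last square is the compatibility of the canonical projections. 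None of these ingredients appears in your outline, and the Morita naturality of the twisted equivariant Thom isomorphism in particular does not follow formally from ``everything is geometric.'' The same compatibility is silently used in your Part 2 when you assert that $P\mapsto\phi(P)$ is functorial enough to give $\phi(g)_!(\phi(x))=\phi(g_!(x))$ for a general equivariant map $g$, so until it is established, the well-definedness of $\phi_*$ on $K^{geo}_*$ is also incomplete.
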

 
\begin{remark}  
The Morita invariance of the assembly map\footnote{The Morita invariance of the geometric assembly map is proven for the untwisted case in \cite{Shim}, but in that paper the author did not discuss  that the assembly map is well defined.} is important in many applications. It justifies in one hand the fact that the construction does not depend on the given cocycle representing the twisting neither on the given associated extension (modulo an explicit induced Morita isomorphism).  more importantly it gives a precise  meaning to the twisted assembly map for differentiable stacks. This last point is essential since in practice one usually changes the groupoid model by a Morita equivalent one (for some examples on Morita equivalences see section \ref{HScat} below).
\end{remark}

 For the case of a proper groupoid $\gr\rightrightarrows M$ with $M/\gr$ compact,   the assembly map is an isomorphism  (Cf. Proposition \ref{propQfinal}). This covers the case of compact orbifold groupoids. 
 For   a connected Lie group $G$ with  a projective representation  $\alpha: G\to PU(H)$,   let $L$ be a maximal compact subgroup of $G$. Then  we have a commutative diagram
\begin{equation}
\xymatrix{
K_{*}^{geo}(G,\alpha)\ar[rd]_-{\mu_\alpha}\ar[rr]^-{\mu_L}_-{\cong}&& K^*(L, i^*\alpha+\o_{T_e(L\setminus G)})\ar[ld]^-{i_!}\\
&K^*(G,\alpha)&
}
\end{equation}
where $i:L\hookrightarrow G$ is the restriction morphism. In the case $\alpha$ and $\o_{T_e(L\setminus G)}$ are trivial, the above diagram gives a meaning to Mackey's observations on unitary representations for Lie groups, at least in the case where the assembly map is an isomorphism. For an almost connected Lie group $G$, this is  known as the Connes-Kasparov conjecture proved in \cite{CEN}.
In the twisted case there should also be a relation between the projective representations of some Lie groups and certain related semi-direct product group's projective representations\footnote{By Thom isomorphism $K^*(L, i^*\alpha+\o_{T_e(L\setminus G)})\cong K^*(T_e(L\setminus G)\rtimes L,i^*\alpha)$}. This will be discussed elsewhere.
 

Next, we discuss the relation of the assembly maps with the associated assembly map for the groupoid extension. This gives a precise meaning to the twisted assembly as the degree one part of a classic assembly map under the $S^1$-action. More explicitely,  given an extension groupoid $R_{\alpha}$ associated to $(\gr,\alpha)$, the $S^1$-action on $R_{\alpha}$ induces a $\mathbb{Z}$-grading  in $C^*(R_{\alpha})$ (Proposition 3.2 in \cite{TXL}). We have
$$K^*(R_\alpha)\cong \bigoplus_{n\in \mathbb{Z}}K^*(\gr,n\alpha).$$
Now, for the Lie groupoid $R_\alpha$ there is a geometric assembly map $\mu_{R_\alpha}$. The following results (See Proposition \ref{gradprop}) relates the assembly map $\mu_{R_\alpha}$  with the assembly map for the twisted Lie  groupoid. 

\begin{proposition}
We have an isomorphism of groups
\begin{equation}
K^{geo}_{*}(R_\alpha)
 \cong \bigoplus_{n\in \mathbb{Z}}K^{geo}_*(\gr,n\alpha)  \end{equation}
and under this isomorphism
$
 \mu_{R_\alpha} = \bigoplus_{n\in \mathbb{Z}}\mu_{n\alpha} . 
$   In particular the geometric twisted assembly map is an isomorphism whenever the geometric assembly map for the corresponding  extension is.
\end{proposition}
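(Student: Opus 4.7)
The plan is to lift the cited analytic $\mathbb{Z}$-grading $K^*(R_\alpha)\cong\bigoplus_n K^*(\gr,n\alpha)$ to the geometric side by decomposing every geometric cycle $(P,\xi)$ for $R_\alpha$ into its $S^1$-weight components, and then to check that the shriek maps, being naturally $S^1$-equivariant, preserve this grading. The last assertion then follows tautologically.

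First I would observe that for any $R_\alpha$-proper co-compact manifold $P$ with oriented submersion momentum map $\pi_P$, the crossed-product groupoid $P\rtimes R_\alpha$ is itself an $S^1$-central extension of $P\rtimes \gr_\Omega$, the $S^1$-action on $P\rtimes R_\alpha$ being obtained by restriction of the $R_\alpha$-action along the central embedding $S^1\hookrightarrow R_\alpha$. Hence the argument of \cite{TXL}, Proposition 3.2 applies verbatim to give a natural $\mathbb{Z}$-grading
\[
K^*(P\rtimes R_\alpha,\o_{T^vP})\;\cong\;\bigoplus_{n\in\mathbb{Z}}K^*(P\rtimes \gr,\,n\pi_P^*\alpha+\o_{T^vP}),
\]
via Morita equivalence of $\gr_\Omega$ with $\gr$. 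Writing $\xi=\sum_n\xi_n$, each $(P,\xi_n)$ is then a geometric cycle for $(\gr,n\alpha)$ in the sense of the definition above, and conversely a cycle for $(\gr,n\alpha)$ supplies the weight-$n$ summand of an $R_\alpha$-cycle on the same $P$. This yields the candidate set-theoretic bijection of generators.

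Next I would verify that the geometric equivalence relation respects the grading. Given an $R_\alpha$-equivariant oriented smooth map $g:P\to P'$, the shriek map $g_!$ is constructed via the associated tangent and deformation-to-the-normal-cone groupoids for $g$, and these deformation groupoids formed over $R_\alpha$ are themselves $S^1$-central extensions of the corresponding deformation groupoids over $\gr_\Omega$. Consequently the KK-element implementing $g_!$ is $S^1$-equivariant and hence preserves the $\mathbb{Z}$-grading: $g_!(\xi)_n=g_!(\xi_n)$ for each $n$. Therefore if $(P,\xi)\sim (P',\xi')$ via $g$ over $R_\alpha$, the identity $\xi'=g_!(\xi)$ decomposes componentwise into $\xi'_n=g_!(\xi_n)$, i.e.\ $(P,\xi_n)\sim(P',\xi'_n)$ over $(\gr,n\alpha)$. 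This establishes the isomorphism $K^{geo}_*(R_\alpha)\cong\bigoplus_{n\in\mathbb{Z}}K^{geo}_*(\gr,n\alpha)$. Since $\mu_{R_\alpha}(P,\xi)=(\pi_P)_!(\xi)$ is itself a special case of a shriek, applying the same equivariance one gets $\mu_{R_\alpha}(P,\xi)=\sum_n (\pi_P)_!(\xi_n)=\sum_n \mu_{n\alpha}(P,\xi_n)$, which is exactly $\mu_{R_\alpha}=\bigoplus_n \mu_{n\alpha}$. The final claim then follows because an isomorphism between direct sums forces each component to be an isomorphism, and the $n=1$ component is $\mu_\alpha$.

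The main obstacle will be the $S^1$-equivariance statement in the second step: one must argue carefully that the deformation-groupoid construction of $f_!$ (from the previous sections) is functorial under pulling back along the central extension $R_\alpha\to \gr_\Omega$, so that both the tangent groupoid and the double deformation used in the functoriality Theorem are $S^1$-central extensions of their $\gr_\Omega$-counterparts. Once that is established, the compatibility of the grading with every piece of the construction -- cycles, equivalences, and assembly -- follows mechanically by taking $S^1$-isotypic components.
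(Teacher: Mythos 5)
Your overall strategy---decompose cycles into $S^1$-isotypic components and check that the shriek maps respect the grading---is the right idea, and the second half of your argument (the $S^1$-equivariance of $g_!$ and of $\mu$, and the deduction of the final claim) is sound and consistent with the paper. But the first step has a genuine gap. You assert that for an arbitrary proper co-compact $R_\alpha$-manifold $P$ the crossed product $P\rtimes R_\alpha$ is an $S^1$-central extension of $P\rtimes \gr_\Omega$, so that Proposition 3.2 of \cite{TXL} applies ``verbatim''. This presupposes that the central circle $S^1\subset R_\alpha$ acts trivially on $P$; otherwise the $R_\alpha$-action does not descend to $\gr_\Omega$ and the groupoid $P\rtimes\gr_\Omega$ is not even defined. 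That hypothesis fails for generic $R_\alpha$-manifolds: for example $R_\alpha$ acting on itself by translation is proper and the central $S^1$ acts freely, so the arrows $(p,\lambda)$ with $\lambda$ central are not loops at $p$ and do not form a central circle of $P\rtimes R_\alpha$ over $P$. In that situation $P\rtimes R_\alpha$ is an $S^1$-central extension of $(P/S^1)\rtimes\gr_\Omega$, not of anything built on $P$ itself, and your ``decomposition in place'' $\xi=\sum_n\xi_n$ with each $(P,\xi_n)$ a cycle over $(\gr,n\alpha)$ breaks down because $P$ is not a $\gr$-manifold.

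This is exactly the point that constitutes the substantive (surjectivity) half of the paper's proof: given an $R_\alpha$-cycle $(Z,y)$ one must first quotient by the (free and proper) central $S^1$-action to get $X=Z/S^1$, and then further quotient to obtain a $\gr$-manifold $P$ over $M$; only then does one identify $Z\rtimes R_\alpha\cong R_{\alpha\circ\pi_P}$ and decompose $K^*(Z\rtimes R_\alpha)\cong\bigoplus_{n}K^*(P\rtimes\gr,n(\alpha\circ\pi_P))$. In the other direction the paper also does not use ``the same $P$'': since $R_\alpha$ has unit space $\sqcup_i\Omega_i$ rather than $M$, a $(\gr,n\alpha)$-cycle $(P,x)$ must be replaced by its pullback $P_\Omega$ along $\sqcup_i\Omega_i\to M$ before it becomes an $R_\alpha$-cycle. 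To repair your argument you would need to insert both of these constructions (and verify that $(Z,y)$ is equivalent in $K^{geo}_*(R_\alpha)$ to a cycle of the form $(P_\Omega,\cdot)$); as written, the claim that the Tu--Xu--Laurent-Gengoux grading applies verbatim to every cycle hides precisely the content of the proposition.
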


{\bf Comparison with the analytic assembly:}

Up to now, we have not supposed our groupoids to be Hausdorff. In the Hausdorff case there is an analytic version of the assembly map that has been widely studied, in particular thanks to  Kasparov's KK-theory. In this case, we have the following comparison result (Cf. Proposition \ref{assemblytopana}):

\begin{proposition} Let $R$ be a Hausdorff groupoid. There exists a homomorphism $\lambda_R: K^{geo}_{*}(R)\to K^{ana}_{*}(R)$ such that, denoting by $\mu_{R}^{ana}$ the analytic assembly map (\cite{Tu3}), the following diagram commutes 
\begin{equation}
\xymatrix{
K^{geo}_{*}(R)\ar[rd]_-{\mu_{R}}\ar[rr]^-{\lambda_R}&&K^{ana}_{*}(R)
\ar[ld]^-{\mu_{R}^{ana}}\\
&K^*(R)&
}
\end{equation}
Moreover,  the Morita invariance of each morphism in the above commutative diagram holds.
\end{proposition}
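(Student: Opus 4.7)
The plan is to define $\lambda_R$ on generators of $K^{geo}_*(R)$ by a Kasparov product with a fiberwise Dirac class, then to check that this descends to the equivalence relation, that both triangles commute, and that everything is compatible with Morita equivalences.

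First I would define $\lambda_R$ on a cycle $(P,\xi)$ as follows. The class $\xi$ lies in $K^*(P\rtimes R,\o_{T^vP})$, with $P$ a proper cocompact $R$-space and $\pi_P:P\to M$ an oriented submersion. Realizing $K^{ana}_*(R)$, following Tu (\cite{Tu3}), as the inductive limit over proper cocompact $R$-spaces $Y$ of the equivariant KK-groups $KK^R_*(C_0(Y),\mathbb{C})$, I would set
\[
\lambda_R(P,\xi)\ :=\ \xi\otimes[\partial^v_{\pi_P}],
\]
the Kasparov product of $\xi$ with the class $[\partial^v_{\pi_P}]$ of the family Dirac operator along the fibers of $\pi_P$. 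The orientation twisting $\o_{T^vP}$ carried by $\xi$ is precisely what makes this product intrinsically defined, via the twisted equivariant Thom isomorphism established in the appendix.

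Next I would verify well-definedness on equivalence classes. If $g:P\to P'$ is an $R$-equivariant oriented map with $\xi'=g_!(\xi)$, then the factorization $\pi_{P'}\circ g=\pi_P$ together with the functoriality of the shriek map (Theorem~\ref{twistfun}) and the compatibility of $g_!$ with Kasparov products by fiberwise Dirac classes forces the images of $(P,\xi)$ and $(P',\xi')$ in the inductive limit to coincide, so that $\lambda_R$ descends to $K^{geo}_*(R)$. The heart of the argument is then the commutativity of the two triangles. The left one is a tautology, being the definition $\mu_R(P,\xi)=\pi_{P!}(\xi)$. For the right triangle I would show that $\mu^{ana}_R(\lambda_R(P,\xi))=\pi_{P!}(\xi)$ in $K^*(R)$. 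The analytic assembly of $\xi\otimes[\partial^v_{\pi_P}]$, computed via Kasparov descent, is the equivariant family index of the Dirac operator along $\pi_P$ twisted by $\xi$; on the geometric side, $\pi_{P!}$ is built from the tangent/adiabatic deformation groupoid of $\pi_P$, which is exactly the deformation used to identify the analytic index with the topological index. Equating the two is the content of an equivariant longitudinal index theorem for submersions of Hausdorff Lie groupoids (Connes--Skandalis, Hilsum--Skandalis, Monthubert--Pierrot), and this identification is the step I expect to be the main technical obstacle.

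For Morita invariance, that of $\mu_R$ is Theorem~\ref{MoritaAS} and that of $\mu^{ana}_R$ is standard (\cite{Tu3}), both being implemented by Kasparov products with the linking bimodule of the Morita equivalence. Since $\lambda_R$ is itself defined through a Kasparov product with a class that is natural under pullback along the Morita bibundle, naturality of the Kasparov product yields compatibility of $\lambda_R$ with the Morita isomorphisms on the two K-homology groups. Combining these invariance statements with the commutativity of the diagram for each of two Morita equivalent groupoids gives the Morita invariance of the whole triangle.
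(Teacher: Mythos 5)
Your route is genuinely different from the paper's, and the difference matters. The paper defines $\lambda_R([P,x])$ by taking the \emph{same} deformation-groupoid element $\pi_P!\in KK_R^*(T^vP,R_0)$ that is used to build the shriek map in Section \ref{pushsection}, and composing it with the class $c_P\in KK_R(Y,T^vP)$ of the classifying map $T^vP\to Y\subset {\bf E}R$; with that definition the right-hand triangle commutes essentially by construction, since applying $\mu_R^{ana}$ to $c_P\otimes_{T^vP}\pi_P!$ amounts to descending the very KK-element whose descent \emph{is} $\pi_{P!}$, so the whole verification reduces to naturality of the descent functor and of the Kasparov product (Le Gall's theorem, as used throughout the paper). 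You instead define $\lambda_R(P,\xi)=\xi\otimes[\partial^v_{\pi_P}]$ with the fiberwise Dirac class. That is the classical Baum--Connes--Higson-style definition, and it is a reasonable one, but it is not the paper's map until you prove it agrees with the deformation-groupoid construction.

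That agreement is exactly where your argument has a gap. Both the commutativity of the right triangle and the well-definedness of your $\lambda_R$ on equivalence classes rest on identifying the Kasparov product with $[\partial^v_{\pi_P}]$ (an analytic family index) with the pushforward $\pi_{P!}$ built from the tangent/adiabatic deformation groupoid. You correctly name this as an equivariant longitudinal index theorem for $R$-equivariant submersions, and you correctly flag it as the main technical obstacle --- but you then leave it as a citation to results (Connes--Skandalis, Monthubert--Pierrot) that are not stated in the generality you need here (arbitrary Hausdorff Lie groupoids acting properly on $P$, with the orientation twisting $\o_{T^vP}$ absorbed via the equivariant twisted Thom isomorphism of the appendix). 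As written, the key step is asserted rather than proved, whereas the paper's choice of $\lambda_R$ makes that step unnecessary. If you want to keep the Dirac-class definition, you must either prove this index theorem or prove directly that $\xi\otimes[\partial^v_{\pi_P}]=c_P\otimes_{T^vP}\pi_P!$ in $KK_R$; otherwise, adopt the paper's definition of $\lambda_R$, after which your remaining arguments (left triangle by definition of $\mu_R$, Morita invariance via Theorem \ref{MoritaAS}, Tu's results, and naturality of Kasparov products along the Morita bibundle) go through as you describe. A minor additional point: in Tu's model the analytic group is $\lim_Y KK_R(C_0(Y),C_0(R_0))$ with coefficients in $C_0(R_0)$, not in $\CC$.
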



In the case of Hausdorff groupoids, assuming $\lambda_{R_\alpha}:K^{geo}_{*}(R_\alpha)\longrightarrow K^{ana}_{*}(R_\alpha)$ is an isomorphism, then  the geometric twisted assembly map for $(\gr,\alpha)$ is an isomorphism whenever the analytic assembly map for $R_\alpha$ is.  An  interesting example of the this  situation is when the groupoid 
$\gr$ satisfies the so called Haagerup property. Indeed, in this case, one can check that for any twisting $\alpha$, the correspondent extension groupoid $R_\alpha$ satisfies as well the Haagerup property. Then by Tu's theorem (\cite{Tu2} theorem 9.3, see also \cite{Tu3} theorem 6.1) the analytic assembly map for $R_\alpha$ is an isomorphism. This was already mentioned in Tu's habilitation \cite{Tu4} page 16.
Some examples of Lie groupoids for which the (reduced, see remark below) analytic assembly map is known to be an isomorphism  or injective  are 
\begin{enumerate}
\item injectivity for bolic groupoids (Tu \cite{Tu1}),
\item isomorphism for groupoids having the Haagerup property (Tu \cite{Tu2}),
\item isomorphism for almost connected Lie groups (Chabert-Echterhoff-Nest \cite{CEN}),
\item isomorphism for hyperbolic groups (Lafforgue \cite{Laff12}).
\end{enumerate}
A very interesting question then is the following one:

{\bf Question:} For which Lie groupoids is the comparison map $\lambda$ an isomorphism?

Let us mention that different models for K-homology (at least in the untwisted case) were assumed by the experts to be isomorphic for many years. It was not until some years ago that a complete proof for some models was
provided (\cite{BHS,BOOSW}). So the above  question  is  far from be trivial and as we stated above a positive answer would have   some interesting applications. 
In this paper we have only discussed two models for twisted K-homology, but, as we indicate in \cite{CaWangCRAS} for foliations, there is also a Baum-Douglas geometric model for twisted Lie groupoids (See \cite{W08} where the second author introduced the case for twisted manifolds). The Baum-Douglas geometric model is easily seen to be isomorphic to the geometric one proposed here and it has the advantage that similar methods as in \cite{BHS,BOOSW} apply for a very large family of Lie groupoids. We will discuss this   in a forthcoming paper.

\begin{remark}[About the use of maximal or reduced $C^*$-algebras]
The reduced $C^*$-algebra is in principle more geometrical. For instance, the twisted $K$-theory can be described in some cases by twisted vector bundles, theorem 5.28 in \cite{TXL}. For some groupoids (amenable, K-amenable, etc...) the reduced and the maximal completions coincide. For example, in the definition of the geometric K-homology group above, one has cycles in $K^{*}(C^{*}_{red}(P\rtimes \gr, \pi_P^*\alpha+\o_{T^vP}))=K^{*}(C^{*}(P\rtimes \gr, \pi_P^*\alpha+\o_{T^vP}))$ since $P\rtimes \gr$ is proper.
 By taking the canonical induced morphism from the K-theory of a maximal $C^*$-algebra to the K-theory of the reduced one, we can define the assembly map with values in the K-theory of the reduced $C^*$-algebra of a twisted groupoid. All the results above concerning the assembly map still hold for the "reduced" assembly map. 
 
  The problem in adapting directly our results to the reduced case is a problem of exactness. In his thesis \cite{Las}, Lassagne studies under which conditions the pushforward maps between foliation groupoids can be performed directly in the reduced $C^*$-algebra level. Another possibility is to adapt to groupoids the recent reformulated Baum-Connes conjecture proposed by Baum-Guenter-Willett in \cite{BGW}, there the authors define a minimal (Morita invariant) crossed product for which on does not have anymore the exactness problems mentioned above. One can certainly define in this context the reformulated twisted Baum-Connes assembly map.
\end{remark}


{\bf Acknowledgements:} We would like to thank the referee for carefully reading our work and for making important remarks on the twisted Thom isomorphism that led us to a net improvement of the paper.
The first author would like to thank the excellent working conditions he had at the Max Planck Institut for Mathematics at Bonn where part of this work was realized. 

\section{Preliminaries on groupoids}

In this section, we review the notion of twistings on Lie groupoids and 
discuss some examples which appear in this paper.
Let us recall what a groupoid is:

\begin{definition}
A $\it{groupoid}$ consists of the following data:
two sets $\gr$ and $\go$, and maps
\begin{itemize}
\item[(1)]  $s,r:\gr \rightarrow \go$ 
called the source map and target map respectively,
\item[(2)]  $m:\gr^{(2)}\rightarrow \gr$ called the product map 
(where $\gr^{(2)}=\{ (\gamma,\eta)\in \gr \times \gr : s(\gamma)=r(\eta)\}$),
\end{itemize}
together with  two additional  maps, $u:\go \rightarrow \gr$ (the unit map) and 
$i:\gr \rightarrow \gr$ (the inverse map),
such that, if we denote $m(\gamma,\eta)=\gamma \cdot \eta$, $u(x)=x$ and 
$i(\gamma)=\gamma^{-1}$, we have 
\begin{itemize}
\item[(i)] $r(\gamma \cdot \eta) =r(\gamma)$ and $s(\gamma \cdot \eta) =s(\eta)$.
\item[(ii)] $\gamma \cdot (\eta \cdot \delta)=(\gamma \cdot \eta )\cdot \delta$, 
$\forall \gamma,\eta,\delta \in \gr$ whenever this makes sense.
\item[(iii)] $\gamma \cdot x = \gamma$ and $x\cdot \eta =\eta$, $\forall
  \gamma,\eta \in \gr$ with $s(\gamma)=x$ and $r(\eta)=x$.
\item[(iv)] $\gamma \cdot \gamma^{-1} =u(r(\gamma))$ and 
$\gamma^{-1} \cdot \gamma =u(s(\gamma))$, $\forall \gamma \in \gr$.
\end{itemize}
For simplicity, we denote a groupoid by $\gr \rightrightarrows \go $. A strict morphism $f$ from
a  groupoid   $\hr \rightrightarrows \ho $  to a groupoid   $\gr \rightrightarrows \go $ is  given
by  maps 
\[
\xymatrix{
\hr \ar@<.5ex>[d]\ar@<-.5ex>[d] \ar[r]^f& \gr \ar@<.5ex>[d]\ar@<-.5ex>[d]\\
\ho\ar[r]_{f_0}&\go
}
\]
which preserve the groupoid structure, i.e.,  $f$ commutes with the source, target, unit, inverse  maps, and respects the groupoid product  in the sense that $f(h_1\cdot h_2) = f (h_1) \cdot f(h_2)$ for any $(h_1, h_2) \in \hr^{(2)}$.

\end{definition}

In  this paper we will only deal with Lie groupoids, that is, 
a groupoid in which $\gr$ and $\go$ are smooth manifolds, and $s,r,m,u$ are smooth maps (with s and r submersions, see \cite{Mac,Pat}). 

\subsection{The tangent groupoid}

In this subsection, we review the notion of Connes' tangent groupoids from deformation to the normal cone point of view.

\subsubsection{Deformation to the normal cone}\label{DCN}

The tangent groupoid is a particular case of a geometric construction that we describe here.

Let $M$ be a $\ci$ manifold and $X\subset M$ be a $\ci$ submanifold. We denote
by $\mathscr{N}_{X}^{M}$ the normal bundle to $X$ in $M$.
We define the following set
\begin{align}
\mathscr{D}_{X}^{M}:= \left( \mathscr{N}_{X}^{M} \times {0} \right) \bigsqcup   \left(M \times \mathbb{R}^* \right). 
\end{align} 
The purpose of this section is to recall how to define a $\ci$-structure in $\mathscr{D}_{X}^{M}$. This is more or less classical, for example
it was extensively used in \cite{HS}.

Let us first consider the case where $M=\mathbb{R}^p\times \mathbb{R}^q$ 
and $X=\mathbb{R}^p \times \{ 0\}$ ( here we
identify  $X$ canonically with $ \mathbb{R}^p$). We denote by
$q=n-p$ and by $\mathscr{D}_{p}^{n}$ for $\mathscr{D}_{\mathbb{R}^p}^{\mathbb{R}^n}$ as above. In this case
we   have that $\mathscr{D}_{p}^{n}=\mathbb{R}^p \times \mathbb{R}^q \times \mathbb{R}$ (as a
set). Consider the 
bijection  $\psi: \mathbb{R}^p \times \mathbb{R}^q \times \mathbb{R} \rightarrow
\mathscr{D}_{p}^{n}$ given by 
\begin{equation}\label{psi}
\psi(x,\xi ,t) = \left\{ 
\begin{array}{cc}
(x,\xi ,0) &\mbox{ if } t=0 \\
(x,t\xi ,t) &\mbox{ if } t\neq0
\end{array}\right.
\end{equation}
whose  inverse is given explicitly by 
$$
\psi^{-1}(x,\xi ,t) = \left\{ 
\begin{array}{cc}
(x,\xi ,0) &\mbox{ if } t=0 \\
(x,\frac{1}{t}\xi ,t) &\mbox{ if } t\neq0
\end{array}\right.
$$
We can consider the $\ci$-structure on $\mathscr{D}_{p}^{n}$
induced by this bijection.

We pass now to the general case. A local chart 
$(\mathscr{U},\phi)$ of $M$ at $x$  is said to be a $X$-slice   if 
\begin{itemize}
\item[1)]  $\mathscr{U}$  is an open neighbourhood of $x$ in $M$ and  $\phi : \mathscr{U}  \rightarrow U \subset \mathbb{R}^p\times \mathbb{R}^q$ is a diffeomorphsim such that $\phi(x) =(0, 0)$. 
\item[2)]  Setting $V =U \cap (\mathbb{R}^p \times \{ 0\})$, then
$\phi^{-1}(V) =   \mathscr{U} \cap X$ , denoted by $\mathscr{V}$.
\end{itemize}
With these notations understood, we have $\mathscr{D}_{V}^{U}\subset \mathscr{D}_{p}^{n}$ as an
open subset.   For $x\in \mathscr{V}$ we have $\phi (x)\in \mathbb{R}^p
\times \{0\}$. If we write 
$\phi(x)=(\phi_1(x),0)$, then 
$$ \phi_1 :\mathscr{V} \rightarrow V \subset \mathbb{R}^p$$ 
is a diffeomorphism.  Define a function 
\begin{equation}\label{phi}
\tilde{\phi}:\mathscr{D}_{\mathscr{V}}^{\mathscr{U}} \rightarrow \mathscr{D}_{V}^{U} 
\end{equation}
by setting 
$\tilde{\phi}(v,\xi ,0)= (\phi_1 (v),d_N\phi_v (\xi ),0)$ and 
$\tilde{\phi}(u,t)= (\phi (u),t)$ 
for $t\neq 0$. Here 
$d_N\phi_v: N_v \rightarrow \mathbb{R}^q$ is the normal component of the
 derivative $d\phi_v$ for $v\in \mathscr{V}$. It is clear that $\tilde{\phi}$ is
 also a  bijection. In particular,  it induces a $C^{\infty}$ structure on $\mathscr{D}_{\mathscr{V}}^{\mathscr{U}}$. 
Now, let us consider an atlas 
$ \{ (\mathscr{U}_{\alpha},\phi_{\alpha}) \}_{\alpha \in \Delta}$ of $M$
 consisting of $X-$slices. Then the collection $ \{ (\mathscr{D}_{\mathscr{V}_{\alpha}}^{\mathscr{U}_{\alpha}},\tilde{\phi}_{\alpha})
  \} _{\alpha \in \Delta }$ is a $\ci$-atlas of
  $\mathscr{D}_{X}^{M}$ (Proposition 3.1 in \cite{Ca4}).

\begin{definition}[Deformation to the normal cone]
Let $X\subset M$ be as above. The set
$\mathscr{D}_{X}^{M}$ equipped with the  $C^{\infty}$ structure
induced by the atlas of  $X$-slices is called
 the deformation to the  normal cone associated  to   the embedding
$X\subset M$. 
\end{definition}


One important feature about the deformation to the normal cone is the functoriality. More explicitly,  let
 $f:(M,X)\rightarrow (M',X')$
be a   $\ci$ map   
$f:M\rightarrow M'$  with $f(X)\subset X'$. Define 
$ \mathscr{D}(f): \mathscr{D}_{X}^{M} \rightarrow \mathscr{D}_{X'}^{M'} $ by the following formulas: \begin{enumerate}
\item[1)] $\mathscr{D}(f) (m ,t)= (f(m),t)$ for $t\neq 0$, 

\item[2)]  $\mathscr{D}(f) (x,\xi ,0)= (f(x),d_Nf_x (\xi),0)$,
where $d_Nf_x$ is by definition the map
\[  (\mathscr{N}_{X}^{M})_x 
\stackrel{d_Nf_x}{\longrightarrow}  (\mathscr{N}_{X'}^{M'})_{f(x)} \]
induced by $ T_xM 
\stackrel{df_x}{\longrightarrow}  T_{f(x)}M'$.
\end{enumerate}
 Then $\mathscr{D}(f):\mathscr{D}_{X}^{M} \rightarrow \mathscr{D}_{X'}^{M'}$ is a $\ci$-map (Proposition 3.4 in \cite{Ca4}). In the language of categories, the deformation to the normal cone  construction defines a functor
\begin{equation}\label{fundnc}
\mathscr{D}: \mathscr{C}_2^{\infty}\longrightarrow \mathscr{C}^{\infty} ,
\end{equation}
where $\mathscr{C}^{\infty}$ is the category of $\ci$-manifolds and $\mathscr{C}^{\infty}_2$ is the category of pairs of $\ci$-manifolds.

\begin{definition}[Tangent groupoid]
Let $\gr \rightrightarrows \go $ be a Lie groupoid. $\it{The\, tangent\,
groupoid}$ associated to $\gr$ is the groupoid that has 
\[
\mathscr{D}_{\go}^{\gr} = \left( \mathscr{N}_{\go}^{\gr} \times \{0\}\right) \bigsqcup  \left( \gr\times \mathbb{R}^*\right)
\]
 as the set of arrows and  $\go \times \mathbb{R}$ as the units, with:
 \begin{enumerate}
\item  $s^T(x,\eta ,0) =(x,0)$ and $r^T(x,\eta ,0) =(x,0)$ at $t=0$.
\item   $s^T(\gamma,t) =(s(\gamma),t)$ and $r^T(\gamma,t)
  =(r(\gamma),t)$ at $t\neq0$.
\item  The product is given by
  $m^T((x,\eta,0),(x,\xi,0))=(x,\eta +\xi ,0)$ and  $m^T((\gamma,t), 
  (\beta ,t))= (m(\gamma,\beta) , t)$ if $t\neq 0 $ and 
if $r(\beta)=s(\gamma)$.
\item The unit map $u^T:\go \rightarrow \gr^T$ is given by
 $u^T(x,0)=(x,0)$ and $u^T(x,t)=(u(x),t)$ for $t\neq 0$.
\end{enumerate}

We denote $\gr^{T}= \mathscr{D}_{\go}^{\gr}$ and $A\gr =\mathscr{N}_{\go}^{\gr}  $ as a vector bundle over $\gr^{(0)}$. Then we have a family of Lie groupoids parametrized by $\mathbb{R}$, which itself is a Lie groupoid
\[
\gr^T = \left( A\gr \times \{0\} \right)   \bigsqcup \left(  \gr\times   \mathbb{R}^*  \right) \rightrightarrows \go\times \mathbb{R}.
\]
\end{definition} 
As a consequence of the functoriality of the deformation to the normal cone,
one can show that the tangent groupoid is in fact a Lie
groupoid compatible with the Lie groupoid structures of $\gr$ and $A\gr$. Here $A\gr  \rightrightarrows \go$ is  considered as a Lie groupoid  defined by  the  vector bundle structure. 
Indeed, it is immediate that if we identify in a
canonical way $\mathscr{D}_{\go}^{\gr^{(2)}}$ with $(\gr^T)^{(2)}$, then 
$$ m^T=\mathscr{D}(m),\, s^T=\mathscr{D}(s), \,  r^T=\mathscr{D}(r),\,  u^T=\mathscr{D}(u)$$
where we are considering the following  smooth maps of pairs:
\[\begin{array}{l}
m:( \gr^{(2)},\go)\rightarrow (\gr,\go ), \nonumber
\\
s,r:(\gr ,\go) \rightarrow (\go,\go),\nonumber 
\\
u:(\go,\go)\rightarrow (\gr,\go ).\nonumber
\end{array}
\]

\subsection{The Hilsum-Skandalis category}\label{HScat}
 
Lie groupoids form a category with  strict  morphisms of groupoids. It is now a well-established fact  in Lie groupoid's theory that the right category to consider is the one in which Morita equivalences correspond precisely to isomorphisms.  We review some basic definitions and properties of generalized morphisms between Lie groupoids, see \cite{TXL} section 2.1, or 
\cite{HS,Mr,MM} for more detailed discussions.

\begin{definition}[Generalized homomorphisms]\label{HSmorphism}   
Let $\gr \rightrightarrows \go$ and  
$\hr \rightrightarrows \ho$ be two Lie groupoids.  A generalized groupoid morphism, also called a Hilsum-Skandalis morphism, from $\hr$ to $\gr$ is given by  principal $\gr$-bundle over $\hr$, that 
is, a right  principal $\gr$-bundle over $\ho$
which is also a left $\hr$-bundle over $\go$ such that the   the right $\gr$-action and the left 
$\hr$-action commute,  formally denoted by
\[
f:  \xymatrix{\hr \ar@{-->}[r] &  \gr}
\]
or by  
\[
\xymatrix{
\hr \ar@<.5ex>[d]\ar@<-.5ex>[d]&P_f \ar@{->>}[ld] \ar[rd]&\gr \ar@<.5ex>[d]\ar@<-.5ex>[d]\\
\ho&&\go.
}
\]
if   we want to emphsize  the bi-bundle $P_f$ involved. 
\end{definition}

Notice that a generalized morphism (or Hilsum-Skandalis morphism),   
$f:  \xymatrix{\hr \ar@{-->}[r] &  \gr}$, is given by one of the three equivalent data:
\begin{enumerate}
\item A  locally trivial  right  principal $\gr$-bundle $P_f$  over  $\hr$  as Definition \ref{HSmorphism}. 
\item A 1-cocycle $f=\{(\Omega_i,f_{ij})\}_{i\in I}$ on $\hr$ with values in $\gr$. Here a  $\gr$-valued 1-cocycle on  $\hr$ with respect to  an indexed open covering $\{\Omega_i\}_{i\in I}$ of $\ho$ is  a collection of smooth maps 
 $$f_{ij}:\hr_{\Omega_j}^{\Omega_i} \to\gr,$$
 satisfying the following cocycle condition:
$\forall \gamma \in \hr_{ij}$ and $\forall \gamma'\in \hr_{jk}$ with $s(\gamma)=r(\gamma')$, we have
\begin{center}
$f_{ij}(\gamma)^{-1}=f_{ji}(\gamma^{-1})$ and $f_{ij}(\gamma)\cdot f_{jk}(\gamma')=f_{ik}(\gamma\cdot \gamma').$
\end{center}
We will denote this data by $f=\{(\Omega_i,f_{ij})\}_{i\in I}$. 

\item A  strict morphism of groupoids 
 
\begin{equation}\nonumber
\xymatrix{
\hr_{\Omega}=\bigsqcup_{i,j}\hr_{\Omega_j}^{\Omega_i} \ar@<.5ex>[d]\ar@<-.5ex>[d]\ar[rr]^-f &&\gr\ar@<.5ex>[d]\ar@<-.5ex>[d]\\
 \bigsqcup_{i}\Omega_{i}\ar[rr]&&\go.
}
\end{equation}
for an open cover  $\Omega= \{\Omega_i\}$ of $\ho$.
 \end{enumerate}

 Associated to  a $\gr$-valued 1-cocycle on  $\hr$, there is a canonical defined  principal $\gr$-bundle over $\hr$.  In fact, any principal $\gr$-bundle over $\hr$ is locally trivial (Cf. \cite{MM}).  

\vspace{2mm}
  
\begin{example}  \begin{enumerate}
\item  (Strict morphisms)
Consider a (strict) morphism of groupoids
\[
\xymatrix{
\hr \ar@<.5ex>[d]\ar@<-.5ex>[d] \ar[r]^f& \gr \ar@<.5ex>[d]\ar@<-.5ex>[d]\\
\ho\ar[r]_{f_0}&\go
}
\]
Using the equivalent definitions 2. or 3. above, it is obviously a generalized morphism by taking $\Omega=\{\ho\}$.
In terms of  the language of principal bundles,  the bi-bundle  is simply given  by $$P_f:=\ho\times_{f_0,t}\gr,$$
with projections $t_f:P_f\to \ho$, projection in the first factor, and 
$s_f:P_f\to \go$, projection using the source map of $\gr$. The actions are the obvious ones, that is,
on the left, $h\cdot (a,g):=(t(h),f(h)\circ g)$ whenever $s(h)=a$ and, on the right, $(a,g)\cdot g':=(a,g\circ g')$ whenever $s(g)=t(g')$.
 \item (Classic principal bundles)
Let $X$ be a manifold and $G$ be a Lie group. By definition a generalized morphism between the unit groupoid $X\rightrightarrows X$ (that is a manifold seen as a Lie groupoid all structural maps are the identity) and the Lie group $G\rightrightarrows \{e\}$ seen as a Lie groupoid is given by a $G$-principal bundle over $X$. 

\end{enumerate}
\end{example}

As the name suggests,  generalized morphism  generalizes the notion of strict morphisms and can be composed. Indeed, if $P$ and $P'$ are generalized morphisms from $\hr$ to $\gr$ and from $\gr$ to $\lr$ respectively, then 
$$P\times_{\gr}P':=P\times_{\go}P'/(p,p')\sim (p\cdot \gamma, \gamma^{-1}\cdot p')$$
is a generalized morphism from $\hr$ to $\lr$.  Consider the category $Grpd_{HS}$ with objects Lie groupoids and morphisms given by isomorphism classes of generalized morphisms. There is a functor
\begin{equation}\label{grpdhs}
Grpd \longrightarrow Grpd_{HS}
\end{equation}
where $Grpd$ is the strict category of groupoids. 

\begin{definition}[Morita equivalent groupoids]
Two groupoids are called Morita equivalent if they are isomorphic in $Grpd_{HS}$. 
\end{definition}

\vspace{2mm}

We list here a few examples of Morita equivalence groupoids which will be used in this paper. 

\begin{example}[Pullback groupoid]
Let $\gr\rightrightarrows \go$ be a Lie groupoid and let $\phi:M\to \go$ be a map such that $t\circ pr_2:M\times_{\go}\gr\to \go$ is a submersion (for instance if $\phi$ is a submersion), then the pullback groupoid $\phi^*\gr:=M\times_{\go}\gr\times_{\go} M\rightrightarrows M$ is Morita equivalent to $\gr$, the strict morphism $\phi^*\gr\to \gr$ being a generalized isomorphism. For more details on this example the reader can see \cite{MM} examples 5.10(4).
\end{example}

\begin{example}[The basic example: the \v{C}ech groupoid]
Given a Lie groupoid $\hr\rightrightarrows\ho$ and an open covering $\{\Omega_i\}_i$ of $\ho$, the canonical strict morphism of groupoids $\hr_{\Omega}\longrightarrow \hr$ is a Morita equivalence. It corresponds to the pullback groupoid by the canonical submersion $\sqcup_i\Omega_i\to \ho$.
\end{example}

\begin{example}[Foliations $\sim$ \'etale groupoids]
 In this paper, one main example to have in mind will be the holonomy groupoid associated to a regular foliation.  Let $M$ be a manifold of dimension $n$. Let $F$ be a subvector bundle of the tangent bundle $TM$.
We say that $F$ is integrable if  
$\ci(F):=\{ X\in \ci(M,TM): \forall x\in M, X_x\in F_x\}$ is a Lie subalgebra of $\ci(M,TM)$. This induces a partition  of $M$ in embedded submanifolds (the leaves of the foliation), given by the solution of integrating $F$. 

The holonomy groupoid of $(M,F)$ is a Lie groupoid  
$$\gr_{M } \rightrightarrows M$$ with Lie algebroid $A\gr=F$ and minimal in the following sense: 
any Lie groupoid integrating the foliation, that is having $F$ as Lie algebroid,  contains an open subgroupoid which maps onto the holonomy groupoid by a smooth morphism of Lie groupoids. 
The holonomy groupoid was constructed by Ehresmann \cite{Ehr} and Winkelnkemper \cite{Win} (see also  \cite{Candel}, \cite{God}, \cite{Pat}).
\end{example}
 
 \subsection{Twistings on  Lie groupoids}
 
 In this paper,  we are only going to consider $PU(H)$-twistings on Lie groupoids 
where $H$ is an infinite dimensional, complex and separable
Hilbert space, and $PU(H)$ is the projective unitary group $PU(H)$  with the topology induced by the
norm topology on the unitary group  $U(H)$. 

\begin{definition}\label{twistedgroupoid}
A  twisting $\alpha$  on a   Lie  groupoid $\gr \rightrightarrows \go$  is given by  a generalized morphism 
\[ \xymatrix{
\alpha: \gr \ar@{-->}[r]  & PU(H).}
\]
Here $PU(H)$ is viewed  as a Lie groupoid with the unit space $\{e\}$. Two twistings 
$\alpha$ and $\alpha'$ are called equivalent if they are  equivalent as generalized morphisms.
\end{definition}
 
 So a twisting on a Lie groupoid $\gr$ is  a locally trivial  right  principal $PU(H)$-bundle $P_{\alpha}$ over $\gr$
hence,  is given by
 a $PU(H)$-valued 1-cocycle on $\gr$
\begin{equation}\label{galphaOmega}
g_{ij}:   \gr_{\Omega_j}^{\Omega^i} \longrightarrow PU(H)
\end{equation}
for an open cover $\Omega= \{\Omega_i\}$ of $\go$. That is,  a twisting  datum $\alpha$ on a Lie  groupoid $ \gr $ is given by a strict morphism of groupoids 
\begin{equation}\label{galpha}
\xymatrix{
\gr_{\Omega}=\bigsqcup_{i,j}\gr_{\Omega_j}^{\Omega_i} \ar@<.5ex>[d]\ar@<-.5ex>[d]\ar[rr]^-f &&PU(H) \ar@<.5ex>[d]\ar@<-.5ex>[d]\\
 \bigsqcup_{i}\Omega_{i}\ar[rr]&&\{e\}.
}
\end{equation}
for an open cover  $\Omega= \{\Omega_i\}$ of $\go$.

 \begin{remark}
 The definition of generalized morphisms given in the last subsection was for two Lie groupoids. The group $PU(H)$ it is not precisely a Lie group but it makes perfectly sense to speak of generalized morphisms from Lie groupoids to this infinite dimensional   groupoid following exactly the same definition,  see (\ref{galphaOmega}) and (\ref{galpha}).
 \end{remark}

\begin{example} \label{example} For a list of  various twistings on some   standard groupoids see example 1.8 in \cite{CaWangAdv}. Here we will only  a few  basic examples used in this paper.
  
 \begin{enumerate}
\item (Twisting on manifolds)  Let $X$ be a $\ci$-manifold. We can consider the  Lie groupoid 
 $X\rightrightarrows X$  where every morphism is the identity over $X$.  A twisting on $X$ is
given by a locally trivial principal $PU(H)$-bundle over $X$, or equivalently,  
 a twisting on $X$ is defined by a strict homomorphism 
\[
\xymatrix{
X_{\Omega}=\bigsqcup_{i,j} \Omega_{i, j} \ar@<.5ex>[d]\ar@<-.5ex>[d]\ar[rr]^-f &&PU(H) \ar@<.5ex>[d]\ar@<-.5ex>[d]\\
 \bigsqcup_{i}\Omega_{i}\ar[rr]&&\{e\}.
}
 \]
with respect to an open cover $\{\Omega_i\}$ of $X$, where $\Omega_{ij} =\Omega_i \cap\Omega_j$. 
Therefore, the restriction of a twisting $\alpha$ on a  Lie groupoid $\gr \rightrightarrows \go$
to its unit $\go$ defines a twisting  $\alpha_0$ on the manifold $\go$.

\item\label{obundle} (Orientation twisting) Let $X$ be a  manifold with an oriented real vector bundle $E$. The  bundle $E \to X$ defines
a natural generalized morphism 
\[
\xymatrix{
X\ar@{-->}[r] & SO(n).}
\]
Note that the fundamental spinor  representation of   $Spin^c(n)$ gives rise to a commutative
diagram of Lie group homomorphisms
\[
\xymatrix{
Spin^c(n) \ar[d]   \ar[r] & U(\CC^{2^{[n/2]}}) \ar[d] \\
SO(n) \ar[r] & PU(\CC^{2^{[n/2]}}).}
\]
With a choice of inclusion $\CC^{2^{[n/2]}}$ into a Hilbert space $H$, we have a canonical
twisting, called the orientation twisting, denoted by
\begin{equation}\label{otwistingX}
\xymatrix{
\o_{E}:  X\ar@{-->}[r] & PU(H).}
\end{equation}
If now $\gr\rightrightarrows X$ is a Lie groupoid and $E$ is an oriented $\gr$-vector bundle over $X$, we have in the same way an orientation twisting
\begin{equation}\label{otwisting}
\xymatrix{
\o_{E}:  \gr\ar@{-->}[r] &SO(n)\ar[r]& PU(H)}
\end{equation}
in the case where $E$ admits an $\gr$-invariant metric. In particular when $\gr$ acts properly on $P$ and on $E$, \cite{PPT} proposition 3.14 and \cite{HF} theorem 4.3.4. 


\item (Pull-back twisting) Given a twisting $\alpha$ on $ \gr$ and  for any generalized 
homomorphism $\phi: \hr \to \gr$, there is a pull-back twisting 
\[\xymatrix{
\phi^*\alpha:  \hr  \ar@{-->}[r]  & PU(H)}
\]
defined by the composition of $\phi$ and $\alpha$.  In particular, 
for a continuous map $\phi: X\to Y$, a twisting $\alpha$ on $Y$ gives a pull-back twisting 
$\phi^*\alpha$ on $X$. The principal $PU(H)$-bundle over $X$ defines by $\phi^*\alpha$ is
the pull-back of the  principal $PU(H)$-bundle on $Y$ associated to $\alpha$.

\item (Twisting on fiber product groupoid)  Let $N\stackrel{p}{\rightarrow} M$ be a submersion. We consider the fiber product $N\times_M N:=\{ (n,n')\in N\times N :p(n)=p(n') \}$,which is a manifold because $p$ is a submersion. We can then take the groupoid 
$$N\times_M N\rightrightarrows N$$ which is  a subgroupoid of the pair groupoid 
$N\times N \rightrightarrows N$.  Note that this groupoid is in fact Morita equivalent to the groupoid $ M \rightrightarrows M$.  A twisting on  
 $N\times_M N\rightrightarrows N$ is  given by
a pull-back twisting from a   twisting on  $M$.  

\item (Twisting on the space of leaves of a foliation)
Let $(M,F)$ be a regular foliation with holonomy groupoid $\gr_{M }$. A twisting on the space of leaves  is by definition a twisting on the holonomy groupoid $\gr_{M }$. We will often use the notation 
\[\xymatrix{
M/F \ar@{-->}[r] & PU(H)}
\]
for the corresponding  generalized morphism.

Notice that by definition a twisting on the spaces of leaves is a twisting on the base $M$ which admits a compatible action of the holonomy groupoid. It is however not enough to have a twisting on base which is leafwisely constant, see for instance remark 1.4 (c) in \cite{HS}.

\end{enumerate}
\end{example}

   A twisting on a Lie groupoid $\gr \rightrightarrows M$ gives rise to an
  $U(1)$-central extension over the Morita equivalent groupoid $\gr_{\Omega}$ by
  pull-back the $U(1)$-central extension of $PU(H)$
  \[
  1\to U(1) \to U(H)  \to PU(H) \to 1.
  \]
  We will not call an $U(1)$-central extension of a Morita equivalent groupoid of 
  $ \gr$ a twisting on $\gr$ as in \cite{TXL}.  This is due to the fact that the associated principal  $PU(H)$-bundle might depend on the choice of Morita equivalence bibundles,  even though
  the isomorphism class of principal  $PU(H)$-bundle does not  depend on the choice of Morita equivalence bibundles.  It is important in applications to remember the  $PU(H)$-bundle 
  arising from a twisting, not just its isomorphism class.

Denote by $Tw(\gr)$ the set of  equivalence classes of  twistings on $\gr$.   There is a canonical abelian group structure on $Tw(\gr)$ as follows.    Fix an isomorphism $H\otimes H \to H$,  we have a group homomorphism
 \[
 m: PU(H) \times PU(H) \longrightarrow PU(H\otimes H)  \cong PU(H). 
 \]
 Then given two twistings $\alpha$ and $\beta$ on $\gr $, we can define 
 \begin{equation}\label{sumcocycles}
 \xymatrix{
 \alpha +\beta :  \gr   \ar@{-->}[r]^{(\alpha, \beta)\qquad } &  PU(H)   \times PU(H)  \ar[r]^{\qquad m}  & PU(H).}
\end{equation}
In terms of  $U(1)$-central extension   over the Morita equivalent groupoid $\gr_{\Omega}$, we can choose a common open 
cover $\Omega$ of $\go$ such that $\alpha$ and $\beta$ define $U(1)$-central extensions
\[
S^1 \longrightarrow   R_\beta  \longrightarrow  \gr_{\Omega} \qquad \text{and} \qquad 
S^1 \longrightarrow   R_\beta \longrightarrow  \gr_{\Omega} 
\]
respectively.   Then $\alpha + \beta$  corresponds to the tensor product of the two extensions.  See \cite{TXL} for more discussions of
twistings using the language of $U(1)$-central extensions.  

\begin{remark}  Let $P_\alpha$ be the principal $PU(H)$-bundle over $\gr$ defined a twisting $\alpha$, and $K(H)$ be the elementary $C^*$-algebra of the compact operators on $H$. There is an associated bundle of  elementary $C^*$-algebras over $\gr$
defined by
\[
A_\alpha = P_\alpha\times_{Ad} K(H) \longrightarrow \go
\]
where $Ad$ denotes the adjoint action of $PU(H)$ on $K(H)$. The bundle  $ A_\alpha\to \go$ 
satisfies  FellÕs condition and continuous actions  of  $\gr$ in the sense of \cite{KMRW}, where the Brauer group  $Br(\gr)$ of $\gr$ is defined to be the group of Morita equivalence classes of  elementary $C^*$-algebras over $\gr$.    Then the addition structure on $Tw(\gr)$ corresponds to the tensor product of bundles of  elementary $C^*$-algebras over $\gr$.   Therefore, there is a canonical isomorphism between  $Tw(\gr)$ and the Brauer group $Br(\gr)$. 
\end{remark}

\section{Twisted deformation indices}

\subsection{Twisted groupoid's $C^*$-algebras}
Let $(\gr,\alpha)$ be a twisted groupoid. With respect to a covering  $\Omega = \{\Omega_i\}$ of $\go$, the twisting $\alpha$ is given by a strict morphism of groupoids 
$$ \alpha: \gr_{\Omega}   \longrightarrow PU(H),$$
where $\gr_{\Omega}$ is the covering groupoid associated to $\Omega$. 
Consider the central extension of groups
$$S^1 \longrightarrow U(H) \longrightarrow PU(H),$$
 we can pull it back to get a $S^1$-central extension of Lie groupoid $R_{\alpha}$  over $\gr_{\Omega}$ 
\begin{equation}
\xymatrix{
  S^1\ar[d]\ar[r]& S^1\ar[d]\\
R_{\alpha}\ar[d]\ar[r]&U(H)\ar[d]\\
\gr_{\Omega}\ar[r]_-{\alpha}&PU(H)\\
}
\end{equation}
In particular, $R_{\alpha}\rightrightarrows \bigsqcup_i\Omega_i$ is a Lie groupoid and $R_{\alpha}\longrightarrow \gr_{\Omega}$ is a $S^1$-principal bundle.

We recall the definition of the convolution  algebra and the $C^*$-algebra of a twisted Lie groupoid $(\gr, \a)$ \cite{Ren87,TXL}:

\begin{definition}
Let $R_{\alpha}$ be the $S^1$-central extension of groupoids associated to a twisting $\alpha$. The convolution algebra of $(\gr, \a)$ is by definition the following sub-algebra of 
$C_{c}^{\infty}(R_{\alpha})$: 
\begin{equation}
C_{c}^{\infty}(\gr, \a)=\{f\in C_{c}^{\infty}(R_{\alpha}): f(\tilde{\gamma} \cdot \lambda)=\lambda^{-1} f(\tilde{\gamma}), \forall \tilde{\gamma}\in R_{\alpha },\forall \lambda \in S^1\}.
\end{equation}
The maximal(reduced resp.) $C^*$-algebra of $(\gr, \alpha)$, denoted by $C^*(\gr,  \alpha)$ ($C^*_r(\gr, \alpha)$ resp.),  is the completion of 
$C_{c}^{\infty}(\gr, \a)$ in $C^*(R_{\alpha})$ ($C_r^*(R_{\alpha})$ resp.).
\end{definition}

Let $L_{\alpha}:=R_{\alpha}\times_{S^1}\mathbb{C}$ be the  complex line bundle  over $\gr_\Omega$ which can be considered as a Fell bundle (using the groupoid structure of $R_{\alpha}$) over $\gr_{\Omega}$. 
Then the algebra of  compactly supported smooth sections of this Fell bundle, denoted by  $C_{c}^{\infty}(\gr_{\Omega },L_{\alpha})$, is isomorphic to $C_{c}^{\infty}(\gr, \a)$. Therefore as  $C^*$-algebras, 
\[
C^*(\gr_{\Omega}, L_{\alpha})\cong C^*(\gr, \a), 
\]
 see (23) in \cite{TXL} for an explicit isomorphism.

\begin{remark}\label{decomposition} (\cite{TXL})  
Given the extension $R_{\alpha}$ as above, the $S^1$-action on $R_{\alpha}$ induces a $\mathbb{Z}$-grading in $C^*(R_{\alpha})$ (Proposition 3.2, ref.cit.). More precisely,  we have
\begin{equation}\label{gradext}
C^*(R_{\alpha})\cong \bigoplus_{n\in \mathbb{Z}} C^*(\gr, n  \alpha )
\end{equation}
where $C^*(\gr, n \alpha )$ is the maximal $C^*$-algebra of the twisted groupoid $(\gr,  n \a )$ 
corresponding to the  Fell bundle
$$ L_{\alpha}^n=L_{\alpha}^{\otimes n} \longrightarrow \gr_{\Omega },$$
for all $n\neq 0$, and $C^*(\gr, \alpha^0)=C^*(\gr_{\Omega})$ by convention. Similar results hold for the reduced $C^*$-algebra. 
\end{remark}

\begin{definition}\label{twistedkth} 
Following \cite{TXL}, we define the twisted K-theory of the twisted groupoid $(\gr,\alpha)$ by
\begin{equation}
K^i(\gr,\alpha):=K_{-i}(C^*(\gr, \a)).
\end{equation}
In particular if $\alpha$ is trivial we will be using the notation (unless specified otherwise) $K^i(\gr)$ for the respective $K$-theory group of the maximal groupoid $C^*$-algebra.
\end{definition}

By the next lemma, the group $K^i(\gr,\alpha)$ is well defined, up to a canonical isomorphism coming from the respective Morita equivalences.

\begin{lemma}
Let $\gr$ be a Lie groupoid. Let $\alpha_1,\alpha_2:\gr---> PU(H)$ be two twistings on $\gr$. Suppose we have a given isomorphism $\eta: P_{\alpha_1}\cong P_{\alpha_2}$ between the principal bundles associated to $\alpha_1$ and $\alpha_2$. We have an induced isomorphism between the respective twisted K-theory groups:
\begin{equation}
\xymatrix{
K^*(\gr,\alpha_1)\ar[r]^-{\eta_*}_-{\cong}&K^*(\gr,\alpha_2)
}
\end{equation}
\end{lemma}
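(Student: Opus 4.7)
The plan is to reduce the statement to an isomorphism of $S^1$-central extensions (and hence of associated Fell line bundles and of twisted convolution algebras), obtained by lifting $\eta$ along the universal $S^1$-extension $S^1\to U(H)\to PU(H)$.

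First, I would choose a common open cover $\Omega=\{\Omega_i\}$ of $\go$ (refining, if necessary, the two given covers) over which both twistings are represented as strict morphisms $\alpha_k\colon \gr_{\Omega}\to PU(H)$, $k=1,2$. The isomorphism $\eta\colon P_{\alpha_1}\cong P_{\alpha_2}$ of principal $PU(H)$-bundles over $\gr$ then translates into the existence of smooth maps $\eta_i\colon \Omega_i\to PU(H)$ (after further refining $\Omega$ so that $\eta$ admits local trivialisations) such that, writing $\alpha_k=\{g^{(k)}_{ij}\}$ with $g^{(k)}_{ij}\colon \gr^{\Omega_i}_{\Omega_j}\to PU(H)$, one has the coboundary relation
\begin{equation}
g^{(2)}_{ij}(\gamma)=\eta_i(r(\gamma))\cdot g^{(1)}_{ij}(\gamma)\cdot \eta_j(s(\gamma))^{-1}.
\end{equation}

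Next, I would use the universal $S^1$-central extension to produce $S^1$-central extensions $R_{\alpha_k}\to \gr_{\Omega}$ as in the construction recalled just before the lemma. Refining $\Omega$ once more so that each $\eta_i$ admits a smooth lift $\tilde\eta_i\colon \Omega_i\to U(H)$ (possible since $U(H)\to PU(H)$ is a locally trivial $S^1$-bundle), one checks that the induced $U(H)$-valued cochains $\tilde g^{(k)}_{ij}$ satisfy
\begin{equation}
\tilde g^{(2)}_{ij}(\gamma)=\tilde\eta_i(r(\gamma))\cdot \tilde g^{(1)}_{ij}(\gamma)\cdot \tilde\eta_j(s(\gamma))^{-1}\cdot c_{ij}(\gamma)
\end{equation}
for some $S^1$-valued cochain $c_{ij}$ which, by centrality of $S^1$ in $U(H)$ and by the cocycle condition for both $\tilde g^{(k)}$, is itself a $2$-coboundary on $\gr_{\Omega}$. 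Consequently the $\tilde\eta_i$ (after a harmless correction) assemble into a strict isomorphism $\tilde\eta\colon R_{\alpha_1}\xrightarrow{\ \cong\ } R_{\alpha_2}$ of $S^1$-central extensions of $\gr_{\Omega}$. This isomorphism descends to an isomorphism of the Fell line bundles $L_{\alpha_1}\cong L_{\alpha_2}$ over $\gr_{\Omega}$, and hence induces an isomorphism of the twisted convolution algebras $C^\infty_c(\gr_{\Omega},L_{\alpha_1})\cong C^\infty_c(\gr_{\Omega},L_{\alpha_2})$, which extends by continuity to an isomorphism $C^*(\gr,\alpha_1)\cong C^*(\gr,\alpha_2)$. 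Taking $K$-theory produces the desired isomorphism $\eta_*$.

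Finally, I would remark that the Morita invariance of $K^*(\gr,\alpha)$ under the covering morphism $\gr_{\Omega}\to \gr$ ensures that the construction does not depend on the refinement of $\Omega$, and that different lifts $\tilde\eta_i$ differ by $S^1$-valued functions, producing the same isomorphism of $S^1$-central extensions up to $S^1$-gauge, hence the same isomorphism on $K$-theory. The main obstacle is the fact that $\eta$ itself does not lift along $U(H)\to PU(H)$ globally; this is circumvented by working locally on a sufficiently fine $\Omega$ and by observing that the resulting $S^1$-valued discrepancy is a coboundary, which is exactly the statement that the pullbacks of the universal extension along equivalent generalised morphisms $\alpha_1, \alpha_2$ are isomorphic as $S^1$-central extensions.
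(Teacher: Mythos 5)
Your overall plan is sound and ends up close in spirit to the paper's argument: both reduce the statement to the fact that $\eta$, read at the level of cocycles over a common refinement, induces an equivalence of the associated $S^1$-central extensions, and then pass to the Fell line bundles, the twisted convolution algebras and $K$-theory. The difference is in how that equivalence of extensions is produced: the paper encodes $(\alpha_1,\alpha_2,\eta)$ as a single \emph{common cocycle extension} $\alpha$ on a refined cover, pulls back the universal extension to get $R_{\alpha_1}\sim R_\alpha\sim R_{\alpha_2}$ as Morita equivalences of extensions, and then invokes Morita invariance of twisted $K$-theory (citing Renault, Muhly et al., Tu--Xu--Laurent-Gengoux); you instead aim for a \emph{strict} isomorphism $R_{\alpha_1}\cong R_{\alpha_2}$ covering the identity of $\gr_\Omega$, which is more elementary and avoids appealing to Morita invariance at this stage.

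However, the step where you assemble the $\tilde\eta_i$ into that strict isomorphism is not correct as written. The discrepancy $c_{ij}(\gamma)$ is a function on arrows, hence a $1$-cochain, and there is no reason for it to be a coboundary: the lifts $\tilde g^{(k)}_{ij}$ of the $PU(H)$-cocycles to $U(H)$ do \emph{not} satisfy the cocycle condition in general (their failure to do so is precisely the $S^1$-valued $2$-cocycle $\omega^{(k)}$ classifying $R_{\alpha_k}$), and even when they do, $c$ is only forced to be a $1$-cocycle, which can fail to be a coboundary (e.g.\ $\gr=\ZZ$, both twistings trivial, $\tilde g^{(1)}\equiv 1$ and $\tilde g^{(2)}$ a nontrivial character). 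Consequently no ``harmless correction'' of the $\tilde\eta_i$ can absorb $c$, since such corrections only change the map by coboundaries. What is actually true, and what you need, is the relation $\delta c=\omega^{(2)}(\omega^{(1)})^{-1}$, which says that $c$ together with conjugation by $\tilde\eta$ implements an isomorphism of the two extensions in the cocycle picture. The cleanest repair is to drop the cochains $\tilde g^{(k)}$ and $c$ entirely and use the paper's fibre-product description $R_{\alpha_k}=\gr_\Omega\times_{PU(H),\alpha_k}U(H)$: the map $(\gamma,u)\mapsto\bigl(\gamma,\ \tilde\eta_i(r(\gamma))\,u\,\tilde\eta_j(s(\gamma))^{-1}\bigr)$ is then directly seen to be a well-defined, $S^1$-equivariant, strict groupoid isomorphism $R_{\alpha_1}\to R_{\alpha_2}$ (the two lifts at a composable pair cancel because $s(\gamma)=r(\gamma')$), and the remainder of your argument — passage to Fell bundles, convolution algebras, $K$-theory, and independence of the choice of lifts up to a coboundary gauge, which acts trivially on $K$-theory — goes through as you describe.
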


\begin{proof}
The fact that the K-theory groups are isomorphic follows from \cite{Ren87} or \cite{Muh99} theorem 11 (or proposition 3.3 in \cite{TXL}). We want here to emphasize how $\eta$ induces such an explicit isomorphism. Indeed, the isomorphism $\eta$ between $P_{\alpha_1}$ and $P_{\alpha_2}$ is equivalent to an equivalence between cocycles $\gr_{\Omega_1}\stackrel{\alpha_1}{\longrightarrow} PU(H)$ and $\gr_{\Omega_2}\stackrel{\alpha_2}{\longrightarrow} PU(H)$ representing respectively such principal bundles. Thus giving 
$\eta$ is equivalent to give a common refinement $\Omega$ of $\Omega_1$ and $\Omega_2$ together with a common cocycle extension, {\it i.e.}, a cocycle $\gr_\Omega\stackrel{\alpha}{\longrightarrow} PU(H)$ with $\alpha|_{\Omega_i}=\alpha_i$, $i=1,2$. Then, by taking the respective $S^1$-central extensions, we have Morita equivalences of extensions
$$
\xymatrix{
R_{\alpha_1}\ar[r]^-\sim &R_\alpha &R_{\alpha_2}\ar[l]_-\sim
}
$$
induced by pullback from the Morita equivalences
$$
\xymatrix{
\gr_{\Omega_1}\ar[r]^-\sim &\gr_\Omega &\gr_{\Omega_2}\ar[l]_-\sim.
}
$$
Hence, $\eta$ induce an explicit Morita equivalence of $S^1$-central extensions between $R_{\alpha_1}$ and $R_{\alpha_2}$ giving then an explicit isomorphism between the respective $K-$theory groups.

\end{proof}

We will also need to understand the compatibility of twisted 
$K$-theory with Morita equivalence, more explicitly:

\begin{lemma}\label{etaMorita}
Let $\gr$ and $\gr'$ be two Morita equivalent groupoids. Let us denote by $\gr\stackrel{\phi}{--->}\gr'$ the generalized isomorphism. Consider  two twisting $\alpha'_1,\alpha'_2:\gr'---> PU(H)$ on $\gr'$ and denote by $\alpha_i:=\alpha'_i\circ \phi$ the induced twistings on $\gr$. Suppose we have a given isomorphism $\eta: P_{\alpha_1}\cong P_{\alpha_2}$ between the principal bundles associated to $\alpha_1$ and $\alpha_2$. We have the following commutative diagram of K-theory group isomorphisms:
\begin{equation}
\xymatrix{
K^*(\gr,\alpha_1)\ar[d]_-{\eta_*}^-{\cong}\ar[r]^-{\phi_*}_-{\cong}&K^*(\gr',\alpha_1')\ar[d]^-{\phi(\eta)_*}_-{\cong}\\
K^*(\gr,\alpha_2)\ar[r]_-{\phi_*}^-{\cong}&K^*(\gr',\alpha_2')
}
\end{equation}
\end{lemma}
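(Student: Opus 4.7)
The plan is to reduce the diagram to a statement about $S^1$-central extensions and their Morita equivalences, exploiting the explicit descriptions of the isomorphisms that enter the four arrows. The horizontal arrows $\phi_*$ are the Morita invariance isomorphisms of twisted $K$-theory: since $\phi:\gr\dashrightarrow \gr'$ is a generalized isomorphism, for any twisting $\alpha'$ on $\gr'$ the $S^1$-central extensions $R_{\alpha'\circ\phi}$ and $R_{\alpha'}$ (over appropriate \v{C}ech refinements) are themselves Morita equivalent, inducing the isomorphism $K^*(C^*(\gr,\alpha'\circ\phi))\cong K^*(C^*(\gr',\alpha'))$ via strong Morita equivalence of $C^*$-algebras. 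The vertical arrows $\eta_*$ and $\phi(\eta)_*$ are the ones constructed in the previous lemma, arising from a common cocycle extension on a common refinement of the two trivializing covers.

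First, I would make the cocycle data explicit on both sides. By the previous lemma applied on $\gr'$, the isomorphism $\phi(\eta):P_{\alpha'_1}\cong P_{\alpha'_2}$ is equivalent to a common refinement $\Omega'$ of the covers $\Omega'_1,\Omega'_2$ of $\gr'^{(0)}$ together with a cocycle $\alpha':\gr'_{\Omega'}\to PU(H)$ whose restrictions recover $\alpha'_1$ and $\alpha'_2$. Pulling back along $\phi$ (which amounts to choosing a cover $\Omega$ of $\go$ and a compatible strict model $\gr_\Omega\to \gr'_{\Omega'}$ of $\phi$ in the Hilsum--Skandalis category) yields a cocycle $\alpha=\alpha'\circ\phi$ on $\gr_\Omega$ which restricts to $\alpha_1$ and $\alpha_2$. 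This is precisely the datum describing $\eta:P_{\alpha_1}\cong P_{\alpha_2}$ via the previous lemma, so the vertical arrows are obtained from this same common cocycle on each side.

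Second, taking pullbacks of the canonical extension $S^1\to U(H)\to PU(H)$ along $\alpha,\alpha_i,\alpha',\alpha'_i$ yields a commutative diagram of $S^1$-central extensions
\[
\xymatrix{
R_{\alpha_1}\ar[d]\ar[r]^-{\sim} & R_\alpha\ar[d]& R_{\alpha_2}\ar[l]_-{\sim}\ar[d]\\
R_{\alpha'_1}\ar[r]^-{\sim} & R_{\alpha'}& R_{\alpha'_2}\ar[l]_-{\sim}
}
\]
in which every horizontal arrow is a Morita equivalence induced by refinement and every vertical arrow is a Morita equivalence of extensions induced by $\phi$. Applying $K_{-*}(C^*(-))$ turns each square into a commutative square of isomorphisms: the horizontals become the $\eta_*$ and $\phi(\eta)_*$ maps, while the verticals become the two instances of $\phi_*$. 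Gluing the two resulting squares along the central column gives the commutative square claimed in the statement.

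The main subtlety is purely bookkeeping: one must choose all the \v{C}ech covers compatibly so that the same common refinement witnesses both $\eta$ and $\phi(\eta)$, and check that the $C^*$-algebraic Morita equivalences induced by refinement and by $\phi$ commute as bimodules (not merely at the level of isomorphism classes of $K$-theory). Since each of the four Morita equivalences is induced by an explicit bibundle of $S^1$-equivariant principal bundles, the commutativity of the square of bibundles (up to canonical $S^1$-equivariant isomorphism) is what the second lemma genuinely needs over the first one, and it is this strict compatibility, rather than any deep $K$-theoretic fact, where I expect the care to be required.
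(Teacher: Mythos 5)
Your proposal is correct and follows essentially the same route as the paper: both reduce the square to a commutative diagram of Morita equivalences of $S^1$-central extensions, with the common cocycle extensions $R_\alpha$ and $R_{\alpha'}$ in the middle related by the $\phi$-induced equivalence $\tilde\phi$, and then apply $K$-theory. The only cosmetic difference is orientation --- you start from $\phi(\eta)$ on $\gr'$ and pull back along $\phi$, whereas the paper starts from the common cocycle $\alpha$ witnessing $\eta$ on $\gr$ and takes $\alpha'$ with $\alpha'\circ\phi=\alpha$ --- which is immaterial since $\phi$ is a generalized isomorphism.
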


\begin{proof}
The generalized isomorphism $\phi$ induces a generalized isomorphism
\[
\gr_\Omega \stackrel{\overline{\phi}}{--->} \gr_\Omega'
\]
as a composition of  generalized isomorphisms  for any given open covers $\Omega$ and 
$\Omega'$. Now, if we consider two cocycles $\gr_{\Omega}\stackrel{\alpha}{\longrightarrow} PU(H)$ and $\gr'_{\Omega'}\stackrel{\alpha'}{\longrightarrow} PU(H)$ representing two principal bundles $P_\alpha$ and $P_{\alpha'}$ with 
$P_\phi\times_{\gr'}P_\alpha'\cong P_\alpha$, we have by definition that $\alpha'\circ \overline{\phi}=\alpha$ and thus we have an induced generalized isomorphism of extensions between the respective pullback extensions
$$R_\alpha\stackrel{\tilde{\phi}}{--->}R_{\alpha'}.$$

Coming back to the notations of the lemma, we will  denote by $\alpha$ the common cocycle extension of $\alpha_1$ and $\alpha_2$ induced by $\eta$ and by $\alpha'$ 
the cocycle such that $\alpha'\circ \phi=\alpha$, then it is by definition the common cocycle extension of $\alpha_1'$ and $\alpha_2'$ induced by $\phi(\eta)$ (which is by definition the isomorphism $Id\times_\gr \eta$ between $P_{\alpha_1'}=P_{\phi^{-1}}\times_\gr P_{\alpha_1}$ and $P_{\alpha_2'}=P_{\phi^{-1}}\times_\gr P_{\alpha_2}$).
We have the following commutative diagram of extension's generalized isomorphisms
\begin{equation}\nonumber
\xymatrix{
R_{\alpha_1}\ar@/^2pc/[rr]^-{\eta}_-{\sim} \ar[d]_-{\tilde{\phi_1}}^-{\sim}\ar[r]^-\sim &R_\alpha\ar[d]_-{\tilde{\phi}}^-{\sim} &R_{\alpha_2}\ar[d]^-{\tilde{\phi_2}}_-{\sim}\ar[l]_-\sim
\\
R_{\alpha_1'}\ar@/_2pc/[rr]_-{\phi(\eta)}^-{\sim}\ar[r]_-\sim &R_{\alpha'} &R_{\alpha_2'}\ar[l]^-\sim
}
\end{equation}
which implies the desired result.

\end{proof}


\begin{remark}
For  the groupoid  given by  a manifold $M\rightrightarrows M$. A twisting on $M$ can be given by a Dixmier-Douday class on $H^3(M,\ZZ)$. In this event, the twisted K-theory, as we defined it, coincides with  twisted K-theory defined in \cite{ASeg,Kar08}. Indeed the $C^*$-algebra $C^*(M,\alpha)$ is Morita equivalent to the continuous trace $C^*$-algebra defined by the corresponding Dixmier-Douady class (see for instance Theorem 1 in \cite{FMW}).
\end{remark}

\subsection{Index morphism associated to an immersion of groupoids}
We  briefly discuss here the deformation groupoid of an immersion of groupoids which is called  the normal groupoid in   \cite{HS}.

Given  an immersion of Lie groupoids $\gr_1\stackrel{\varphi}{\rightarrow}\gr_2$, let 
$\gr^N_1=\mathscr{N}_{\gr_1}^{\gr_2}$ be the total space of the normal bundle to $\varphi$, and $(\gr_{1}^{(0)})^N$ be the total space of the normal bundle to $\varphi_0: \go_1 \to \go_2$.  Consider $\gr^N_1\rightrightarrows (\gr_{1}^{(0)})^N$ with the following structure maps: The source map is the derivation in the normal direction 
$d_Ns:\gr^N_1\rightarrow (\gr_{1}^{(0)})^N$ of the source map (seen as a pair of maps) $s:(\gr_2,\gr_1)\rightarrow (\gr_{2}^{(0)},\gr_{1}^{(0)})$ and similarly for the target map.

 As remarked by Hilsum-Skandalis (remarks 3.1, 3.19 in \cite{HS}), $\gr^N_1$ may fail to inherit a Lie groupoid structure (see counterexample just before section IV in \cite{HS}). A sufficient condition is when $(\gr_{1}^{(0)})^N$ is a $\gr_1$-vector bundle over $\gr_{1}^{(0)}$. This is the case when $\gr_1^x\to \gr_{2}^{\varphi(x)}$ is \'etale for every $x\in \gr_{1}^{(0)}$ (in particular if the groupoids are \'etale) or when one considers a manifold with two foliations $F_1\subset F_2$ and the induced immersion  (again 3.1, 3.19 in \cite{HS}).

 The deformation to the normal bundle construction allows us to consider a $C^{\infty}$ structure on 
$$
\gr_{\varphi}:=\left( \gr^N_1\times \{0\} \right) \bigsqcup  \left( \gr_2\times \mathbb{R}^*\right),
$$
such that $\gr^N_1\times \{0\}$ is a closed saturated submanifold and so $\gr_2\times \mathbb{R}^*$ is an open submanifold.
The following results are  an immediate consequence of the functoriality of the deformation to the normal cone construction.

\begin{proposition}[Hilsum-Skandalis, 3.1, 3.19 \cite{HS}]\label{HSimmer}
Consider an immersion $\gr_1\stackrel{\varphi}{\rightarrow}\gr_2$ as above for which $(\gr_1)^N$ inherits a Lie groupoid structure.
Let $\gr_{\varphi_0}:= \big( (\gr_{1}^{(0)})^N\times \{0\} \big) \bigsqcup \big(  \gr_{2}^{(0)}\times \mathbb{R}^* \big)$ be the deformation to the normal cone of  the  pair $(\gr_{2}^{(0)},\gr_{1}^{(0)})$. The groupoid
\begin{equation}
\gr_{\varphi}\rightrightarrows\gr_{\varphi_0}
\end{equation}
with structure maps compatible  with the ones of the groupoids $\gr_2\rightrightarrows \gr_{2}^{(0)}$ and $\gr_1^N\rightrightarrows (\gr_{1}^{(0)})^N$, is a Lie groupoid with $C^{\infty}$-structures coming from  the deformation to the normal cone.
\end{proposition}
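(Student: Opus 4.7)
The plan is to obtain every piece of the Lie groupoid structure on $\gr_{\varphi}$ by applying the functor $\mathscr{D}:\mathscr{C}_2^{\infty}\to\mathscr{C}^{\infty}$ of (\ref{fundnc}) to the structural maps of $\gr_2$. Since $\varphi$ is a strict morphism of Lie groupoids, the source, target, unit, inverse and multiplication maps of $\gr_2$ restrict to the corresponding maps of $\gr_1$, and hence define morphisms of pairs of $C^{\infty}$-manifolds
\begin{align*}
s,r&:(\gr_2,\gr_1)\to(\gr_2^{(0)},\gr_1^{(0)}),\\
u&:(\gr_2^{(0)},\gr_1^{(0)})\to(\gr_2,\gr_1),\\
i&:(\gr_2,\gr_1)\to(\gr_2,\gr_1),\\
m&:(\gr_2^{(2)},\gr_1^{(2)})\to(\gr_2,\gr_1).
\end{align*}
Applying $\mathscr{D}$ produces smooth maps $\mathscr{D}(s),\mathscr{D}(r):\gr_{\varphi}\to \gr_{\varphi_0}$, $\mathscr{D}(u):\gr_{\varphi_0}\to\gr_{\varphi}$, $\mathscr{D}(i):\gr_{\varphi}\to\gr_{\varphi}$ and $\mathscr{D}(m):\mathscr{D}_{\gr_1^{(2)}}^{\gr_2^{(2)}}\to\gr_{\varphi}$, which restrict on the open part $t\neq 0$ to the original structural maps of $\gr_2$ (tensored with $\mathbb{R}^{*}$), and on the closed part $t=0$ to the structural maps of the normal groupoid $\gr_1^N$.

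The first genuine task is to produce the natural identification of $C^{\infty}$-manifolds
\[
(\gr_{\varphi})^{(2)}\;\cong\;\mathscr{D}_{\gr_1^{(2)}}^{\gr_2^{(2)}},
\]
where the left-hand side is computed with respect to the pair $(\mathscr{D}(s),\mathscr{D}(r))$. Set-theoretically the two spaces agree on the open stratum trivially, and on the closed stratum because $\gr_1^N\times_{(\gr_1^{(0)})^N}\gr_1^N$ coincides with the total space of the normal bundle of $\gr_1^{(2)}\hookrightarrow \gr_2^{(2)}$. The hypothesis that $(\gr_1^{(0)})^N$ carries a $\gr_1^N$-vector bundle structure over $\gr_1^{(0)}$ (which is precisely what ensures that $\gr_1^N$ is itself a Lie groupoid) forces $\mathscr{D}(s)$ and $\mathscr{D}(r)$ to be submersions on both strata, and a slice-by-slice verification built from a choice of $\gr_1$-slices in $\gr_2$ and the induced $\gr_1^{(2)}$-slices in $\gr_2^{(2)}$ upgrades the set-theoretic identification to a diffeomorphism.

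Once the identification above is in place, $\mathscr{D}(m)$ is a smooth multiplication on $\gr_{\varphi}$, and the remaining step is purely formal: the groupoid axioms are equalities between compositions of $s,r,u,i,m$ in the category $\mathscr{C}_2^{\infty}$, and the functor $\mathscr{D}$ transports each such equality into the corresponding identity for the maps $\mathscr{D}(s),\mathscr{D}(r),\mathscr{D}(u),\mathscr{D}(i),\mathscr{D}(m)$. The main obstacle I anticipate is the middle step, namely the smooth identification of $(\gr_{\varphi})^{(2)}$ with $\mathscr{D}_{\gr_1^{(2)}}^{\gr_2^{(2)}}$ across $t=0$; this is exactly the point where the technical hypothesis on $(\gr_1^{(0)})^N$ is used and without it the result can indeed fail, as already witnessed by the counterexample of \cite{HS}. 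Once this step is secured, everything else is forced by the functoriality of $\mathscr{D}$.
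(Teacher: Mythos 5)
Your proposal is correct and follows essentially the same route the paper takes (and spells out explicitly for the tangent groupoid): view the structural maps of $\gr_2$ as maps of pairs, apply the functor $\mathscr{D}$, identify $(\gr_{\varphi})^{(2)}$ with $\mathscr{D}_{\gr_1^{(2)}}^{\gr_2^{(2)}}$, and let functoriality transport the groupoid axioms. You also correctly locate the only non-formal point — the smooth identification of the composable pairs across $t=0$ and the role of the hypothesis that $(\gr_1^{(0)})^N$ is a $\gr_1^N$-vector bundle — which is precisely the issue flagged in Hilsum--Skandalis, to whom the paper defers for the details.
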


One of the interest of these kind of groupoids is to be able to define deformation indices.
Indeed, restricting the deformation to the normal cone construction to the closed interval $[0,1]$ and since the groupoid $\gr_2 \times (0,1]$ is an open saturated subgroupoid of $\gr_{\varphi}$ (see 2.4 in \cite{HS} or \cite{Ren} for more details), we have a short exact sequence of $C^*-$algebras
\begin{equation}\label{segtimm}
0 \rightarrow C^*(\gr_2 \times (0,1]) \longrightarrow C^*(\gr_{\varphi})
\stackrel{ev_0}{\longrightarrow}
C^*(\gr_1^N) \rightarrow 0,
\end{equation}
with $C^*(\gr_2 \times (0,1])$ contractible. Then the 6-term exact sequence in K-theory  provide   the isomorphism
\[
(ev_0)_{*}:  K_*(C^*(\gr_\varphi))   \cong K_*(C^*(\gr_1^N)). 
\]
 Hence
we can define the index morphism
$$D_{\varphi}:K_*(C^*(\gr_1^N))\longrightarrow K_*(C^*(\gr_2))$$
between the K-theories of the maximal $C^*$-algebras as the induced deformation index morphism 
\[
D_{\varphi}:=(ev_1)_*\circ(ev_0)_{*}^{-1}: K_*(C^*(\gr_1^N)) \cong K_*(C^*(\gr_\varphi))  \longrightarrow  K_*(C^*(\gr_2)) .
\]

\subsection{The index of a groupoid immersion with a twisting}\label{propimm}
Now Consider an immersion of Lie groupoids $\gr_1\stackrel{\varphi}{\rightarrow}\gr_2$  with  a twisting $\alpha$ on $\gr_2$  for which $(\gr_1)^N$ inherits a Lie groupoid structure. We will see that we can still define index morphisms. First we prove the following elementary  result.

\begin{proposition}\label{deftwistimm}
Given an immersion of Lie groupoids $\gr_1\stackrel{\varphi}{\rightarrow}\gr_2$  as above and a twisting $\alpha$ on $\gr_2$.  There is a canonical twisting $\alpha_\varphi$ on the Lie groupoid 
$\gr_{\varphi} \rightrightarrows \gr_{\varphi_0}$,  extending the pull-back  twisting on $\gr_2\times \mathbb{R}^*$ from $\alpha$.
\end{proposition}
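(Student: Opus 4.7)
The plan is to produce $\alpha_\varphi$ by applying the deformation to the normal cone (DNC) functor $\mathscr{D}$ of (\ref{fundnc}) to a commutative square of pairs of Lie groupoids built from a cocycle representative of $\alpha$ and from the immersion $\varphi$.

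First I would represent $\alpha$ by a strict morphism $\alpha:(\gr_2)_\Omega\to PU(H)$ as in (\ref{galpha}), where $\Omega=\{\Omega_i\}$ is an open cover of $\gr_2^{(0)}$. Pulling back along $\varphi_0$ gives a cover $\Omega'=\{\varphi_0^{-1}(\Omega_i)\}$ of $\gr_1^{(0)}$, and $\varphi$ restricts to a strict immersion of \v{C}ech groupoids $\varphi_\Omega:(\gr_1)_{\Omega'}\to(\gr_2)_\Omega$. Since the canonical maps $(\gr_1)_{\Omega'}\to\gr_1$ and $(\gr_2)_\Omega\to\gr_2$ are Morita equivalences, the hypothesis of Proposition~\ref{HSimmer} still holds for $\varphi_\Omega$, and applying DNC yields a Lie groupoid
$$
\widetilde{\gr}_\varphi:=\mathscr{D}_{(\gr_1)_{\Omega'}}^{(\gr_2)_\Omega}
$$
which is Morita equivalent to $\gr_\varphi$: indeed $\bigsqcup_i\mathscr{D}_{\varphi_0^{-1}(\Omega_i)}^{\Omega_i}$ is an open cover of $\gr_{\varphi_0}$, and $\widetilde{\gr}_\varphi$ is canonically isomorphic to the corresponding \v{C}ech groupoid of $\gr_\varphi$ for this cover.

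Next I would consider the strict morphism of pairs of Lie groupoids
$$
\bigl((\gr_2)_\Omega,(\gr_1)_{\Omega'}\bigr)\xrightarrow{(\alpha,\,\alpha\circ\varphi_\Omega)}(PU(H),PU(H)),
$$
and apply $\mathscr{D}$. Because the unit space of $PU(H)$ is a single point, the normal bundle of $PU(H)$ in itself is the zero bundle and $\mathscr{D}_{PU(H)}^{PU(H)}\cong PU(H)\times\RR$ canonically as Lie groupoids over $\RR$. Functoriality of $\mathscr{D}$ on pairs of Lie groupoids (the same mechanism that makes the tangent groupoid a Lie groupoid) shows that the resulting smooth map, composed with projection to the first factor,
$$
\widetilde{\alpha}_\varphi:\widetilde{\gr}_\varphi\longrightarrow PU(H)\times\RR\longrightarrow PU(H),
$$
is a strict morphism of Lie groupoids. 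Composing with the Morita equivalence bi-bundle $\widetilde{\gr}_\varphi\sim\gr_\varphi$ then yields a generalized morphism $\alpha_\varphi$ from $\gr_\varphi$ to $PU(H)$, i.e., the desired twisting. On the open dense part $\gr_2\times\RR^*\subset\gr_\varphi$ the DNC identification (\ref{psi}) reduces to the identity on the $\gr_2$ factor, so $\alpha_\varphi$ restricts to the pull-back of $\alpha$ along the first projection $\gr_2\times\RR^*\to\gr_2$, as required. Two cocycle representatives of $\alpha$ being equivalent as generalized morphisms (Lemma~\ref{etaMorita}), functoriality of $\mathscr{D}$ implies that $\alpha_\varphi$ is canonical up to equivalence of twistings, which justifies the word \emph{canonical} in the statement.

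The main obstacle will be to verify that $\mathscr{D}$ really sends the strict square above to a strict morphism of Lie groupoids with $PU(H)$ as target. Since $PU(H)$ has a point as unit space, all normal-derivative components at $t=0$ are trivially defined and no infinite-dimensional analysis intervenes; what remains is to check, exactly as for the tangent groupoid, that $\mathscr{D}$ commutes with source, target, unit, product, and inverse maps, which is routine from the functoriality of (\ref{fundnc}).
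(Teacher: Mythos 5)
Your construction is correct and rests on the same engine as the paper's one‑paragraph proof, namely the functoriality of the deformation to the normal cone, but you apply it to a different presentation of the twisting. The paper uses the bi‑bundle presentation: it views the action map $\gr_2\times_{\gr_{2}^{(0)}}P_{\alpha}\to P_{\alpha}$ as a morphism of pairs, the sub‑objects being built from $P_{\alpha\circ\varphi}=\gr_{1}^{(0)}\times_{\gr_{2}^{(0)}}P_{\alpha}$, and applies $\mathscr{D}$ once to obtain directly a $PU(H)$‑principal bundle over $\gr_{\varphi_0}$ with a compatible $\gr_{\varphi}$‑action; this is manifestly independent of any choice of cover and needs no Morita bookkeeping. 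You instead use the cocycle presentation, deform the pair of \v{C}ech groupoids, and identify $\mathscr{D}_{(\gr_1)_{\Omega'}}^{(\gr_2)_{\Omega}}$ with the \v{C}ech groupoid of $\gr_{\varphi}$ for the induced cover of $\gr_{\varphi_0}$. That identification is the one step you should justify more explicitly: it uses that $\mathscr{D}$ commutes with restriction to open subsets and with the fibered products $\hr_{\Omega_j}^{\Omega_i}=r^{-1}(\Omega_i)\cap s^{-1}(\Omega_j)$, which holds because $s^{\gr_\varphi}=\mathscr{D}(s)$ and $r^{\gr_\varphi}=\mathscr{D}(r)$, but it is not automatic from Morita invariance alone. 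In exchange for this extra work you get a strict morphism into $PU(H)\times\RR$ with no normal derivative to compute (the normal bundle of $PU(H)$ in itself being zero), and you avoid applying $\mathscr{D}$ to the infinite‑dimensional total space $P_{\alpha}$. Your closing remarks — that the restriction to $\gr_2\times\RR^*$ is the pull‑back of $\alpha$, and that different cocycle representatives yield equivalent twistings — are correct and the latter is a point of canonicity the paper's proof does not even address. Both routes are legitimate; the paper's is shorter because the bi‑bundle datum already packages the equivariance that you must reassemble through the Morita equivalence.
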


\begin{proof}
The proof is a simple application of the functoriality of the deformation to the normal cone construction. Indeed, the twisting $\alpha$ on $\gr_2$ induces by pullback (or composition of cocycles) a twisting $\alpha\circ \varphi$ on $\gr_1$. The twisting $\alpha$ on $\gr_2$ is given by a $PU(H)$-principal bundle $P_{\alpha}$ with a compatible left action of $\gr_2$, and by definition the twisting $\alpha\circ \varphi$ on $\gr_1$ is given by the pullback of $P_{\alpha}$ by 
$\varphi_0: \gr_{1}^{(0)} \to \gr_{2}^{(0)}$. In particular,  we have $P_{\alpha\circ \varphi}=\gr_{1}^{(0)}\times_{\gr_{2}^{(0)}}P_{\alpha}$. 
Hence the action map 
$\gr_2\times_{\gr_{2}^{(0)}}P_{\alpha}\to P_{\alpha}$ can be considered as   in the category of pairs:

$$(\gr_2\times_{\gr_{2}^{(0)}}P_{\alpha},\gr_1\times_{\gr_{1}^{(0)}}P_{\alpha\circ\varphi})\longrightarrow
(\gr_{2}^{(0)}\times_{\gr_{2}^{(0)}}P_{\alpha},\gr_{1}^{(0)}\times_{\gr_{1}^{(0)}}P_{\alpha\circ \varphi}).$$

We can then apply the deformation to the normal cone functor to obtain the desire $PU(H)$-principal bundle with a compatible $\gr_{\varphi}$-action, which gives the desired twisting.
\end{proof}

We will now define the index morphism associated to an immersion 
$\gr_1\rightarrow \gr_2$ as above under the presence of a twisting on $\gr_2$. Associated to the twisted groupoid $(\gr_\varphi,\alpha_\varphi)$ of the last proposition there is an $S^1$-central extension 
$R_{\alpha_\varphi}$ which has an open dense subextension $R_{\alpha_{(0,1]}}$, the $S^1$-central extension associated to $(\gr_2\times (0,1], \alpha_{(0,1]})$ where $\alpha_{(0,1]}$ the twisting giving by the projection $\gr_2 \times (0,1]\to\gr_2$. Denoting $\alpha^N:=\alpha_{\varphi}|_{\gr_1^N}$, there is a short exact sequence of $C^*-$algebras
\begin{equation}\label{twistedses}
0 \rightarrow C^*(R_{\alpha_{(0,1]}}) \longrightarrow C^*(R_{\alpha_{\varphi}})
\stackrel{ev_0}{\longrightarrow}
C^*(R_{\alpha^N}) \rightarrow 0,
\end{equation}
which respects the $\mathbb{Z}$-grading (\ref{gradext}) and it defines thus a short exact sequence of $C^*-$algebras
\begin{equation}\label{twistedses1}
0 \rightarrow C^*(\gr_2 \times (0,1],\alpha_{(0,1]}) \longrightarrow C^*(\gr_{\varphi},\alpha_{\varphi})
\stackrel{ev_0}{\longrightarrow}
C^*(\gr_1^N,\alpha^N) \rightarrow 0.
\end{equation}
The disintegration results in \cite{Ren87} also conclude the same result directly with the Fell bundle's algebras without passing through the extensions. Hence
we can define the index morphism 
$$D_{\varphi}:K_*(C^*(\gr_1^N,\alpha^N))\longrightarrow K_*(C^*(\gr_2,\alpha))$$
between the K-theories of the maximal $C^*$-algebras as the induced deformation morphism $Ind_{\varphi}:=(ev_1)_*\circ(ev_0)_{*}^{-1}$ exactly as in the untwisted case. 

\section{Groupoid equivariant pushforward and wrong way functoriality}\label{pushsection}
Let  $\gr\rightrightarrows M$  be a Lie groupoid with a given twisting $\alpha$.
A  $\gr$-manifold  $P$ is a smooth manifold $P$  with  a momentum map $\pi_P:P\to M$, which is assumed to be an oriented submersion,  and  a right action of $\gr$ on $P$:  $P\rtimes \gr  \to P $  given by
 $(p, \gamma) = p\circ \gamma$ such that
 \[
 (p\circ \gamma_1) \circ \gamma_2 = p\circ (\gamma_1 \cdot \gamma_2)
 \]
 for any $(\gamma_1, \gamma_2) \in \gr^{(2)}$.
  Here $P\rtimes \gr =\{ (p, \gamma) \in P\times \gr |  \pi_P (p) = r(\gamma)\}.$ 
 We will denote by $T^vP$ the vertical tangent bundle associated to $\pi_P$. 
 A  $\gr$-manifold  $P$ is called $\gr$-proper if the map
 \[
 P\rtimes \gr \longrightarrow P\times P
 \]
 defined by $(p, \gamma) \mapsto (p,  p\circ \gamma)$ is  proper. Then  the induced action groupoid
 \[
  P\rtimes \gr \rightrightarrows  P
  \]
  with $s(p, \gamma) =p, r(p, \gamma)=  p\circ \gamma $ is a proper Lie groupoid. 
 
{\bf Hypothesis:} In what follows, for any $\gr$-manifold $P$ as above, we will assume that $T^vP$ is oriented and that it admits a $\gr$-invariant metric. This is the case when $\gr$ acts on 
$P$ properly.
We will  construct  the twisted geometric K-homology group and the Baum-Connes assembly map
under this assumption.

Let $P,N$ be two proper  $\gr$-manifolds and $f: P\longrightarrow N$ be  a smooth oriented $\gr$-equivariant map with a twisting $\alpha$ on $N\rtimes \gr$. 
Using only geometric deformation groupoids, we will construct a morphism, called the shriek map $f_!$
\begin{equation}\label{fshriekintrosection}
\xymatrix{
K^*(P\rtimes \gr, \alpha+\o_f)\ar[r]^-{f_!}&K^*(N\rtimes \gr,\alpha)
}
\end{equation}
where $\o_f$ is the twisting over $P\rtimes \gr$ given by the 
$\gr$-vector bundle $f^*T^vN\oplus T^vP$.
The main result of this section is the functoriality of this shriek map. A main ingredient in the construction is  the twisted equivariant Thom isomorphism which is reviewed in the appendix. 

We  recall the definition of the semi-direct product groupoid. Consider a Lie groupoid $H_A\rightrightarrows A$, we say that it is a $\gr$-groupoid if $\gr$ acts on $H_A$ and  $A$ such that  the source and target maps of $H_A$ are $\gr-$equivariant. Under this situation we might form the semi-direct product groupoid $$H_A\rtimes \gr\rightrightarrows A.$$ Typically, but not exclusively, $H_A\rightrightarrows A$ will be a $\gr-$vector bundle $E$ over $P$ considered as groupoid   $E\rightrightarrows P$ or $E\rightrightarrows E$ considered as a manifold. We will mention every time, if not obvious, which case
we are considering.

\subsection{Twisted wrong way functoriality for $\gr$-manifolds}

The construction of the shriek map (\ref{fshriekintrosection}) follows the lines of Connes construction, II.6 in  \cite{Concg}, see also \cite{Shim} for a more complete description in the K-oriented untwisted case. It is divided in four steps.

{\bf Step 1.} The first step is the twisted $\gr$-equivariant Thom isomorphism associated to the vector bundle $T^vP\to P$, applied to the twisting $\alpha+\o_f$ over $P\rtimes \gr$  

\begin{equation}\label{Step1map}
\xymatrix{
K^*(P\rtimes \gr,\alpha+\o_f)\ar[r]^-{\mathscr{T}_{T^vP}^{\gr}}&K^*(T^vP\rtimes \gr,\alpha+\o_{f^*T^vN}).
}
\end{equation} 
Indeed this is due to the fact that $\o_f+\o_{T^vP}$ is canonically homotopic (as twistings) to $\o_{f^*T^vN}$.

{\bf Step 2.} The second step is the twisted equivariant Thom isomorphism associated to the action (as a groupoid) of $T^vP$ on $(f^*T^vN)^*$, that is, the Thom  isomorphism associated to the $T^vP\rtimes \gr$-vector bundle $(f^*T^vN)^*$ over $P$, applied to the twisting 
$\alpha+\o_{f^*T^vN}$ 
\begin{equation}\label{Step2map}
\xymatrix{
K^*(T^vP\rtimes \gr,\alpha+\o_{f^*T^vN})\ar[rr]^-{\mathscr{T}_{(f^*T^vN)^*}^{T^vP\rtimes\gr}}&&
K^*((f^*T^vN)^*\rtimes (T^vP\rtimes \gr),\alpha+ \o_{f^*T^vN}+\o_{(f^*T^vN)^*})
}
\end{equation}

{\bf Step 3.} The third step is the isomorphism in twisted $K$-theory
\begin{equation}\label{Step3map}
\xymatrix{
K^*((f^*T^vN)^*\rtimes (T^vP\rtimes \gr),\alpha+ \o_{f^*T^vN}+\o_{(f^*T^vN)^*})\ar[rr]^-{\mathscr{F}}&&K^*(f^*T^vN\rtimes (T^vP\rtimes \gr),\alpha)
}
\end{equation}
induced by the Fourier isomorphism of $C^*$-algebras, proposition 2.12 \cite{CaWangAdv},

\begin{equation}\label{FourierStep3map}
\xymatrix{
C^*((f^*T^vN)^*\rtimes (T^vP\rtimes \gr),\alpha+ \o_{f^*T^vN}+\o_{(f^*T^vN)^*})\ar[r]^-{F}&C^*(f^*T^vN\rtimes (T^vP\rtimes \gr),\alpha)
}
\end{equation}
where the first groupoid is obtained from the semi-direct product of $T^vP\rtimes \gr\rightrightarrows P$ acting on $(f^*T^vN)^*\rightrightarrows P$ and the second is obtained from the semi-direct product  of $T^vP\rtimes \gr\rightrightarrows P$ acting on $f^*T^vN\rightrightarrows f^*T^vN$.

{\bf Step 4.}
The final step is to consider the groupoid immersion
\begin{equation}
\xymatrix{
P\ar[r]^-{f\times \triangle} & N\times_M(P\times_M P). 
}
\end{equation}
The associated  deformation groupoid is  $\gr_f\rightrightarrows \gr_{f}^{(0)}$ 
where
\begin{equation}
\gr_f:=f^*(T^vN)\rtimes T^vP\times \{0\}\bigsqcup N\times_M(P\times_M P) \times (0,1]\, \text{and}
\end{equation}
\begin{equation}
\gr_{f}^{(0)}=f^*T^vN\times \{0\}\bigsqcup N\times_M P\times (0,1]
\end{equation}
Notice that $N\times_M(P\times_M P)$ and $N$ are Morita equivalent groupoids with the  Morita equivalence   given by the canonical projection.

The functoriality of the deformation to the normal cone construction yields an action of $\gr$ on $\gr_f$.
Let $\alpha_f$ be the twisting on $\gr_f\rtimes \gr$ given by proposition \ref{deftwistimm}. It is immediate to check that $$\alpha_f|_{(f^*(T^vN)\rtimes T^vP)\rtimes \gr}
=\pi_{f^*T^vN\rtimes T^vP}^*\alpha.$$

We can hence consider the twisted deformation index morphism associated to $(\gr_f\rtimes \gr,\alpha_f)$ :

\begin{equation}\label{Step4map}
\xymatrix{
K^*(f^*T^vN\rtimes (T^vP\rtimes \gr),\alpha)\ar[r]^-{D_f}&K^*(N\times_M(P\times_M P)\rtimes\gr,\alpha)\ar[d]^-{m}_-{\cong}\\&K^*(N\rtimes\gr,\alpha)
}
\end{equation} 
where we denoted $D_f$ instead of $D_{f\times \triangle}$ for keeping the notation short, and $m$ is the isomorphism defined by the Morita equivalence between $N\times_M(P\times_M P)\rtimes\gr$ and $N$.

\begin{definition}[Pushforward morphism]
Let $P,N$ be two manifolds and $f:P\longrightarrow N$ be a smooth oriented $\gr$-equivariant map \footnote{Remember we are assuming that both $T^vP$ and $T^vN$ admit a $\gr$-invariant metric}. Under the presence of a twisting $\alpha$ on $ \gr$ we let
\begin{equation}
\xymatrix{
K^*(P\rtimes \gr,  \alpha+\o_f) \ar[r]^-{f_!}&K^*(N\rtimes\gr,\alpha)
}
\end{equation}
 be the morphism given by the composition of the morphisms given in the three last steps, that is, the morphism (\ref{Step1map}) followed by (\ref{Step2map}) followed by (\ref{Step4map}). By definition $f_!$ fits in the fowolling commutative diagram:
\begin{equation}\label{diagpush}
{\small 
\xymatrix{
K^*(P\rtimes \gr,   \alpha+\o_f)\ar[rrdd]_-{f_!}\ar[r]^-{\mathscr{T}}&K^*(T^vP\rtimes \gr, \alpha+\o_{f^*T^vN})\ar[r]^-{\mathscr{T}_F}&
K^*(f^*T^vN\rtimes T^vP\rtimes \gr,\alpha)\ar[d]^-{D_f}\\&&K^*(N\times_M(P\times_M P)\rtimes\gr,\alpha)\ar[d]^-{m}_-{\cong}\\&&K^*(N\rtimes\gr,\alpha)
}}
\end{equation}
\end{definition}

where $\mathscr{T}_F$ will denote the Thom isomorphism from (\ref{Step2map}) followed by the Fourier isomorphism (\ref{Step3map}).

Our first main result is the wrong way functoriality of the precedent construction.

\begin{theorem}\label{twistfun}
The above push-forward morphism is functorial, that means, if we have a composition of smooth $\gr$-maps between $\gr-$manifolds as above:
\begin{equation}
P\stackrel{f}{\longrightarrow}N\stackrel{g}{\longrightarrow}L,
\end{equation}

and a twisting  $\xymatrix{\alpha:\gr \ar@{-->}[r] &PU(H)}$,  then the following diagram commutes
\[
\xymatrix{
K^*(P\rtimes \gr,\alpha+\o_{g\circ f}) \ar[rr]^-{(g\circ f)_!}\ar[rd]_-{f_!}&&K^*(L\rtimes \gr,\alpha)\\
&K^*(N\rtimes \gr,\alpha+\o_g)\ar[ru]_-{g_!}&
}
\]

\end{theorem}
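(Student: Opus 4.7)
The plan is to reduce the equality $(g\circ f)_! = g_!\circ f_!$ to a statement about a single bi-parametric deformation groupoid, in the spirit sketched by Connes in \cite{Concg}, II.6. Concretely, I would first use the naturality of the twisted equivariant Thom isomorphism from the appendix to combine the Thom/Fourier packages (Steps 1--3 in the definition of the shriek map) appearing in $g_!\circ f_!$ into a single package that matches the one defining $(g\circ f)_!$, and then I would show that the resulting composition of deformation indices $\mu\circ D_g\circ D_f$ equals $\mu\circ D_{g\circ f}$ by means of an iterated deformation groupoid.

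\textbf{Combining the Thom/Fourier packages.} Writing both $f_!$ and $g_!$ via diagram (\ref{diagpush}) and using the canonical homotopies $\o_{g\circ f}\simeq \o_f+(g\circ f)^*\o_g$ together with the decomposition $f^*T^vN\oplus (g\circ f)^*T^vL \cong f^*(T^vN\oplus g^*T^vL)$, the composed Thom isomorphisms for $T^vP$ followed by $(f^*T^vN)^*$ in $f_!$ and the analogous ones for $g$ occurring after the deformation index $D_f$ can be rearranged. The naturality of the twisted $\gr$-equivariant Thom isomorphism, which extends to the action and deformation groupoids by a standard deformation argument, permits commuting the Thom isomorphism $\mathscr{T}_{(g^*T^vL)^*}^{T^vN\rtimes\gr}$ appearing in $g_!$ past the index map $D_f$. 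After this rearrangement, $g_!\circ f_!$ is expressed as a single Thom/Fourier package (the one appearing in the definition of $(g\circ f)_!$) followed by the composition of deformation indices associated to the two nested immersions.

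\textbf{Double deformation groupoid.} For the remaining equality of index morphisms, I would apply Proposition \ref{HSimmer} and Proposition \ref{deftwistimm} iteratively to the composition of immersions
\begin{equation}\nonumber
P\;\stackrel{f\times\triangle}{\hookrightarrow}\; N\times_M (P\times_M P)\;\stackrel{(g\times\triangle)\times\mathrm{id}}{\hookrightarrow}\; L\times_M(N\times_M N)\times_M(P\times_M P),
\end{equation}
obtaining a Lie groupoid $\gr_{f,g}$ parametrized by $[0,1]^2$ equipped with a twisting $\alpha_{f,g}$ extending the pullback of $\alpha$. Restricting to the edge $\{t=0\}$ recovers (up to Morita equivalence) the deformation $\gr_g$ stacked over the normal groupoid for $f$; restricting to $\{s=0\}$ recovers $\gr_f$ stacked with the target of $D_g$; while the diagonal edge gives the single deformation $\gr_{g\circ f}$. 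The short exact sequences (\ref{twistedses}) associated to these three restrictions, together with the contractibility of the open parts, then imply in K-theory that the two composed index morphisms coincide.

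\textbf{Main obstacle.} The most delicate aspect is the twisting bookkeeping for the iterated semidirect products. Specifically, one must check that the twisting $\alpha_{f,g}$ on the double deformation groupoid restricts, on each stratum, to exactly the twisting appearing in the corresponding stage of the definition of $f_!$ or $g_!$, and that the various orientation twistings $\o_f$, $\o_g$, $\o_{g\circ f}$ are compatible via canonical homotopies coming from the functoriality of the deformation to the normal cone in the category of pairs (cf.\ (\ref{fundnc})). Once this compatibility is in place, the statement reduces to the analogous untwisted assertion for the extension groupoid $R_{\alpha_{f,g}}$, for which the Connes-type double deformation argument applies directly.
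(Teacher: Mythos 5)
Your proposal follows essentially the same route as the paper's proof: one first uses naturality of the twisted equivariant $\gr$-Thom isomorphism to transport the Thom/Fourier package for $g$ across the deformation index $D_f$ (the paper's diagrams V and VIII, via the deformed bundles $\Dnc_{T^vN}$, $\mathcal{DL}$ and Proposition \ref{Thomproperties}), and then compares the composed deformation indices with $D_{g\circ f}$ by means of a two-parameter (double) deformation groupoid --- the paper's $\mathbb{D}$, built from $\mathcal{DL}\rtimes(\Dnc_{T^vN}\rtimes\gr_f)$, the Thom groupoid $\mathbf{T}_N$ and $\gr_{(g\circ f,f^2)}$ --- followed by the Morita collapse of $L\times_M(N\times_M N)\times_M(P\times_M P)$ onto $L\times_M(P\times_M P)$. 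Your identification of the twisting bookkeeping (the canonical equivalences $\o_{g\circ f}\simeq\o_f+(g\circ f)^*\o_g$, $\o_g+\o_N\simeq\o_L$, etc.) as the delicate point also matches how the paper organizes the argument, so the outline is correct and coincides with the paper's strategy.
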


\begin{proof}

Let us recall the notations and definitions we used above to define the shriek maps:
$f_!:=m\circ D_f\circ \mathscr{T}_F\circ \mathscr{T}$, $g_!:=m\circ D_g\circ \mathscr{T}_F\circ \mathscr{T}$ and 
$(g\circ f)!:=m\circ D_{g\circ f}\circ \mathscr{T}_F\circ \mathscr{T}$, where $m$ stand for the Morita isomorphisms (induced by Morita equivalences) and $\mathscr{T}$, $\mathscr{T}_F$ for the Thom isomorphisms respectively ($\mathscr{T}_F$ for Fourier isomorphism as in (\ref{diagpush})). In the following diagram, for keep short the notations, we only put the groupoid involved instead of  its 
twisted crossed product K-theory. With this convention understood, we need to prove that the following  diagram  is commutative.

\begin{equation}
{\tiny \xymatrix{
&&&&&\\
&&&&&
\\
&&f^*T^vN\rtimes T^vP\ar@{}[rrdd]_-{{\bf V}}\ar[rr]^-{D_f}\ar@{.>}[dd]&&N\times_M(P\times_M P)\ar[r]^-m\ar[d]_-{\mathscr{T}^m}\ar@{}[rd]|-{{\bf IV}}&
N\ar@/^3pc/[ddddd]^-{g_!}\ar[d]^-{\mathscr{T}}
\\
&T^vP\ar[ru]^-{\mathscr{T}_F}&&&T^vN \times_M(P\times_M P)\ar@{}[rd]|-{{\bf III}}\ar[d]_-{\mathscr{T}_F^m}\ar[r]^-m&T^vN\ar[d]^-{\mathscr{T}_F}
\\
P\ar@/^8pc/[rrrrruu]^-{f_!}\ar@/_8pc/[rrrrrddd]_-{(g\circ f)!}\ar@{}[rr]|-{{\bf VIII}}\ar[ru]_-{\mathscr{T}}\ar[rd]^-{\mathscr{T}}&&?\ar@{}[rrdd]|-{{\bf VI}}\ar@{.>}[dd]\ar@{.>}[rr]&&(g^*T^vL\rtimes T^vN)\times_M (P\times_M P)\ar@{}[rdd]|-{{\bf II}}\ar[r]_-m\ar[dd]_-{D^m_g}&g^*T^vL\rtimes T^vN\ar[dd]^-{D_g}
\\
&T^vP\ar[rdd]_-{\mathscr{T}_F}&&&&
\\
&&??\ar@{}[rrd]|-{{\bf VII}}\ar@{.>}[rr]\ar[d]^-m&& L\times_M(N\times_M N)\times_M (P\times_M P)
\ar@{}[rd]|-{{\bf I}}\ar[r]_-m\ar[d]^-m&L\times_M(N\times_M N)\ar[d]^-m
\\
&&f^*g^*T^vL\rtimes T^vP\ar[rr]_-{D_{g\circ f}}&& L\times_M (P\times_M P)\ar[r]_-m&L\\
&&&&&
}}
\end{equation}
As visually sketched in the diagram, we will separate it in 8 diagrams (I-VIII above). We will then prove that each of them is commutative. 

In the diagrams {\bf I, II, III} and {\bf IV} the notation $m$ stands for the isomorphisms in K-theory induced by the canonical Morita equivalences, it is immediate these diagrams commute by the naturality of the Thom isomorphisms and  the naturality of evaluation morphisms.

{\bf More NOTATION:} As we already mentioned above, every time we put $\alpha$ we mean the pullback twisting induced by $\alpha$ on the correspondent crossed product groupoid. We will also be dealing with several twistings coming this time from oriented vector bundles, and again, for keeping the notation as short as possible we will only denote by $\o_N$ the orientation twistings associated to the vector bundle $T^vN$ (similarly $\o_L$ corresponds to $T^vL$) independently of the  crossed product groupoid over which the twisting lives. The context is clear enough to understand that we are in fact using the twisting of some pullback vector bundle, for example over $P\rtimes \gr$ the twisting $\o_N$ corresponds in fact to the twisting coming from the $\gr$-vector bundle $f^*T^vN$ over $P$.

{\bf Definition and commutativity of diagram V.}

Consider the twisted equivariant Thom isomorphism associated to the vector bundle $f^*T^vN\oplus f^*T^vN$ over $f^*T^vN$ seen as a $f^*T^vN\rtimes (T^vP\rtimes \gr)$-vector bundle:
\begin{equation}
K_{\alpha+\o_g}^\gr(f^*T^vN\rtimes T^vP)\stackrel{\mathscr{T}}{\longrightarrow}K_{\alpha+\o_g+\o_N}^\gr((f^*T^vN\oplus f^*T^vN)\rtimes T^vP).
\end{equation}

Next, consider also the action of the groupoid $(f^*T^vN\oplus f^*T^vN)\rtimes T^vP$ on the vector bundle $\mathcal{L}^*$ over $f^*T^vN$ given by the pullback of $(T^vL)^*\to L$ by the canonical map $f^*T^vN\to L$. We can consider the associated transformation groupoid and the correspondent twisted equivariant Thom isomorphism
{\small
\begin{equation}\label{diagVa}
K_{\alpha+\o_g+\o_N}^\gr((f^*T^vN\oplus f^*T^vN)\rtimes T^vP)\stackrel{\mathscr{T}}{\longrightarrow}K_{\alpha+\o_g+\o_N+\o_L}^\gr(\mathcal{L}^*\rtimes ((f^*T^vN\oplus f^*T^vN)\rtimes T^vP)).
\end{equation}
}
We need now to consider the isomorphism induced by Fourier isomorphism:
{\small
\begin{equation}\label{diagVb}
K_{\alpha+\o_g+\o_N+\o_L}^\gr(\mathcal{L}^*\rtimes ((f^*T^vN\oplus f^*T^vN)\rtimes T^vP))\stackrel{\mathscr{F}}{\longrightarrow}K_{\alpha+\o_g+\o_N+\o_L}^\gr(\mathcal{L}\rtimes ((f^*T^vN\oplus f^*T^vN)\rtimes T^vP))
\end{equation}
}
to finally consider the composition of the two precedent morphisms:
{\small
\begin{equation}\label{diagVc}
K_{\alpha+\o_g+\o_N}^\gr((f^*T^vN\oplus f^*T^vN)\rtimes T^vP)\stackrel{\mathscr{T}_F}{\longrightarrow}K_{\alpha+\o_g+\o_N+\o_L}^\gr(\mathcal{L}\rtimes ((f^*T^vN\oplus f^*T^vN)\rtimes T^vP)).
\end{equation}
}

Remember that the deformation index morphism $\Dnc_f$ is defined using the deformation groupoid 
$$\gr_f\rightrightarrows \gr_{f}^{(0)}$$
where $\gr_{f}^{(0)}=f^*T^vN\times \{0\}\bigsqcup N\times_M P\times (0,1]$, the deformation to the normal cone associated to the immersion $P\stackrel{f\times id_P}{\rightarrow}N\times_M P$. We will consider a vector bundle over $\gr_{f}^{(0)}$: take $\Dnc_{T^vN}$ to be the deformation to the normal cone of 
$P\stackrel{s_0(f)\times id_P}{\rightarrow}T^vN\times_M P$, where $s_0:N\to T^vN$ stands for the zero section. We have a vector bundle
\begin{equation}
\Dnc_{T^vN}:=f^*T^vN\oplus f^*T^vN\bigsqcup T^vN\times_M P\times (0,1]\stackrel{\Dnc(\pi)}{\longrightarrow} f^*T^vN\bigsqcup N\times_M P\times (0,1],
\end{equation}
where $\Dnc(\pi)$ is the deformation of the morphisms of pairs
$\pi:(T^vN\times_M P,P)\to (N\times_M P,P)$.

Now, the groupoid $\gr_f$ acts on $\Dnc_{T^vN}$. Indeed we take  the deformation of the trivial action of $N\times_M (P\times_M P)$ on $T^vN\times_M P$. We can then consider the twisted equivariant Thom isomorphism 
\begin{equation}
K_{\alpha+\o_g}^\gr(\gr_f)\stackrel{\mathscr{T}_F}{\longrightarrow}K_{\alpha+\o_g+\o_{\Dnc_{T^vN}}}(\Dnc_{T^vN}\rtimes\gr_f)
\end{equation}
Notice that by construction, $\Dnc_{T^vN}\rtimes\gr_f$ is a groupoid with units $\gr_{f}^{(0)}=f^*T^vN\bigsqcup N\times_M P\times (0,1]$ and a deformation groupoid over $[0,1]$ with fibers 
$(f^*T^vN\oplus f^*T^vN)\rtimes T^vP$ at zero and $T^vN \times_M (P\times_M P)$ over $(0, 1]$.  There is then the associated deformation index:
\begin{equation}\label{line2diagV}
\xymatrix{
K_{\alpha+\o_g+\o_N}^\gr((f^*T^vN\oplus f^*T^vN)\rtimes T^vP)\ar[rdd]&K_{\alpha+\o_g+\o_{\Dnc_{T^vN}}}^\gr(\Dnc_{T^vN}\rtimes\gr_f)\ar[dd]^-{e_1}\ar[l]^-{e_0}_-{\cong}\\&\\
&K_{\alpha+\o_g+\o_N}^\gr(T^vN \times_M (P\times_M P))
}
\end{equation}
For overcome to diagram V, let us consider the map of couples: (remember $P\hookrightarrow N\times_M P$ with $f\times id_P$)
$$(N\times_M P,P)\to (L,L)$$
It induces a map between the deformations
$$\gr_{f}^{(0)}=f^*T^vN\bigsqcup N\times_M P\times (0,1]\to L\times [0,1].$$
We take the pullback of the vector bundle $T^vL\times [0,1]$ over $L\times [0,1]$ by this map, we denote it by $\mathcal{DL}\to \gr_{f}^{(0)}$. There is a canonical action of the  semi-direct product groupoid $\Dnc_{T^vN}\rtimes\gr_f$ on $\mathcal{DL}$, thus giving the respective twisted equivariant Thom isomorphism (modulo Fourier isomorphism  as \ref{diagVa}, \ref{diagVb} and \ref{diagVc} above)
\begin{equation}
K_{\alpha+\o_g+\o_{\Dnc_{T^vN}}}^\gr(\Dnc_{T^vN}\rtimes\gr_f)\stackrel{\mathscr{T}_F}{\longrightarrow}K_{\alpha+\o_g+\o_{\Dnc_{T^vN}}+\o_{\mathcal{DL}}}^\gr(\mathcal{DL}\rtimes(\Dnc_{T^vN}\rtimes\gr_f))
\end{equation}
By construction, $\mathcal{DL}\rtimes(\Dnc_{T^vN}\rtimes\gr_f)$ is a deformation groupoid over $[0,1]$ with fibers 
$\mathcal{L}\rtimes((f^*T^vN\oplus f^*T^vN)\rtimes T^vP)$ at zero and $g^*T^vL\rtimes (T^vN \times_M (P\times_M P))$ out of zero. There is then the associated deformation index:
\begin{equation}\label{line3diagV}
{\tiny
\xymatrix{
K_{\alpha+\o_g+\o_N+\o_L}^\gr(\mathcal{L}\rtimes((f^*T^vN\oplus f^*T^vN)\rtimes T^vP))\ar[rd]&K_{\alpha+\o_g+\o_{\Dnc_{T^vN}}+\o_{\mathcal{DL}}}^\gr(\mathcal{DL}\rtimes(\Dnc_{T^vN}\rtimes\gr_f))\ar[d]^-{e_1}\ar[l]^-{e_0}_-{\cong}\\
&K_{\alpha+\o_g+\o_N+\o_L}^\gr(g^*T^vL\rtimes (T^vN \times_M (P\times_M P)))
}}
\end{equation}

The diagram {\bf V} looks like:

\begin{equation}\label{diagV}
{\tiny 
\xymatrix{
K_{\alpha+\o_g}^\gr(f^*T^vN\rtimes T^vP)\ar[d]_-{\mathscr{T}}\ar@/^2pc/[rr]^-{D_f}&K_{\alpha+\o_g}^\gr(\gr_f)\ar[d]_-{\mathscr{T}}\ar[r]^-{e_1}\ar[l]^-{e_0}_-{\cong}&K_{\alpha+\o_g}^\gr(N\times_M (P\times_M P))\ar[d]^-{\mathscr{T}^m}
\\
K_{\alpha+\o_L}^\gr((f^*T^vN\oplus f^*T^vN))\rtimes T^vP\ar[d]_-{\mathscr{T}_F}&K_{\alpha+\o_L}^\gr(\Dnc_{T^vN}\rtimes\gr_f)\ar[d]_-{\mathscr{T}_F}\ar[r]^-{e_1}\ar[l]^-{e_0}_-{\cong}&
K_{\alpha+\o_L}^\gr(T^vN \times_M (P\times_M P))\ar[d]^-{\mathscr{T}_F^m}
\\
K_{\alpha}^\gr(\mathcal{L}\rtimes((f^*T^vN\oplus f^*T^vN)\rtimes T^vP))&K_{\alpha}^\gr(\mathcal{DL}\rtimes(\Dnc_{T^vN}\rtimes\gr_f))\ar[r]^-{e_1}\ar[l]^-{e_0}_-{\cong}&K_{\alpha}^\gr(g^*T^vL\rtimes T^vN)\times_M(P\times_M P))
}}
\end{equation}
where we have made a simplification of twistings: In the second line the twistings should be in principle those appearing in (\ref{line2diagV}), but notice that the canonical K-orientation of the vector bundle $f^*T^vN\oplus f^*T^vN$ induces an equivalence of twistings between $\o_g+\o_N$ and $\o_L$. Also, in the third line the twistings should be in principle those appearing in (\ref{line3diagV}), but for the same reason as before, $\o_g+\o_N+\o_L$ is canonically trivial.
 
The diagram above is evidently commutative by naturality with respect to morphisms of the twisted equivariant Thom isomorphism.

{\bf Definition and commutativity of diagram VI.}

The first groupoid we need to consider here is the Thom groupoid (\cite{DLN} theorem 6.2) associated to the real vector bundle $f^*T^vN$ over $P$, it consists on taking the tangent groupoid of the fiber product groupoid $f^*T^vN\times_Pf^*T^vN\rightrightarrows f^*T^vN$, it is then given by the deformation groupoid
$$\mathbf{T}_N:=f^*T^vN\oplus f^*T^vN \bigsqcup f^*T^vN\times_Pf^*T^vN\times (0,1]\rightrightarrows f^*T^vN\times [0,1].$$
The groupoid $T^vP$ acts (diagonally) on the Thom groupoid $\mathbf{T}_N$, we consider the semi-direct product groupoid 
$\mathbf{T}_N\rtimes T^vP$. We have as well a crossed product $
(\mathcal{L}\times [0,1])\rtimes (\mathbf{T}_N\rtimes T^vP)$. We can consider the associated  deformation index
\begin{equation}
{\small
\xymatrix{
K_{\alpha}^\gr(\mathcal{L}\rtimes((f^*T^vN\oplus f^*T^vN)\rtimes T^vP))\ar[rdd]&K_{\alpha}^\gr((\mathcal{L}\times [0,1])\rtimes(\mathbf{T}_N\rtimes T^vP))\ar[dd]_-{e_1}
\ar[l]^-{e_0}_-{\cong}\\ &\\
&K_{\alpha}^\gr(\mathcal{L}\rtimes((f^*T^vN\times_P f^*T^vN)\rtimes T^vP))
}}
\end{equation}
Next, consider the following immersion of groupoids
$$(g\circ f)\times f^2\times \Delta :P\rightarrow L\times_M(N\times_M N)\times(P\times_M P),$$
it gives as well a deformation groupoid
$\gr_{(g\circ f,f^2)}$ that induces a deformation index
\begin{equation}
{\small
\xymatrix{
K_\alpha^\gr(\mathcal{L}\rtimes((f^*T^vN\times_P f^*T^vN)\rtimes T^vP))&K_\alpha^\gr(\gr_{(g\circ f,f^2)})\ar[r]_-{e_1}
\ar[l]^-{e_0}_-{\cong}&K_\alpha^\gr(L\times_M(N\times_M N)\times(P\times_M P)).
}}
\end{equation}

Now we consider the deformation groupoid

$$\mathbb{D}:=\mathcal{DL}\rtimes(\Dnc_{T^vN}\rtimes\gr_f)\bigsqcup \gr_{(g\circ f,f^2)}\times (0,1].$$
The fact that the zero component of $\mathcal{DL}\rtimes(\Dnc_{T^vN}\rtimes\gr_f)$, that is $\mathcal{L}\rtimes((f^*T^vN\oplus f^*T^vN)\rtimes T^vP)$, can be glued (by the Lie groupoid $(\mathcal{L}\times [0,1])\rtimes(\mathbf{T}_N\rtimes T^vP)$) with the zero component of $\gr_{(g\circ f,f^2)}$, and the same for any $t\neq 0$ (glued for any such $t$ by the Lie groupoid $\gr_g^m$), tells us that there is a Lie groupoid structure over $\mathbb{D}$ compatible with the smooth structures of the departing groupoids (See \cite{Deb}, or \cite{DLN} section 2 for more details on smooth structures on deformation groupoids).

The diagram {\bf VI} follows from  the following commutative diagram:

\begin{equation}\label{diagVI}
{\small 
\xymatrix{
K_\alpha^\gr(\mathcal{L}\rtimes((f^*T^vN\oplus f^*T^vN)\rtimes T^vP))&K_\alpha^\gr(\mathcal{DL}\rtimes(\Dnc_{T^vN}\rtimes\gr_f))\ar[r]^-{e_1}\ar[l]^-{e_0}_-{\cong}&K_\alpha^\gr((g^*T^vL\rtimes T^vN)\times_M(P\times_M P))\ar@/^3pc/[dd]^-{D_g^m}
\\
K_\alpha^\gr((\mathcal{L}\times [0,1])\rtimes(\mathbf{T}_N\rtimes T^vP))\ar[d]_-{e_1}
\ar[u]^-{e_0}_-{\cong}&K_\alpha^\gr(\mathbb{D})\ar[d]_-{e_1}
\ar[u]^-{e_0}_-{\cong}\ar[r]^-{e_1}\ar[l]^-{e_0}_-{\cong}& K_\alpha^\gr(\gr_g^m)\ar[d]_-{e_1}
\ar[u]^-{e_0}_-{\cong}
\\
K_\alpha^\gr(\mathcal{L}\rtimes((f^*T^vN\times_P f^*T^vN)\rtimes T^vP))
&K_\alpha^\gr(\gr_{(g\circ f,f^2)})\ar[r]^-{e_1}\ar[l]^-{e_0}_-{\cong}&K_\alpha^\gr(L\times_M(N\times_M N)\times_M(P\times_M P))
}}
\end{equation}

{\bf Definition and commutativity of diagram VII.}

The canonical projection of couples
$$
\xymatrix{
L\times_M(N\times_M N)\times_M (P\times_M P)\ar[r]^-\pi&L\times_M (P\times_M P)
\\
&&\\
P\ar[uu]^-{(g\circ f)\times f^2\times \Delta}\ar[r]_-=&P\ar[uu]_-{g\circ f\times \Delta}
}
$$
induces a canonical projection between the deformations groupoids
$$\gr_{(g\circ f,f^2)}\to \gr_{g\circ f}$$ which is a Morita equivalence of groupoids. Fiberwise, the above projection corresponds to the Morita equivalence
$$f^*T^vN\times_Pf^*T^vN\to P$$ at zero, and
$$N\times_M N\to M$$ out of zero.

We have the induced isomorphism in K-theory and the following commutative diagram:

\begin{equation}\label{diagVII}
{\small 
\xymatrix{
K_\alpha^\gr(\mathcal{L}\rtimes((f^*T^vN\times_P f^*T^vN)\rtimes T^vP))\ar[dd]_-m
&K_\alpha^\gr(\gr_{(g\circ f,f^2)})\ar[r]^-{e_1}\ar[l]^-{e_0}_-{\cong}\ar[dd]_-m&K_\alpha^\gr(L\times_M(N\times_M N)\times_M(P\times_M P))\ar[dd]_-m
\\ &&
\\
K_\alpha^\gr(f^*g^*T^vL\rtimes T^vP)
\ar@/_2pc/[rr]_-{D_{g\circ f}}&K_\alpha^\gr(\gr_{g\circ f})\ar[r]^-{e_1}\ar[l]^-{e_0}_-{\cong}&K_\alpha^\gr(L\times_M(P\times_M P))
}}
\end{equation}

{\bf Commutativity of diagram VIII.}

This diagram looks as follows:
\begin{equation}\label{diagVIII}
{\tiny
\xymatrix{
&&&&K_{\alpha+\o_g}^\gr(f^*T^vN\rtimes T^vP)\ar[d]^-{\mathscr{T}}&\\
&K_{\alpha+\o_g+\o_N}^\gr(T^vP)\ar@{}[rrd]^-{{\bf A}}\ar[urrr]^-{\mathscr{T}_F}&&&K_{\alpha+\o_g+\o_N}^\gr((f^*T^vN\oplus f^*T^vN)\rtimes T^vP)\ar[d]^-{\mathscr{T}_F}&\\
K_{\alpha+\o_{g\circ f}}^\gr(P)\ar@{.>}[rrrr]\ar[ur]^-{\mathscr{T}}\ar[dr]_-{\mathscr{T}}&&&&K_\alpha^\gr(\mathcal{L}\rtimes((f^*T^vN\oplus f^*T^vN)\rtimes T^vP))&\\
&K_{\alpha+\o_L}^\gr(T^vP)\ar@{}[rrr]^-{{\bf B}}\ar[dddrrr]_-{\mathscr{T}_F}&&&K_\alpha^\gr((\mathcal{L}\times [0,1])\rtimes(\mathbf{T}_N\rtimes T^vP))\ar[d]^-{e_1}\ar[u]_-{e_0}^-{\cong}&\\
&&&&K_\alpha^\gr(f^*g^*T^vL\rtimes((f^*T^vN\times_P f^*T^vN)\rtimes T^vP))\ar[dd]^-m&\\
&&&&&\\
&&&&K_\alpha^\gr(f^*g^*T^vL\rtimes T^vP)&
}}
\end{equation}
where the morphisms $\mathscr{T}$ are Thom isomorphisms (with a subscript  $F$  if it is modulo Fourier isomorphism as before). 
As visually sketched above we will separate diagram VIII in two diagrams A and B. By proposition \ref{Thomproperties} the arrow that fits the pointed arrow above and that make diagram A commutative is 
\[
{\tiny 
\xymatrix{
K_{\alpha+\o_{g\circ f}}^\gr(P)\ar[r]^-{\mathscr{T}_0}&K_{\alpha}^\gr((f^*g^*T^vL)^*\oplus 
((f^*T^vN)^*\oplus f^*T^vN)\oplus T^vP)\ar[r]^-{\sigma_0}&K_\alpha^\gr((f^*g^*T^vL)^*
\rtimes(((f^*T^vN)^*\oplus f^*T^vN)\rtimes T^vP))\ar[d]^{\mathscr{F}}\\&&K_\alpha^\gr(\mathcal{L}
\rtimes((f^*T^vN^*\oplus f^*T^vN)\rtimes T^vP))
}}
\]
where $\mathscr{T}_0$ is the $\gr$-equivariant Thom isomorphism associated to $$(f^*g^*T^vL)^*\oplus 
((f^*T^vN)^*\oplus f^*T^vN)\oplus T^vP\to P,$$,  $\sigma_0 $ is the 
$\gr$-deformation index of the groupoid $(f^*g^*T^vL)^*\rtimes
((f^*T^vN)^*\oplus f^*T^vN)\rtimes T^vP$ and $\mathscr{F}$ is induced from the obvious Fourier isomorphism. We will denote by $\sigma_{0,F}$ the composition $\mathscr{F}\circ \sigma_0$. For diagram B we have the following decomposition into commutative subdiagrams:
\[
{\tiny
\xymatrix{
K_{\alpha+\o_{g\circ f}}^\gr(P)\ar[rr]^-{\mathscr{T}_0}&&K_{\alpha}^\gr((f^*g^*T^vL)^*\oplus 
((f^*T^vN)^*\oplus f^*T^vN)\oplus T^vP)\ar[rr]^-{\sigma_{0,F}}&&K_\alpha^\gr(\mathcal{L}\rtimes((f^*T^vN\oplus f^*T^vN)\rtimes T^vP))\\
K_{\alpha+\o_{g\circ f}}^\gr(P\times [0,1])\ar[rr]^-{\mathscr{T}_0^{[0,1]}}\ar[d]_-{e_1}^-{\cong}\ar[u]^-{e_0}_-{\cong}&&K_{\alpha}^\gr(((f^*g^*T^vL)^*\oplus ((f^*T^vN)^*\oplus f^*T^vN)\oplus T^vP)\times [0,1])\ar[rr]^-{\sigma_{\mathbf{T},F}}\ar[d]_-{e_1}^-{\cong}\ar[u]^-{e_0}_-{\cong}&&K_\alpha^\gr((\mathcal{L}\times [0,1])\rtimes(\mathbf{T}_N\rtimes T^vP))\ar[d]^-{e_1}\ar[u]_-{e_0}^-{\cong}\\
K_{\alpha+\o_{g\circ f}}^\gr(P)\ar[rr]^-{\mathscr{T}_0}\ar[d]_-{Id}&&K_{\alpha}^\gr((f^*g^*T^vL)^*\oplus ((f^*T^vN)^*\oplus f^*T^vN)\oplus T^vP)\ar[rr]^-{\sigma_{1,F}}\ar[d]_-{\sigma_m}&&K_\alpha^\gr(\mathcal{L}\rtimes((f^*T^vN\times_P f^*T^vN)\rtimes T^vP))\ar[d]^-m\\
K_{\alpha+\o_{g\circ f}}^\gr(P)\ar[rr]_-{\mathscr{T}}&&K_{\alpha}^\gr((f^*g^*T^vL)^*\oplus T^vP)\ar[rr]_-{\sigma_F}&&K_\alpha^\gr(f^*g^*T^vL\rtimes T^vP)
}}
\]
where 
\begin{itemize}
\item $\sigma_\mathbf{T}$ is the $\gr$-deformation index associated to the groupoid $(f^*g^*T^vL)^*\rtimes(\mathbf{T}_N\rtimes T^vP)$. In particular it commutes with the respective $\gr$-deformation indices corresponding to the evaluations at zero and one ($\sigma_0$ and $\sigma_1$). The subscript  
$\mathscr{F}$ above indicates modulo Fourier isomorphism.
\item $\mathscr{T}_0^{[0,1]}$ is the Thom isomorphism associated to the $\gr$-vector bundle $(f^*g^*T^vL\oplus (f^*T^vN\oplus f^*T^vN)\oplus T^vP)\times [0,1] $ over $P\times [0,1]$. In K-theory the evaluations ($e_0$ and $e_1$) from this morphism give the the same morphism $\mathscr{T}_0$.
\item $\sigma_m$ is the composition of the $\gr$-deformation index of the groupoid 
$(f^*g^*T^vL)^*\oplus (f^*T^vN\times_P f^*T^vN)\oplus T^vP$ followed by the morphism induced by the Morita equivalence $(f^*g^*T^vL)^*\oplus (f^*T^vN\times_P f^*T^vN)\oplus T^vP\to (f^*g^*T^vL)^*\oplus T^vP$. The commutativity of the right bottom square is then immediate by construction of the deformation indices. The commutativity of the left bottom square follows from proposition \ref{Thomproperties}, property 3.
\end{itemize}
For finish just let us remark that the Fourier isomorphism part of diagram B above obviously commute with evaluations. Diagram VIII is hence commutative and this ends the proof of the theorem.
\end{proof}

\section{Twisted geometric K-homology}

\subsection{Definition and some computations}

\begin{definition}[Twisted geometric K-homology]
Let $\gr\rightrightarrows M$ be a Lie groupoid with a twisting 
$\alpha:\gr--->PU(H)$. The {\em twisted geometric K-homology group}  associated to 
$(\gr,\alpha)$ is the abelian group denoted by $K_{*}^{geo}(\gr,\alpha)$ with generators given by the cycles $(P,x)$ where
\begin{itemize}
\item[(1)]   $P$ is a smooth  co-compact $\gr$-proper manifold,
\item[(2)]   $\pi_P:P\to M$ is  the smooth momentum map which supposed to be an oriented  submersion, and
\item[(3)] $x\in K^*(P\rtimes \gr, \pi_P^*\alpha+\o_{T^vP})$,
\end{itemize}
and the equivalence relations 
\begin{equation}
(P,x)\sim (P',g_!(x))
\end{equation}
where  $g:P\to P'$ is a smooth $\gr$-equivariant map (in particular $\pi_{P'}\circ g=\pi_P$).
\end{definition}

Next, we perform a computation in an explicit case:

\begin{proposition}\label{propQfinal}
Let $(\gr,\alpha)$ be a twisted groupoid, and let $\mathcal{C}_\gr$ be the category of proper $\gr$-manifolds as above and homotopy classes of smooth 
$\gr$-equivariant maps. Then if $Q$ is a final object for $\mathcal{C}_\gr$ one has an isomorphism
\begin{equation}
\xymatrix{
K_{*}^{geo}(\gr,\alpha)\ar[r]^-{\mu_Q}_-{\cong}& K^*(Q\rtimes \gr, \pi_Q^*\alpha+\o_{T^vQ}).
}
\end{equation}
\end{proposition}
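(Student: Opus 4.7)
The idea is to construct an explicit inverse
\[
\nu: K^*(Q\rtimes \gr, \pi_Q^*\alpha + \o_{T^vQ}) \longrightarrow K_{*}^{geo}(\gr,\alpha), \qquad \nu(\xi) := [(Q, \xi)].
\]
Recall that for any cycle $(P, x)$, finality of $Q$ in $\mathcal{C}_\gr$ yields a unique homotopy class of smooth $\gr$-equivariant maps $f_P: P \to Q$, and $\mu_Q[(P, x)] := (f_P)_!(x)$. I would first verify that $\mu_Q$ is well defined: independence of the representative $f_P$ within its homotopy class is the standard homotopy invariance of shriek maps, obtained by applying the construction to a $\gr$-equivariant homotopy $P\times [0,1] \to Q$ and evaluating at endpoints. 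Compatibility with the equivalence relation defining $K_*^{geo}$ then follows directly from Theorem \ref{twistfun}: if $g: P \to P'$ satisfies $x' = g_!(x)$, then
\[
(f_{P'})_!(x') = (f_{P'})_!\bigl(g_!(x)\bigr) = (f_{P'} \circ g)_!(x) = (f_P)_!(x),
\]
the last equality holding because $f_P$ and $f_{P'} \circ g$ both represent the unique morphism $P \to Q$ in $\mathcal{C}_\gr$.

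The composition $\nu \circ \mu_Q$ is then the identity by the very definition of the equivalence relation on $K_*^{geo}$: the map $f_P: P \to Q$ itself witnesses $(P, x) \sim (Q, (f_P)_!(x))$, so $\nu \circ \mu_Q[(P, x)] = [(Q, (f_P)_!(x))] = [(P, x)]$. For the reverse composition, $\mu_Q \circ \nu(\xi) = (\mathrm{id}_Q)_!(\xi)$, since $\mathrm{id}_Q$ represents the unique morphism in $\mathcal{C}_\gr(Q, Q)$. Observe that source and target twistings of $(\mathrm{id}_Q)_!$ coincide: the twisting $\o_{\mathrm{id}_Q} = \o_{T^vQ \oplus T^vQ}$ is canonically trivialized via the complex structure $J(v, w) := (-w, v)$ on $T^vQ \oplus T^vQ$. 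Consequently, the whole proposition reduces to the identification $(\mathrm{id}_Q)_! = \mathrm{id}$.

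This last identification is the main (indeed, only) nontrivial point. The functoriality of Theorem \ref{twistfun} applied to $\mathrm{id}_Q = \mathrm{id}_Q \circ \mathrm{id}_Q$ forces $(\mathrm{id}_Q)_!$ to be idempotent, so it suffices to show it is an isomorphism, whence it must be the identity. Inspecting the four steps of diagram \eqref{diagpush} for $f = \mathrm{id}_Q$: the twisted equivariant Thom theorem of the appendix guarantees that Steps 1-3 (two Thom isomorphisms linked by the Fourier isomorphism) are isomorphisms; and in Step 4 the deformation index attached to the diagonal immersion $Q \hookrightarrow Q \times_M (Q \times_M Q)$, composed with the Morita isomorphism $Q \times_M Q \sim M$, is seen to be an isomorphism directly from the tangent-groupoid-style construction of $\gr_{\mathrm{id}_Q}$ (the restriction at $t=0$ and the restriction at $t=1$ are both K-theoretic isomorphisms onto $K^*(\gr_{\mathrm{id}_Q})$). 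Hence $(\mathrm{id}_Q)_!$ is an idempotent isomorphism and therefore the identity, which concludes the proof.
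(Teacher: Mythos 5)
Your proposal is correct and follows essentially the same route as the paper: define the inverse $\xi\mapsto[(Q,\xi)]$ and check both composites, with well-definedness of $\mu_Q$ coming from Theorem \ref{twistfun}. The only difference is that you supply justifications (homotopy invariance of the shriek maps, and the idempotency-plus-isomorphism argument for $(\mathrm{id}_Q)_!=\mathrm{id}$) for steps the paper dismisses as obvious, which is a welcome strengthening rather than a deviation.
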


\begin{proof}
Let $(P,x)$ be a geometric cycle over $(\gr,\alpha)$. By hypothesis there is a $\gr$-equivariant map $q_P:P\to Q$ since $Q$ is a final object in $\mathcal{C}_\gr$. We define $\mu_Q$ by
\begin{equation}
\mu_Q([P,x])=(q_Q)_!(x)
\end{equation}
which is well defined by theorem \ref{twistfun} above.

We will explicitly define the inverse. Let $y\in K^*(Q\rtimes \gr,\pi_Q^*\alpha+\o_{T^vQ})$, we define $\beta_Q(y)\in K_{*}^{geo}(\gr,\alpha)$ to be the class of the cycle $(Q,y)$.

In one direction, $\mu_Q(\beta_Q(y))=y$ is obvious, and in the  other direction, 
$\beta_Q(\mu_Q([P,x]))=[Q,(q_P)_!(x)]=[P,x]$.

\end{proof}

\begin{examples}\label{exfinalobj}
\item[(1)] The most basic example in which the last proposition applies is when the groupoid $\gr\rightrightarrows M$ is proper with $M/\gr$ compact. This  covers the  case of orbifold groupoids.
 Then we have  an explicit isomorphism
\begin{equation}
\xymatrix{
K_{*}^{geo}(\gr,\alpha)\ar[r]^-{\mu_Q}_-{\cong}& K^{*}(\gr, \alpha).
}
\end{equation}
\item[(2)] A very interesting example where one can apply the computation above is the following (Connes book \cite{Concg} $10.\beta$): Let $G$ be a connected Lie group and $\alpha:G\to PU(H)$ a projective representation. Let $L$ be a maximal compact subgroup of $G$, by a result of Abels and Borel (\cite{Ab}), the homogeneous space $Q=L\setminus G$ is a final object of $\mathcal{C}_G$. Then there is an explicit isomorphism
\begin{equation}\label{muhomG/L}
\xymatrix{
K^{geo}_{*}(G,\alpha)\ar[r]^-{\mu_{L\setminus G}}_-{\cong}&K^*((L\setminus G)\rtimes G,p^*\alpha+\o_{T_e(L\setminus G)})
}
\end{equation}
where $p: (L\setminus G) \rtimes G\to G$ is the canonical projection.
Note that the action of $G$ on the homogeneous space $L\setminus G$ is transitive, hence the groupoid $L\setminus G\rtimes G$ is transitive as well. Now, we know (proposition 5.14 in \cite{MM}) transitive groupoids are Morita equivalent to Lie groups, and more explicitly one Lie group model could be given by an isotropy group. In our case, it is easy to check that the isotopry group of the class of the identity $(L\setminus G\rtimes G)_{[e]}$ identifies canonically with $L$. Hence, the canonical inclusion $L\hookrightarrow L\setminus G\rtimes G$ given by
$$l\mapsto ([e],l)$$
is a Morita equivalence of groupoids (proposition 5.14 {\it (iv)} in \cite{MM}). Using (\ref{muhomG/L}) and the Morita equivalence just described, we can obtain an isomorphism
\begin{equation}\label{muL}
\xymatrix{
K^{geo}_{*}(G,\alpha)\ar[r]^-{\mu_L}_-{\cong}&K^*(L,i^*\alpha+\o_{T_e(L\setminus G)})
}
\end{equation}
where $i:L\hookrightarrow G$ is the inclusion and where $T_e(L\setminus G)$ is considered as a $L$-vector space. Notice that when $T_e(L\setminus G)$ is even dimensional and the isotropy representation of $L$ on this space lifts to $Spin(T_e(L\setminus G))$, one has that $\o_{T_e(L\setminus G)}$ is equivalent to the trivial twisting. In particular if $\alpha$ is also trivial the right hand side of (\ref{muL}) above is isomorphic to $R(L)$, the representation ring of $L$.
\end{examples}

\subsection{Morita invariance}

\begin{theorem}[Morita invariance of the geometric K-homology]\label{MoritageoK}
Let $\gr$ and $\gr'$ be two Morita equivalent groupoids. Let us denote by $\gr\stackrel{\phi}{--->}\gr'$ the generalized isomorphism. Given  $\alpha':\gr'---> PU(H)$ a twisting, there is an induced isomorphism of groups
\begin{equation}
\xymatrix{
K^{geo}(\gr,\alpha)\ar[r]^-{\phi_*}_-{\cong}&K^{geo}(\gr',\alpha')
}
\end{equation}
where $\alpha:=\alpha'\circ\phi$ is the induced twisting on $\gr$.
\end{theorem}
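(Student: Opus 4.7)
The plan is to define $\phi_*$ at the level of cycles using the bibundle correspondence between $\gr$-manifolds and $\gr'$-manifolds, and then to check well-definedness on equivalence classes by establishing the Morita-naturality of the shriek construction step by step.

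First, given a cycle $(P,x)$ over $(\gr,\alpha)$, let $P_\phi$ denote the bibundle representing $\phi$ and set
$$P' := P \times_{\gr} P_\phi.$$
This is a smooth co-compact $\gr'$-proper manifold whose momentum map $\pi_{P'}: P' \to (\gr')^{(0)}$ is an oriented submersion, and the natural projections realise a Morita equivalence of action groupoids $\Phi: P\rtimes \gr \stackrel{\sim}{\longrightarrow} P'\rtimes \gr'$. Moreover, since $T^vP'$ is the Morita transform of $T^vP$ as an equivariant oriented vector bundle, the orientation twistings correspond, and combined with $\alpha = \alpha'\circ \phi$ we get that the twistings $\pi_P^*\alpha + \o_{T^vP}$ and $\pi_{P'}^*\alpha' + \o_{T^vP'}$ correspond under $\Phi$. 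By the Morita invariance of twisted $K$-theory (implicit in Lemma \ref{etaMorita}) there is a canonical isomorphism
$$\Phi_*: K^*(P\rtimes \gr, \pi_P^*\alpha + \o_{T^vP}) \stackrel{\cong}{\longrightarrow} K^*(P'\rtimes \gr', \pi_{P'}^*\alpha' + \o_{T^vP'}),$$
and I would define $\phi_*([P,x]) := [P', \Phi_*(x)]$.

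The main task is to check that $\phi_*$ respects the equivalence relation. A $\gr$-equivariant smooth map $g:P\to Q$ induces, via $P_\phi$, a canonical $\gr'$-equivariant smooth map $g':P'\to Q'$, and well-definedness amounts to the commutativity of
$$
\xymatrix{
K^*(P\rtimes \gr, \pi_P^*\alpha + \o_{T^vP} + \o_g) \ar[r]^-{\Phi_*}_-{\cong} \ar[d]_-{g_!} & K^*(P'\rtimes \gr', \pi_{P'}^*\alpha' + \o_{T^vP'} + \o_{g'}) \ar[d]^-{g'_!} \\
K^*(Q\rtimes \gr, \pi_Q^*\alpha + \o_{T^vQ}) \ar[r]_-{\Psi_*}^-{\cong} & K^*(Q'\rtimes \gr', \pi_{Q'}^*\alpha' + \o_{T^vQ'}).
}
$$
By the four-step construction (\ref{diagpush}) of the shriek map, this reduces to verifying that each of the two twisted equivariant Thom isomorphisms, the Fourier isomorphism, and the deformation index associated to $g\times\triangle$ is natural under Morita equivalence. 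For the Thom isomorphism this follows from the appendix together with the functoriality of the underlying $C^*$-algebra constructions; for the Fourier isomorphism it is immediate from its explicit description in Proposition 2.12 of \cite{CaWangAdv}. The main technical obstacle, and the heart of the argument, is the Morita-naturality of the deformation index. To obtain it, the natural move is to apply the deformation-to-the-normal-cone functor of Subsection \ref{DCN} to the bibundle $P_\phi$ itself, producing a bibundle between $\gr_g \rtimes \gr$ and $\gr_{g'}\rtimes \gr'$ which realises a Morita equivalence compatible with the twisting $\alpha_g$ of Proposition \ref{deftwistimm} and with the evaluation maps at $t=0$ and $t=1$. Naturality of the short exact sequence (\ref{twistedses}) and of the associated six-term sequence in $K$-theory then yields the required commutativity.

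Finally, applying the same construction to the inverse generalized morphism $\phi^{-1}:\gr'\to\gr$ and to the twisting $\alpha'$ produces a map $(\phi^{-1})_*:K^{geo}(\gr',\alpha')\to K^{geo}(\gr,\alpha)$. Since the fibered product $P_\phi\times_{\gr'}P_{\phi^{-1}}$ is canonically isomorphic to $\gr$ as a $\gr$-$\gr$-bibundle, the composite $(\phi^{-1})_*\circ\phi_*$ sends $[P,x]$ to a cycle canonically equivalent to $[P,x]$, and similarly in the other order. Hence $\phi_*$ is an isomorphism.
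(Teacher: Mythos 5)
Your proposal is correct and follows essentially the same route as the paper: define $\phi_*$ on cycles by $(P,x)\mapsto (P\times_{\gr}P_\phi,\Phi_*(x))$, check compatibility with the shriek-map equivalence relation, and invert via $\phi^{-1}$ using $P_\phi\times_{\gr'}P_{\phi^{-1}}\cong\gr$. The only difference is one of emphasis: the paper's proof of this theorem simply reads off $\phi(g)_!\circ\Phi_*=\Psi_*\circ g_!$ from a commutative square of generalized morphisms, deferring the step-by-step Morita-naturality of the Thom, Fourier and deformation-index constructions (which you rightly identify as the technical heart, via applying the deformation-to-the-normal-cone functor to the bibundle) to its proof of the Morita invariance of the assembly map.
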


\begin{proof}
First of all let us write the generalized isomorphism $\xymatrix{
\gr\ar@{.>}[r]^-{\phi}&\gr'}$ as 
\[
\xymatrix{
\gr \ar@<.5ex>[d]\ar@<-.5ex>[d]&P_\phi \ar@{->>}[ld]_-{t_\phi} \ar[rd]^-{s_\phi}&\gr' \ar@<.5ex>[d]\ar@<-.5ex>[d]\\
 &M&&M'.
}
\]
the Morita bi-bundle giving the Morita equivalence.

{\bf Step 1.  The definition of $\phi_*$:}  We will now describe the morphism $\phi_*$ at the geometric cycle level. Let $(P,x)$ be a geometric cycle over $(\gr,\alpha)$, we will let
\begin{equation}
\phi_*(P,x):=(\phi(P),\phi(x)),
\end{equation}
where
\begin{itemize}
\item $\phi(P):=P\times_\gr P_\phi=P\times_MP_\phi/(p,p')\sim$ with $\sim$ given by $ (p\cdot \gamma,\gamma^{-1}\cdot p')$. 

\item The fact that $\gr$ acts freely and properly on $P_\phi$ on the left and properly on $P$ on the right  implies that $P\times_\gr P_\phi$ has indeed an induced manifold structure. Now, the action of $\gr'$ on $P\times_\gr P_\phi$ with momentum map $\phi(\pi_P):=s_\phi\circ \pi_2$ is defined as:
$$[(p,p')]\cdot \gamma':=[p,p'\cdot \gamma'],$$
which is evidently well defined. Notice that the action is proper since the same is true for the action of $\gr'$ on $P_\phi$ but the action is free if and only if $\gr$ acts freely on $P$. Hence, 
$P\times_\gr P_\phi$ is a $\gr'$-proper manifold.
\item Letting $\pi_2:P\times_\gr P_\phi\to P_\phi$ the second projection, $\pi_{P\times_\gr P_\phi}:=s_\phi\circ \pi_2:P\times_\gr P_\phi\to M'$ is a smooth submersion since $s_\phi$ is also a submersion as $\phi$ is a generalized isomorphism.
\item The element $\phi(x)$: For this purpose, let us consider the inverse Morita bi-bundle
\begin{equation}\label{inversephi}
\xymatrix{
\gr'\ar@{.>}[r]^-{\phi^{-1}}&\gr:&\gr' \ar@<.5ex>[d]\ar@<-.5ex>[d]&P_{\phi^{-1}} \ar@{->>}[ld]_-{t_{\phi^{-1}}} \ar[rd]^-{s_{\phi^{-1}}}&\gr \ar@<.5ex>[d]\ar@<-.5ex>[d]\\
&&M'&&M.
}
\end{equation}
By definition $P_\phi\times_{\gr'}P_{\phi^{-1}}$ is equivalent to the $\gr$-bundle over $\gr$ associated to the identity $\gr\to\gr$ (which has as total space $\gr$ itself), and similarly $P_{\phi^{-1}}\times_{\gr}P_\phi$ is equivalent to $\gr'$ as $\gr'$-bundle over $\gr'$. As an immediate consequence we have the following two bi-bundles between the crossed product groupoids which induce generalized morphisms inverses of each other:
\begin{equation}\label{moritaPrtimesG}
\xymatrix{
P\rtimes\gr\ar@{.>}[r]^-{\phi^P}&(P\times_\gr P_\phi)\rtimes\gr':&P\rtimes\gr  \ar@<.5ex>[d]\ar@<-.5ex>[d]&P\times_M P_\phi \ar@{->>}[ld]_-{\pi_1} \ar[rd]^-{q}&(P\times_\gr P_\phi)\rtimes\gr' \ar@<.5ex>[d]\ar@<-.5ex>[d]\\
&&P&&P\times_\gr P_\phi.
}
\end{equation}
and
\begin{equation}\label{moritaPrtimesGinverse}
\xymatrix{
(P\times_\gr P_\phi)\rtimes\gr'\ar@{.>}[r]^-{(\phi^P)^{-1}}&P\rtimes\gr:&(P\times_\gr P_\phi)\rtimes\gr'  \ar@<.5ex>[d]\ar@<-.5ex>[d]&P\times_{M'} P_{\phi^{-1}} \ar@{->>}[ld]_-{q'} \ar[rd]^-{\pi_1}&P\rtimes\gr \ar@<.5ex>[d]\ar@<-.5ex>[d]\\
&&P\times_\gr P_\phi&&P.
}
\end{equation}
Here $q$ and $q'$ are the obvious projection maps. 
Notice now that we have by definition the following two commutative diagrams of generalized morphisms:
\begin{equation}
\xymatrix{
P\rtimes\gr \ar@{.>}[d]_-{\phi^P} \ar@{.>}[rr]^-{\pi_P}&&\gr\ar@{.>}[rr]^-{\alpha}\ar@{.>}[d]^-{\phi}&&PU(H)\\
(P\times_\gr P_\phi)\rtimes\gr'\ar@{.>}[rr]_-{\pi_{(P\times_\gr P_\phi)}}&&\gr'\ar@{.>}[rru]_-{\alpha'}&&
}
\end{equation}
and 
\begin{equation}
\xymatrix{
P\rtimes\gr \ar@{.>}[drr]_-{\o_{T^vP}}\ar@{.>}[rr]^-{\phi^P}&&(P\times_\gr P_\phi)\rtimes\gr'\ar@{.>}[d]^-{\o_{T^v(P\times_\gr P_\phi)}}\\ &&PU(H)
}
\end{equation}
\end{itemize}

Hence we have an induced Morita equivalence between the respective extensions:
\begin{equation}\label{extphi}
\xymatrix{
R_{\pi_P^*+\o_{T^vP}}\ar@{.>}[r]^-{\tilde{\phi}}& R_{\pi_{\phi(P)}^*\alpha'+\o_{T^v\phi(P)}}
}
\end{equation}

This defines a Morita equivalence between the respective $C^*$-algebras and since it is an equivalence of extensions it preserves in particular the $\mathbb{Z}$-grading (\ref{gradext}). We have then an associated element $\phi(x)\in K(\phi(P)\rtimes \gr',\alpha'+\o_{T^v\phi(P)}).$

\vspace{2mm}

{\bf Step 2.   $\phi_*$ is a well defined morphism:} Consider a $\gr-$equivariant map 
$g:P\to P'$, then by definition $(P,x)\sim (P',g_!(x))$. We let $\phi(g):\phi(P)\to \phi(P')$ the smooth map given by $\phi(g)[p,z]:=[g(p),z]$ that is well defined since $g$ is $\gr$-equivariant. It is also clear $\phi(g)$ is $\gr'$-equivariant. We have then the following commutative diagram of generalized morphisms:
\begin{equation}
\xymatrix{
P\rtimes \gr\ar[r]^-g\ar@{.>}[d]_-{\phi^P}&P'\rtimes \gr\ar@{.>}[d]^-{\phi^{P'}} \\
\phi(P)\rtimes \gr'\ar[r]_-{\phi(g)}&\phi(P')\rtimes \gr'
}
\end{equation}
from which we get that $$\phi(g)!(\phi(x))=\phi(g_!(x))$$
and hence $$\phi_*(P,x)\sim \phi_*(P',g_!(x)),$$
that is, $\phi_*$ is a well defined morphism from $K_{*}^{geo}(\gr,\alpha)$ to 
$K_{*}^{geo}(\gr',\alpha')$.

\vspace{2mm}

{\bf Step 3.  $\phi_*$ is an isomorphism:} Associated to the inverse Morita bi-bundle (\ref{inversephi}) we have an analogously defined morphism $(\phi^{-1})_*$. We will show this is the inverse of $\phi_*$. For this purpose it is enough to check it at the cycle level:
\begin{itemize}
\item By definition 
$$(P\times_\gr P_\phi)\times_{\gr'}P_{\phi^{-1}}\cong P\times_{\gr}(P_\phi \times_{\gr'}P_{\phi^{-1}})\cong P\times_\gr \gr\cong P$$
 as $\gr$-manifolds over $M$.
\item Also, we have
$$\phi^{-1}(\phi(x))=x$$
by using the inverse $\tilde{\phi}^{-1}$ of the extension morphism above (\ref{extphi}).
\end{itemize}
We have then $(\phi^{-1})_*\circ\phi_*=Id_{K_*^{geo}(\gr,\alpha)}$. In a symmetric way we easily verify as well that $\phi_*\circ (\phi^{-1})_*=Id_{K_*^{geo}(\gr',\alpha')}$.
\end{proof}

\section{The twisted Baum-Connes assembly map for Lie groupoids}

\subsection{The assembly map}
We are now ready to state and prove one of the main results of this paper. 
 
 \begin{theorem}\label{twistedassemblymap}
 Let $(P,x)$ be a geometric cycle over $(\gr,\alpha)$. Let $\mu_{\alpha}(P,x)=(\pi_P)_!(x)$ be the element   in $K^*(\gr,\alpha)$. Then
  $\mu_{\alpha}(P,x)$ only depends upon the equivalence class of the twisted cycle  $(P,x)$. Hence we have a well defined assembly map
\begin{equation}
\mu_{\alpha}:K_{*}^{geo}(\gr,\alpha)\longrightarrow K^*(\gr,\alpha). 
\end{equation} 
\end{theorem}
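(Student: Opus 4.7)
The plan is to reduce the statement entirely to the wrong-way functoriality already established in Theorem \ref{twistfun}. The first step is to observe that any equivalence $(P,\xi)\sim(P',\xi')$ is by definition witnessed by a smooth $\gr$-equivariant map $g:P\to P'$ with $\xi'=g_!(\xi)$. Such a $g$ automatically satisfies $\pi_{P'}\circ g=\pi_P$, because a $\gr$-equivariant map of $\gr$-manifolds intertwines the momentum maps to $M$. This yields a composition
\[
P\xrightarrow{\,g\,}P'\xrightarrow{\,\pi_{P'}\,}M
\]
of $\gr$-equivariant oriented maps between $\gr$-manifolds, which is precisely the setup of Theorem \ref{twistfun}.

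Before invoking functoriality I would verify the twisting bookkeeping. The cycle $\xi$ lies in $K^*(P\rtimes\gr,\pi_P^*\alpha+\o_{T^vP})$, whereas $\pi_{P'!}\circ g_!$ receives its input in $K^*(P\rtimes\gr,\pi_P^*\alpha+g^*\o_{T^vP'}+\o_{g^*T^vP'\oplus T^vP})$. Using $\o_{E\oplus F}=\o_E+\o_F$ and the canonical triviality of $\o_{E\oplus E}$ (coming from the natural complex structure, hence canonical $\mathrm{Spin}^c$-structure, on $E\oplus E$), these two twistings are canonically equivalent. Thus $\pi_{P'!}(g_!(\xi))$ is defined, and Theorem \ref{twistfun} applied to the pair $(g,\pi_{P'})$ yields
\[
\mu_\alpha(P,\xi)=\pi_{P!}(\xi)=(\pi_{P'}\circ g)_!(\xi)=\pi_{P'!}(g_!(\xi))=\pi_{P'!}(\xi')=\mu_\alpha(P',\xi'),
\]
which is exactly the well-definedness statement.

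Finally, I would upgrade $\mu_\alpha$ to a group homomorphism. The natural group law on $K^{geo}_*(\gr,\alpha)$ is given by disjoint union of the $\gr$-manifold component together with the direct sum of K-theory classes on $(P\sqcup P')\rtimes\gr$. Since every ingredient in the construction of the shriek map of Section \ref{pushsection}---the twisted equivariant Thom isomorphism, the Fourier isomorphism of Step 3, and the deformation index associated with $\gr_f$---is manifestly compatible with disjoint unions at the level of twisted $C^*$-algebras, $\pi_{P!}$ is additive, and hence so is $\mu_\alpha$.

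The only genuinely delicate point is the twisting identification in the second paragraph; once the functoriality of Theorem \ref{twistfun} is granted, no new conceptual input is required, and the proof becomes a formal consequence of that theorem together with the canonical equivalences of orientation twistings already exploited in its proof.
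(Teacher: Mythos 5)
Your argument is correct and follows exactly the route the paper takes: the paper's proof is the single sentence that well-definedness follows from the wrong-way functoriality of Theorem \ref{twistfun}, applied to the composition $P\xrightarrow{g}P'\xrightarrow{\pi_{P'}}M$. Your additional bookkeeping — the identification $\o_{g^*T^vP'}+\o_{g^*T^vP'\oplus T^vP}\simeq\o_{T^vP}$ via the canonical triviality of the orientation twisting of a doubled bundle, and the additivity under disjoint union — is exactly the content left implicit in the paper, so there is nothing to add.
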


\begin{proof}
It follows from the functoriality for proper $\gr$-manifolds, theorem \ref{twistfun} above.
\end{proof}

The following result is an easy consequence of proposition \ref{propQfinal}:

\begin{corollary}
Let $(\gr,\alpha)$ be a twisted groupoid, and let $\mathcal{C}_\gr$ be the category of proper $\gr$-manifolds as above and homotopy classes of smooth 
$\gr$-equivariant maps. Then if $Q$ is a final object for $\mathcal{C}_\gr$ with momentum map $\pi_Q:Q\to M$, one has the following commutative diagram
\begin{equation}
\xymatrix{
K_{*}^{geo}(\gr,\alpha)\ar[rd]_-{\mu_\alpha}\ar[rr]^-{\mu_Q}_-{\cong}&& K^*(Q\rtimes \gr, \pi_Q^*\alpha+\o_{T^vQ})\ar[ld]^-{(\pi_Q)_!}\\
&K^*(\gr,\alpha)&
}
\end{equation}
\end{corollary}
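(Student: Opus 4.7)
The plan is to unravel the definitions of both compositions in the triangle and reduce commutativity to the wrong way functoriality already established in Theorem \ref{twistfun}. Fix a geometric cycle $(P,x)$ representing a class in $K_{*}^{geo}(\gr,\alpha)$. Since $Q$ is a final object of $\mathcal{C}_\gr$, there exists a smooth $\gr$-equivariant map $q_P \colon P \to Q$, unique up to $\gr$-equivariant homotopy. Because $q_P$ is $\gr$-equivariant and both $\pi_P$ and $\pi_Q$ are the momentum maps valued in $M$, we have the factorization $\pi_P = \pi_Q \circ q_P$.

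Now compute the two sides of the triangle. By the definition given in the proof of Proposition \ref{propQfinal}, one has
\begin{equation*}
\mu_Q([P,x]) = (q_P)_!(x) \;\in\; K^*(Q\rtimes\gr,\pi_Q^*\alpha+\o_{T^vQ}),
\end{equation*}
so $(\pi_Q)_! \circ \mu_Q([P,x]) = (\pi_Q)_!\bigl((q_P)_!(x)\bigr)$. On the other hand, by the definition of the assembly map from Theorem \ref{twistedassemblymap}, we have $\mu_\alpha([P,x]) = (\pi_P)_!(x)$. Therefore commutativity of the triangle is equivalent to the identity
\begin{equation*}
(\pi_P)_! = (\pi_Q)_! \circ (q_P)_!
\end{equation*}
as morphisms $K^*(P\rtimes\gr,\pi_P^*\alpha+\o_{T^vP}) \to K^*(\gr,\alpha)$.

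This identity is precisely the statement of the wrong way functoriality Theorem \ref{twistfun} applied to the composable pair of smooth oriented $\gr$-equivariant maps $P \xrightarrow{q_P} Q \xrightarrow{\pi_Q} M$, with the twisting $\alpha$ on $\gr$ (viewed as a $\gr$-manifold $M$). One checks that the relevant orientation twisting $\o_{q_P}+\o_{\pi_Q}$ matches $\o_{\pi_P}=\o_{T^vP}$ via the canonical identification $q_P^*T^vQ\oplus T^v(P/Q)\cong T^vP$ coming from the short exact sequence of vertical tangent bundles, modulo the homotopy of twistings used in Step 1 of Section \ref{pushsection}. I expect no serious obstacle here: the bookkeeping of twistings along the composition is exactly the one handled in the proof of Theorem \ref{twistfun}, and the fact that $\mu_\alpha([P,x])$ depends only on the equivalence class (which is what Theorem \ref{twistedassemblymap} already guarantees) ensures that the argument does not depend on the choice of $q_P$. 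The mildest subtlety is verifying that the factorization of momentum maps is compatible at the level of vertical tangent bundles so that the orientation twisting $\o_{T^vQ}$ appearing in the target of $\mu_Q$ is exactly the one required as input to $(\pi_Q)_!$; this is immediate from the construction.
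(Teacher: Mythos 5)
Your proposal is correct and follows exactly the route the paper intends: the paper states this corollary as ``an easy consequence of Proposition \ref{propQfinal}'' without writing out details, and those details are precisely your computation --- $\mu_Q([P,x])=(q_P)_!(x)$, $\mu_\alpha([P,x])=(\pi_P)_!(x)$, the factorization $\pi_P=\pi_Q\circ q_P$, and Theorem \ref{twistfun} applied to $P\xrightarrow{q_P}Q\xrightarrow{\pi_Q}M$ (with $\o_{\pi_P}=\o_{T^vP}$ and $\o_{\pi_Q}=\o_{T^vQ}$ since $T^vM=0$). No gaps; your remark on the twisting bookkeeping is the only point needing care and you handle it correctly.
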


We can discuss the consquence of the last corollary for the two examples treated in \ref{exfinalobj}:

\begin{examples}
\item[(1)] For the case of a proper groupoid $\gr\rightrightarrows M$ with $M/\gr$ compact we have an isomorphism of the assembly map. Indeed, in this case $M$ itself is a final object for $\mathcal{C}_\gr$ and the assembly becomes simply $\mu_M$ which was explicitly shown to be an isomorphism in proposition \ref{propQfinal}.
\item[(2)] Take again $G$ to be a connected Lie group and $\alpha:G\to PU(H)$ a projective representation. Let $L$ be a maximal compact subgroup of $G$. Putting together the assembly map and the discussion in \ref{exfinalobj} above, we have a commutative diagram
\begin{equation}
\xymatrix{
K_{*}^{geo}(G,\alpha)\ar[rd]_-{\mu_\alpha}\ar[rr]^-{\mu_L}_-{\cong}&& K^*(L, i^*\alpha+\o_{T_e(L\setminus G)})\ar[ld]^-{i_!}\\
&K^*(G,\alpha)&
}
\end{equation}
where $i:L\hookrightarrow G$ is the inclusion morphism. In the case $\alpha$ and $\o_{T_e(L\setminus G)}$ are trivial, the above diagram gives a meaning to Mackey's observations on unitary representations for Lie groups, at least in the case where the assembly map is an isomorphism. In the twisted case there should also be a relation between the projective representations of some Lie groups and certain related semi-direct product group's projective representations\footnote{By Thom isomorphism $K^*(L, i^*\alpha+\o_{T_e(L\setminus G)})\cong K^*(T_e(L\setminus G)\rtimes L,i^*\alpha)$}. We will leave this very interesting subject of study for further works.
\end{examples}

\subsection{Morita invariance of the assembly map}

\begin{theorem}[Morita invariance of the assembly map]\label{MoritaAS}
Let $\gr$ and $\gr'$ be two Morita equivalent groupoids. Let us denote by $\gr\stackrel{\phi}{--->}\gr'$ the generalized isomorphism (the Morita bi-bundle). Given  $\alpha':\gr'---> PU(H)$ a twisting, there is a commutative diagram
\begin{equation}
\xymatrix{
K^{geo}(\gr,\alpha)\ar[r]^-{\phi_*}_-{\cong}\ar[d]_-{\mu_\alpha}&K^{geo}(\gr',\alpha')\ar[d]^-{\mu_\alpha'}\\
K^*(\gr,\alpha)\ar[r]_-{\phi_*}^-{\cong}&K^*(\gr',\alpha')
}
\end{equation}
where $\alpha:=\alpha'\circ\phi$ is the induced twisting on $\gr$.
\end{theorem}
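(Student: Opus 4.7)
The proof reduces, by the definitions of $\mu_\alpha$, $\mu_{\alpha'}$, of $\phi_*$ on geometric cycles (from Theorem~\ref{MoritageoK}) and of $\phi_*$ on twisted $K$-theory (from Lemma~\ref{etaMorita} applied to the Morita equivalence of $S^1$-central extensions induced by $\phi$), to showing that for every geometric cycle $(P,x)$ over $(\gr,\alpha)$ one has
\[
\phi_*\bigl(\pi_P!(x)\bigr) = \pi_{\phi(P)}!\bigl(\phi(x)\bigr)
\]
in $K^*(\gr',\alpha')$, with $\phi(P)=P\times_\gr P_\phi$ and $\phi(x)$ as in the proof of Theorem~\ref{MoritageoK}. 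This identity is itself a special case of a general naturality statement that I propose to prove first: for every $\gr$-equivariant oriented map $f:P\to N$ between $\gr$-manifolds, and the induced $\gr'$-equivariant map $\phi(f):\phi(P)\to\phi(N)$, $\phi(f)[p,p']:=[f(p),p']$, the square
\[
\xymatrix{
K^*(P\rtimes\gr,\alpha+\o_f)\ar[rr]^-{(\phi^P)_*}\ar[d]_-{f_!}&&K^*(\phi(P)\rtimes\gr',\alpha'+\o_{\phi(f)})\ar[d]^-{\phi(f)_!}\\
K^*(N\rtimes\gr,\alpha)\ar[rr]_-{(\phi^N)_*}&&K^*(\phi(N)\rtimes\gr',\alpha')
}
\]
commutes. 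Applied to $f=\pi_P:P\to M$, with $\phi(\pi_P)=\pi_{\phi(P)}$ and $(\phi^M)_*$ equal to the Morita isomorphism on $K^*(\gr,\alpha)$, this yields the theorem.

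To prove the Morita naturality of $f_!$, the plan is to traverse the four-step construction in Section~\ref{pushsection} and check compatibility at each step. The key observation is that the Morita bi-bundle $P_\phi$ transports every geometric ingredient. By functoriality of $T^v(\cdot)$, of fiber products $(\cdot)\times_M(\cdot)$, of semi-direct products, and of the deformation to the normal cone (the functor~(\ref{fundnc})), the Morita equivalence $\phi^P$ of~(\ref{moritaPrtimesG}) induces canonical Morita equivalences between $T^vP\rtimes\gr$ and $T^v\phi(P)\rtimes\gr'$, between $(f^*T^vN)^*\rtimes(T^vP\rtimes\gr)$ and $(\phi(f)^*T^v\phi(N))^*\rtimes(T^v\phi(P)\rtimes\gr')$, and between the deformation groupoids $\gr_f\rtimes\gr$ and $\gr_{\phi(f)}\rtimes\gr'$; the last one restricts at $t=0$ and $t\neq 0$ to the previous two. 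Each such Morita equivalence is by construction the pushforward of $\phi^P$ under the corresponding geometric operation, so by Lemma~\ref{etaMorita} it intertwines the relevant pullback twistings.

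Granted these transported Morita equivalences, commutativity at each step follows from three basic naturality properties. First, the twisted equivariant Thom isomorphism used in Steps~1 and~2 is natural under Morita equivalences of base groupoids (established in the appendix). Second, the Fourier isomorphism~(\ref{FourierStep3map}) is defined fiberwise and is therefore manifestly natural. Third, the deformation index of Step~4 is natural: a Morita equivalence of deformation groupoids over $[0,1]$ that restricts to Morita equivalences at each slice induces a Morita equivalence of the short exact sequences~(\ref{twistedses}), and the induced maps in $K$-theory commute with the connecting evaluation morphisms by functoriality of the six-term sequence. Chaining these three ingredients along the diagram~(\ref{diagpush}) defining $f_!$ yields the desired Morita naturality square.

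The main obstacle is essentially bookkeeping: at each step one must identify the Morita bi-bundle between the two relevant groupoids explicitly as the canonical transform of $P_\phi$ under the corresponding geometric construction, and verify that the orientation twistings $\o_f$, $\o_{f^*T^vN}$, $\o_{\mathcal{L}}$ match correctly under this bi-bundle. These identifications are forced by functoriality of $T^v(\cdot)$ and of the deformation to the normal cone with respect to generalized morphisms, but juggling every twisting simultaneously is delicate. The cleanest way to organize the verification is to reproduce the scaffolding of the proof of Theorem~\ref{twistfun}, replacing each entry of that diagram by its Morita transform under $\phi$ and invoking Lemma~\ref{etaMorita} to match twistings on corresponding faces of the resulting prism.
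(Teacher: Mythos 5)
Your proposal is correct and follows essentially the same route as the paper: the paper also verifies commutativity by tracing the construction of $\pi_P!$ step by step, using the Morita naturality of the twisted equivariant Thom isomorphism (property 1 of Proposition \ref{Thomproperties}) for the Thom step, the explicitly transported Morita bi-bundle between $\gr_f\rtimes\gr$ and $\gr_{\phi(f)}\rtimes\gr'$ (compatible with the evaluations $e_0$, $e_1$) for the deformation-index step, and the compatibility of the final projection $\mu$ with $\phi$. The only cosmetic difference is that you state the naturality square for a general $\gr$-equivariant map $f:P\to N$ (hence also treat Steps 2--3), whereas the paper specializes directly to $f=\pi_P$, for which those steps collapse.
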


\begin{proof}
Let $(P,x)$ be a geometric cycle over $(\gr,\alpha)$. We will be using the notations and terminologies of theorem \ref{MoritageoK} and its proof. In particular see the induced geometric cycle $(\phi(P),\phi(x)))$ over $(\gr',\alpha')$ in the proof. 
It suffices  to prove that the following diagram is commutative
\begin{equation}
\xymatrix{
K^*(P\rtimes \gr,\alpha+\o_{T^vP})\ar@{}[rd]|-{{\bf I}}\ar[d]_-{\cT}^-{\cong}
\ar[r]^-{\phi_*}_-{\cong}&K^*(\phi(P)\rtimes \gr',\alpha'+\o_{T^v\phi(P)})\ar[d]_-{\cT}^-{\cong}
\\
K^*(T^vP\rtimes \gr,\alpha)\ar@{}[rd]|-{{\bf II}}\ar[r]^-{\phi_*}_-{\cong}&
K^*(T^v\phi(P)\rtimes \gr',\alpha')
\\
K^*(\gr_f\rtimes \gr,\alpha_f)\ar@{}[rd]|-{{\bf III}}
\ar[u]^-{e_0}_-{\cong}\ar[d]_-{e_1}
\ar[r]^-{\phi_*}_-{\cong}&K^*(\gr_{\phi(f)}\rtimes \gr',\alpha_{\phi(f)})\ar[u]^-{e_0}_-{\cong}\ar[d]_-{e_1}
\\
K^*((P\times_MP)\rtimes \gr,\alpha\circ\mu)\ar@{}[rd]|-{{\bf IV}}\ar[r]^-{\phi_*}_-{\cong}\ar[d]_-{\mu_*}^-{\cong}&K^*(
(\phi(P)\times_{M'}\phi(P))\rtimes \gr',\alpha'\circ \mu)\ar[d]_-{\mu_*}^-{\cong}
\\
K^*(\gr,\alpha)\ar[r]^-{\phi_*}_-{\cong}&K^*(\gr',\alpha')
}
\end{equation}
where we are denoting by $\phi_*$ the isomorphisms induced by the Morita equivalences coming naturally from $\phi$, and $\gr_f$ and  $\gr_{\phi(f)}$ are the deformation groupoids associated to $f=\pi_P: P\to M$ and $\phi(f)
= \pi_{P\times_\gr P_\phi}: \phi(P) \to M'$ respectively. We now describe them in a more explicit way as below. 

\begin{itemize}
\item Commutativity of diagram I above: The commutativity follows from \ref{Thomproperties} applied to $E=T^vP$.
\item Commutativity of diagram II and III above: we explicitly described in (\ref{moritaPrtimesG}) and (\ref{moritaPrtimesGinverse}) the Morita bi-bundle between $P\rtimes \gr$ and $\phi(P)\rtimes \gr'$ and in a complete analogous way it is possible to describe the Morita equivalences between $T^vP\rtimes \gr$ and $T^v\phi(P)\rtimes \gr'$, between $\gr_f\rtimes \gr$ and $\gr_{\phi(f)}\rtimes \gr'$ and between $(P\times_MP)\rtimes \gr$ and $\phi(P)\times_{M'}\phi(P))\rtimes \gr'$. In fact, the Morita bi-bundle between $\gr_f\rtimes \gr$ and $\gr_{\phi(f)}\rtimes \gr'$ is simply given by
\begin{equation}\label{moritaGfrtimesG}
\xymatrix{
\gr_f\rtimes\gr\ar@{.>}[r]^-{\phi_{\gr_f}}&(\gr_f\times_\gr P_\phi)\rtimes\gr':&\gr_f\rtimes\gr  \ar@<.5ex>[d]\ar@<-.5ex>[d]&\gr_f\times_M P_\phi \ar@{->>}[ld]_-{\pi_1} \ar[rd]^-{q}&(\gr_f\times_\gr P_\phi)\rtimes\gr' \ar@<.5ex>[d]\ar@<-.5ex>[d]\\
&&\gr_f&&\gr_f\times_\gr P_\phi.
}
\end{equation}
Exactly as (\ref{moritaPrtimesG}) and (\ref{moritaPrtimesGinverse}), $\phi_{\gr_f}$ is an invertible Hilsum-Skandalis morphism. By construction, it is compatible with the restrictions to $T^vP\rtimes \gr$ and to $(P\times_MP)\rtimes \gr$, in other words, we have the following two commutative diagrams of generalized morphisms:
\begin{equation}
\xymatrix{
\gr_f\rtimes \gr \ar@{.>}[r]^-{\phi_{\gr_f}}&\gr_{\phi(f)}\rtimes \gr'&and&\gr_f\rtimes \gr \ar@{.>}[r]^-{\phi_{\gr_f}}&\gr_{\phi(f)}\rtimes \gr'\\ 
T^vP\rtimes \gr \ar[u]^-{i_0}\ar@{.>}[r]_-{\phi_{T^vP}}&T^v\phi(P)\rtimes \gr'\ar[u]_-{i_0}&&(P\times_MP)\rtimes \gr \ar[u]^-{i_1}\ar@{.>}[r]_-{\phi_{P\times_MP}}&(\phi(P)\times_{M'}\phi(P))\rtimes \gr'\ar[u]_-{i_1},
}
\end{equation}
from which the commutativity of diagrams II and III follows immediately.
\item Commutativity of diagram IV above: the following diagram of generalized isomorphisms, where $\mu$ stand for the canonical projections, is commutative
\begin{equation}
\xymatrix{
(P\times_MP)\rtimes \gr \ar[d]_-{\mu}\ar@{.>}[rr]^-{\phi_{P\times_MP}}&&(\phi(P)\times_{M'}\phi(P))\rtimes \gr'\ar[d]_-{\mu}
\\
\gr \ar@{.>}[rr]_-{\phi}&&\gr'
}
\end{equation}
It implies the commutativity of diagram IV.
\end{itemize}

\end{proof}

\section{Comparison with the classic assembly maps}

\subsection{The twisted geometric assembly map as the $S^1$-invariant part of the "classic" geometric assembly map}

The definition of the twisted geometric K-homology groups is drawn from Connes definition (\cite{Concg} II.10.$\alpha$) for general Lie groupoids. 


Now, given a twisted Lie groupoid $(\gr,\alpha)$ we can consider the associated $S^1$-central extension $R_\alpha$ for which we have the geometric assembly map for the Lie groupoid $R_\alpha$, as a Lie groupoid with trivial twisting:
\begin{equation}
\mu_{R_\alpha}:K^{geo}(R_\alpha)\to K^*(R_\alpha)
\end{equation}

We have the following proposition:

\begin{proposition}\label{gradprop}
With the same notations as above we have an isomorphism of groups
\begin{equation}
\bigoplus_{n\in \mathbb{Z}}K^{geo}_*(\gr,n\alpha)\cong K^{geo}_{*}(R_\alpha)
\end{equation}
and under this isomorphism
\begin{equation}
\bigoplus_{n\in \mathbb{Z}}\mu_{n\alpha}= \mu_{R_\alpha}
\end{equation}
\end{proposition}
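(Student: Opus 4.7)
My approach is to mirror, at the level of geometric K-homology, the $\mathbb{Z}$-gradation $C^*(R_\alpha)\cong\bigoplus_n C^*(\gr,n\alpha)$ coming from the central $S^1$-action on $R_\alpha$ (Remark \ref{decomposition}), and to verify that the shriek-map construction of Section \ref{pushsection} is strictly compatible with this gradation; both claims in the proposition then follow essentially formally by pasting these two facts together.

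I would first define, for each $n\in\mathbb{Z}$, a map $\Phi_n:K^{geo}_*(\gr,n\alpha)\to K^{geo}_*(R_\alpha)$ at the level of cycles by sending $(P,x_n)$ to $(\tilde P,\tilde x_n)$, where $\tilde P:=P\times_M R_\alpha^{(0)}$ is the natural $R_\alpha$-proper lift of $P$. Because the momentum map of $\tilde P$ lands in the units $R_\alpha^{(0)}$ and the central $S^1\subset R_\alpha$ lies in the isotropy over every unit, the $S^1$ acts trivially on $\tilde P$; consequently the Fell-bundle description yields a splitting $C^*(\tilde P\rtimes R_\alpha,\o_{T^v\tilde P})\cong\bigoplus_m C^*(\tilde P\rtimes\gr_\Omega,m\alpha+\o_{T^v\tilde P})$ whose summands are Morita equivalent to $C^*(P\rtimes\gr,m\alpha+\o_{T^vP})$ by Lemma \ref{etaMorita}. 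I then let $\tilde x_n$ be the image of $x_n$ in the $n$-th summand. Well-definedness with respect to the equivalence relation on cycles is straightforward: a $\gr$-equivariant $g:P\to P'$ lifts functorially to an $R_\alpha$-equivariant $\tilde g:\tilde P\to\tilde P'$, and I would show that $\tilde g_!$ preserves the graded pieces and intertwines with $g_!$ on each $n$-th summand.

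The key technical step is to check that every ingredient entering the construction of the shriek map in Section \ref{pushsection}---the twisted equivariant Thom isomorphism, the Fourier isomorphism \eqref{FourierStep3map}, and each deformation index of Subsection \ref{propimm}---is built from generalized morphisms and deformation-to-the-normal-cone diagrams that are intrinsically $S^1$-equivariant for the central circle of $R_\alpha$. Granting this, the full shriek map $\pi_{\tilde P\,!}$ at the level of $R_\alpha$ decomposes as $\bigoplus_n \pi_{P\,!}^{(n)}$, where $\pi_{P\,!}^{(n)}$ is the twisted shriek for $n\alpha$; applied to $\tilde x_n$ this yields $\mu_{R_\alpha}(\Phi_n(P,x_n))=\mu_{n\alpha}(P,x_n)$ placed in the $n$-th summand of the gradation of $K^*(R_\alpha)$, which is the second statement of the proposition.

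Finally, to prove that $\Phi=\bigoplus_n\Phi_n$ is an isomorphism I would construct an inverse by first using the geometric equivalence relation to replace an arbitrary $R_\alpha$-proper cycle $(\tilde P,\tilde x)$ by one whose underlying manifold is of the form $P\times_M R_\alpha^{(0)}$, and then decomposing $\tilde x$ along the Fell-bundle gradation of $C^*(\tilde P\rtimes R_\alpha,\o_{T^v\tilde P})$ and applying Morita equivalence summand by summand. I expect the main obstacle to be precisely this reduction step, since the central $S^1$ may act non-trivially on a general $R_\alpha$-proper $\tilde P$; I would approach it by exhibiting an explicit $R_\alpha$-equivariant map from $\tilde P$ to a cycle of the desired form (for instance via a submersion onto $(\tilde P/S^1)\times_M R_\alpha^{(0)}$, with standard modifications if the $S^1$-action fails to be free) and checking, via the gradation compatibility of the shriek map established in the previous paragraph, that the pushforward of $\tilde x$ lands in the image of $\Phi$.
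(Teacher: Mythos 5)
Your proposal is correct and follows essentially the same route as the paper: your lift $\tilde P=P\times_M R_\alpha^{(0)}$ is exactly the paper's $P_\Omega$, your splitting of $C^*(\tilde P\rtimes R_\alpha)$ via the trivially-acting central $S^1$ is the paper's identification $P_\Omega\rtimes R_\alpha\cong R_{\alpha\circ\pi_P}$ combined with the gradation of Remark \ref{decomposition}, and the compatibility of the shriek construction with that gradation plays the same role in both arguments. For the inverse direction the paper likewise passes to $Z/S^1$ (and then collapses the cover to recover a $\gr$-manifold $P$ with $Z\rtimes R_\alpha\cong R_{\alpha\circ\pi_P}$), handling the point you flag as the main obstacle by asserting, with a reference to \cite{TXL}, that the central $S^1$ acts freely and properly on a proper $R_\alpha$-manifold, rather than by the explicit modifications you envisage.
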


\begin{proof}
We will first describe a morphism
\begin{equation}
K^{geo}_*(\gr,n\alpha)\longrightarrow K^{geo}_{*}(R_\alpha).
\end{equation}
for every $n\in \mathbb{Z}$.
Let $[P,x]\in K^{geo}_*(\gr,n\alpha)$. By using the Thom isomorphism we might assume $\o_{T^vP}$ is a trivial twisting. Next consider the pullback diagram
$$\xymatrix{
P_\Omega\ar[r]\ar[d]&P\ar[d]^-f\\
\sqcup_{i\in I}\Omega_i\ar[r]&M
}
$$
that is, $P_\Omega=\{((x,i),p):f(p)=x\}$. We can consider $P_\Omega$ as a $R_\alpha$-manifold with the following action
$$((x,i),p)\cdot ((i,\gamma,j),u):=((s(\gamma),j),\gamma\cdot p),$$
where $\gamma\cdot p$ is the respective action of $\gamma\in \gr$ on $p\in P$ (for that we need $f(p)=t(\gamma)$). Because $P$ is a $\gr$-proper manifold it follows immediately that $P_\Omega$ is a $R_\alpha$-proper manifold. It is easy now to verify that the respective crossed product groupoid, $P_\Omega\rtimes R_\alpha$, can be identified as the $S^1$-central extension associated to the twisted groupoid $(P\rtimes\gr,\alpha\circ \pi_P)$, that is,  
$$P_\Omega\rtimes R_\alpha=R_{\alpha\circ \pi_P}.$$
Hence, 
$$K^*(P_\Omega\rtimes R_\alpha)\cong \bigoplus_nK^*(P\rtimes\gr,(\alpha\circ\pi_P)^n)$$ and we can associate to our $x\in K^*(P\rtimes\gr,n\alpha)$ the respective element in $K^*(P_\Omega\rtimes R_\alpha)$ which we denote also by $x$. We have then a natural morphism $[P,x]\mapsto [P_\Omega,x]$ from $K^{geo}_*(\gr,n\alpha)$ to $K^{geo}_{*}(R_\alpha)$ which is again well defined by wrong way functoriality. Thus, we obtain a morphism
\begin{equation}
\bigoplus_{n\in\mathbb{Z}}K^{geo}_*(\gr,n\alpha)\longrightarrow K^{geo}_{*}(R_\alpha).
\end{equation}
that satisfies by construction:
\begin{itemize}
\item It is injective and
\item it fits in the following commutative diagram
\begin{equation}
\xymatrix{
\bigoplus_{n\in\mathbb{Z}}K^{geo}_*(\gr,n\alpha)\ar[r]\ar[dr]_-{\bigoplus_{n\in \mathbb{Z}}\mu_{n\alpha}}&K^{geo}_{*}(R_\alpha)\ar[r]^-{\mu_{R_\alpha}}&K^*(R_\alpha)\ar[dl]^-{\cong}\\
&\bigoplus_{n\in\mathbb{Z}}K^*(\gr,n\alpha).&
}
\end{equation}
\end{itemize}
The surjectivity is as follows: Let $Z$ be a proper $R_\alpha$-manifold and $y\in K^*(Z\rtimes R_\alpha)$ (we can assume again, modulo the Thom isomorphism, $\o_{T^vZ}$ trivial). Consider the smooth manifold $X:=Z/S^1$ resulted from the canonical free and proper action of $S^1$ on $Z$ (explained for instance in \cite{TXL} section 2.2 page 850), there is an associated proper action of $\gr_\Omega$ on $X$ where $\Omega$ is the open cover associated with $\alpha$. Now, taking the canonical projection $X\stackrel{\pi}{\longrightarrow}\sqcup \Omega_i$ we can consider $P:=X/\sim$ with $x\sim y$ iff $\pi(x)=\pi(y)$, then $P$ is a smooth manifold, there is a projection $P\stackrel{\pi_P}{\longrightarrow}M$ and there is an induced proper action of $\gr$ on $P$. We can now easily identify the following two crossed product groupoids
$$X\rtimes \gr_\Omega\cong (P\rtimes \gr)_{\pi_P^{-1}(\Omega)}$$
and hence we also have an identification between the respective $S^1$-central extensions:
$$Z\rtimes R_\alpha\cong R_{\alpha\circ\pi_P}.$$
Thus, under these identifications, $K^*(Z\rtimes R_\alpha)=K^*(R_{\alpha\circ\pi_P})=\bigoplus_{n\in \mathbb{Z}}K^*(P\rtimes \gr, (\alpha\circ\pi_P)^n)$
from where the surjectivity follows.

\end{proof}

\begin{corollary}\label{corextiso}
$\mu_{n\alpha}$ is an isomorphism $\forall n\in \mathbb{Z}$ if and only if $\mu_{R_\alpha}$ is an isomorphism. In particular the geometric twisted assembly map is an isomorphism whenever the geometric assembly map for the correspondent extension is.
\end{corollary}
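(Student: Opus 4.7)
The plan is to derive the corollary as a direct consequence of Proposition \ref{gradprop}, which has already done all the substantive work. The proposition gives a canonical isomorphism $\Phi: \bigoplus_{n\in \mathbb{Z}}K^{geo}_*(\gr,n\alpha)\xrightarrow{\cong} K^{geo}_{*}(R_\alpha)$ together with the compatibility $\Phi \circ \bigl(\bigoplus_{n\in \mathbb{Z}}\mu_{n\alpha}\bigr) \circ \text{(sign)} = \mu_{R_\alpha}$ (up to the analogous analytic identification in the target using the grading \ref{gradext}). So the proof is essentially the observation that both vertical sides of the commutative square
\begin{equation*}
\xymatrix{
\bigoplus_{n\in\mathbb{Z}}K^{geo}_*(\gr,n\alpha) \ar[r]^-{\cong}\ar[d]_-{\bigoplus_n \mu_{n\alpha}} & K^{geo}_{*}(R_\alpha) \ar[d]^-{\mu_{R_\alpha}}\\
\bigoplus_{n\in\mathbb{Z}}K^*(\gr,n\alpha) \ar[r]_-{\cong} & K^*(R_\alpha)
}
\end{equation*}
are forced to be isomorphisms simultaneously.

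First I would invoke Proposition \ref{gradprop} to obtain the above commutative diagram, in which the top horizontal arrow comes from the cycle construction $[P,x]\mapsto [P_\Omega,x]$ (together with the surjectivity argument using $P:=X/\sim$) and the bottom horizontal arrow is the canonical decomposition \ref{gradext} of $K^*(R_\alpha)$ induced by the $S^1$-action on the extension. Then from the two-out-of-three property, $\bigoplus_n \mu_{n\alpha}$ is an isomorphism if and only if $\mu_{R_\alpha}$ is.

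The final step is the purely algebraic observation that a homomorphism between direct sums of the form $\bigoplus_n f_n : \bigoplus_n A_n \to \bigoplus_n B_n$ is an isomorphism if and only if each component $f_n$ is an isomorphism. This gives the "iff" statement. For the "in particular" clause, which concerns only a fixed $n=1$ (the given twisting $\alpha$), one simply restricts to the corresponding summand: if $\mu_{R_\alpha}$ is an isomorphism, then so is every $\mu_{n\alpha}$, and in particular $\mu_\alpha$ itself.

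There is really no obstacle here: the corollary is a formal consequence of the proposition, and no further geometric or analytic input is needed. The only thing worth being careful about is making sure that the horizontal identification on the analytic side is the same one (\ref{gradext}) used implicitly in the commutative triangle at the end of the proof of Proposition \ref{gradprop}, so that the decomposition of $\mu_{R_\alpha}$ really is term-by-term $\bigoplus_n \mu_{n\alpha}$ rather than some other grading; this was already checked in Proposition \ref{gradprop}, so it may be invoked without further comment.
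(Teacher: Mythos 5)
Your proposal is correct and matches the paper's (implicit) argument: the paper states the corollary without proof precisely because it is the formal consequence of Proposition \ref{gradprop} that you spell out, namely that a diagonal map $\bigoplus_n \mu_{n\alpha}$ between direct sums is an isomorphism if and only if each summand $\mu_{n\alpha}$ is. Your care about which grading identification is used on the analytic side is appropriate but, as you note, already settled in the proposition.
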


\subsection{Comparison with the analytic assembly map}
Until now we have not assumed our groupoids to be Hausdorff. For Hausdorff groupoids there is an analytic version of the assembly map that has been very productive in many applications, in particular thanks to the extensive use of Kasparov's KK-theory methods.

Let $R\rightrightarrows R_0$ be a Hausdorff Lie groupoid, we recall briefly the definition of its analytical K-homology group from  \cite{Tu3}:
\begin{equation}
K^{ana}_{*}(R):=\lim_{Y\subset {\bf E}R}KK_R^*(C_0(Y),C_0(R_0)).
\end{equation} 
Here ${\bf E}R$ is the universal space for proper $R$-actions. 

There is a canonical group morphism between the geometric and the analytical K-homology groups:
\begin{equation}
\xymatrix{
K^{geo}_{*}(R)\ar[r]^-{\lambda_R}&K^{ana}_{*}(R)
}
\end{equation}
that we will now explicitly describe: Let $[P,x]\in K^{geo}_{*}(R)$. We can construct an element
$(\pi_P)_!\in KK_R^*(T^vP,R_0)$ exactly as we did in section \ref{pushsection}. Now, by definition of ${\bf E}R$ there is a $Y\subset {\bf E}R$ and an element $c_P\in KK_R(Y,T^vP)$ induced by the classifying map $c:T^vP\to 
Y\subset {\bf E}R$. We set
\begin{equation}
\lambda_R([P,x])=[c_P\otimes_{T^vP}\pi_P]
\end{equation}
 
\begin{proposition}\label{assemblytopana}
We have the following commutative diagram:
\begin{equation}\label{710}
\xymatrix{
K^{geo}_{*}(R)\ar[rd]_-{\mu_{R}}\ar[rr]^-{\lambda_R}&&K^{ana}_{*}(R)
\ar[ld]^-{\mu_{R}^{ana}}\\
&K^*(R)&
}
\end{equation}
where $\mu_{R}^{ana}$ is the analytic assembly map, \cite{Tu3}.
We have moreover the Morita invariance of each morphism in (\ref{710}).
\end{proposition}


\subsection{Applications: Some cases where the geometric twisted assembly map is an isomorphism}
Still in the case of Hausdorff groupoids, proposition \ref{assemblytopana} implies the following:
\begin{corollary}\label{corassemblytopana}
If $\lambda_R:K^{geo}_{*}(R)\longrightarrow K^{ana}_{*}(R)$ is an isomorphism, then 
\begin{center}
$\mu_{R}$ is an isomorphism (resp. injective, resp. surjective) iff $\mu_{R}^{ana}$ is an isomorphism (resp. injective, resp. surjective).
\end{center}
\end{corollary}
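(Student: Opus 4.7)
The plan is to deduce this directly from the commutative triangle of Proposition~\ref{assemblytopana}. Since $\lambda_R$ is assumed to be an isomorphism, the triangle
\[
\xymatrix{
K^{geo}_{*}(R)\ar[rd]_-{\mu_{R}}\ar[rr]^-{\lambda_R}_-{\cong}&&K^{ana}_{*}(R)
\ar[ld]^-{\mu_{R}^{ana}}\\
&K^*(R)&
}
\]
yields the factorization $\mu_R = \mu_R^{ana}\circ \lambda_R$, or equivalently $\mu_R^{ana}=\mu_R\circ \lambda_R^{-1}$.

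From this identity the three implications are formal. First I would observe that composition of a morphism with an isomorphism preserves injectivity, surjectivity, and bijectivity; explicitly, if $\lambda_R$ is bijective then $\ker \mu_R = \lambda_R^{-1}(\ker \mu_R^{ana})$ and $\mathrm{im}\, \mu_R = \mathrm{im}\, \mu_R^{ana}$. This gives both directions simultaneously for the three properties stated. No further structure of $R$ is needed beyond its Hausdorffness, which ensures that the analytic assembly map and the comparison map $\lambda_R$ are defined in the first place.

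There is no substantial obstacle here: the statement is a formal consequence of Proposition~\ref{assemblytopana} and has the flavour of a "two-out-of-three'' argument for the composition of group homomorphisms. The only thing worth emphasizing in the write-up is that one uses the commutativity at the level of abelian group homomorphisms (not merely up to some equivalence), which is exactly what Proposition~\ref{assemblytopana} provides. Thus the proof reduces to one line invoking that proposition and the elementary fact about composition with an isomorphism.
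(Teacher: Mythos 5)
Your argument is correct and is exactly the intended one: the paper states this corollary as an immediate consequence of Proposition \ref{assemblytopana}, and your use of the factorization $\mu_R=\mu_R^{ana}\circ\lambda_R$ together with the elementary fact that pre-composition with an isomorphism preserves and reflects injectivity, surjectivity, and bijectivity is precisely what is meant. Nothing is missing.
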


\begin{examples}
Some examples of Lie groupoids for which the analytic assembly map is an isomorphism (or injective) are the following
\begin{enumerate}
\item injectivity for bolic groupoids (Tu \cite{Tu1})
\item isomorphism for groupoids having the Haagerup property (Tu \cite{Tu2})
\item isomorphism for almost connected Lie groups (Chabert-Echterhoff-Nest \cite{CEN})
\item isomorphism for hyperbolic groups (Lafforgue \cite{Laff12})
\end{enumerate}
\end{examples}
For the twisted case we put the last corollary together with corollary \ref{corextiso} to obtain:

\begin{corollary}\label{corcorassemblytopana}
Let $(\gr,\alpha)$ be a twisted (Hausdorff) Lie groupoid. Take $R_\alpha$ the corresponding $S^1$-central extension. Assuming $\lambda_{R_\alpha}:K^{geo}_{*}(R_\alpha)\longrightarrow K^{ana}_{*}(R_\alpha)$ is an isomorphism we have that the geometric twisted assembly map for $(\gr,\alpha)$ is an isomorphism whenever the analytic assembly map for $R_\alpha$ is.
\end{corollary}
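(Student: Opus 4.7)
The proof is a direct chaining of the two previously established corollaries, namely Corollary \ref{corextiso} and Corollary \ref{corassemblytopana}, once the Hausdorff hypothesis is transferred from $\gr$ to $R_\alpha$. The plan proceeds as follows.

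First, I would observe that since $\gr$ is assumed Hausdorff and $R_\alpha$ is built as an $S^1$-central extension of a Čech-type groupoid $\gr_\Omega$ (which is Hausdorff whenever $\gr$ is, being a disjoint union of restrictions of a Hausdorff groupoid to an open cover), the pullback construction
\[
\xymatrix{
R_\alpha \ar[r] \ar[d] & U(H) \ar[d] \\
\gr_\Omega \ar[r]_-{\alpha} & PU(H)
}
\]
produces a Hausdorff Lie groupoid $R_\alpha$. This ensures that the analytic assembly map $\mu_{R_\alpha}^{ana}$ and the comparison morphism $\lambda_{R_\alpha}$ from Proposition \ref{assemblytopana} are both defined, so that all the hypotheses of Corollary \ref{corassemblytopana} are available for $R:=R_\alpha$.

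Next, assuming $\lambda_{R_\alpha}$ is an isomorphism, Corollary \ref{corassemblytopana} yields the equivalence that $\mu_{R_\alpha}$ is an isomorphism if and only if $\mu_{R_\alpha}^{ana}$ is an isomorphism. On the other hand, Corollary \ref{corextiso} (which itself followed from Proposition \ref{gradprop} decomposing both $K^{geo}_*(R_\alpha)$ and $K^*(R_\alpha)$ as $\mathbb{Z}$-graded direct sums compatibly with the assembly maps) asserts that $\mu_{R_\alpha}$ is an isomorphism if and only if $\mu_{n\alpha}$ is an isomorphism for all $n\in \mathbb{Z}$. Specializing to $n=1$ (the graded component corresponding to $\alpha$ itself) gives the implication: whenever $\mu_{R_\alpha}^{ana}$ is an isomorphism, so is $\mu_\alpha$.

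There is essentially no obstacle here beyond the bookkeeping just described; the whole content of the corollary lies in the two ingredients already proved. The only point worth making explicit in the writeup is the direction of implication in Corollary \ref{corextiso}: the isomorphism of all $\mu_{n\alpha}$ is equivalent, not just implied by, the isomorphism of $\mu_{R_\alpha}$, so in particular we do not need to know anything about the other graded summands to conclude the statement about $\mu_\alpha$ — this is because the injection $K^{geo}_*(\gr,n\alpha)\hookrightarrow K^{geo}_*(R_\alpha)$ constructed in the proof of Proposition \ref{gradprop} is split by the $\mathbb{Z}$-gradation on the target, so the isomorphism $\mu_{R_\alpha}$ restricts summand-wise. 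This completes the argument in three lines once the two cited corollaries are in hand.
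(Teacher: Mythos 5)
Your proposal is correct and follows exactly the route the paper takes: Corollary \ref{corcorassemblytopana} is obtained by chaining Corollary \ref{corassemblytopana} (applied to $R=R_\alpha$) with Corollary \ref{corextiso}, extracting the summand $n=1$ from the $\mathbb{Z}$-graded decomposition of Proposition \ref{gradprop}. Your additional remarks on the Hausdorffness of $R_\alpha$ and on the direction of the equivalence in Corollary \ref{corextiso} are accurate bookkeeping that the paper leaves implicit.
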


\begin{example}
A very interesting example of the previous situation is when the groupoid 
$\gr$ satisfies the so called Haagerup property. Indeed, in this case, one can check that for any twisting $\alpha$, the correspondent extension groupoid $R_\alpha$ satisfies as well the Haagerup property. Then by Tu's theorem (\cite{Tu2} theorem 9.3, see also \cite{Tu3} theorem 6.1) the analytic assembly map for $R_\alpha$ is an isomorphism. This was already mentioned in Tu's habilitation \cite{Tu4} page 16.
Among the groupoids satisfying the Haagerup property one finds amenable groupoids.
\end{example}

A very interesting question then is the following one:

{\bf Question:} For which Lie groupoids is the comparison map between geometric and analytic K-homology an isomorphism?

In the twisted case the above question is even more precise: For which twisted Lie groupoids $(\gr,\alpha)$ is the comparison map $\lambda_{R_\alpha}$ an isomorphism?

Let us mention that different models for K-homology (at least in the untwisted case) were assumed by the experts to be isomorphic for many years. It was not until some years ago that a formal proof for some models was achieved (\cite{BHS,BOOSW}). In conclusion, the questions we are addressing are not trivial and, as we stated above, a positive answer have very interesting consequences. 

\appendix

\section{The twisted equivariant Thom isomorphism}

In this subsection we will establish  the Thom isomorphism in $\gr$-equivariant twisted K-theory which generalizes the non-equivariant twisted Thom isomorphism in \cite{CW}. We will need some basics on KK-theory:

\subsection{Hilsum-Skandalis-Le Gall descent functors and suspension maps on KK-theory}

In \cite{HS} section 2.1 Hilsum and Skandalis give a very explicit description of a group morphism
$$i^*:KK_H(A,B)\longrightarrow KK(A\rtimes_i \gr,B\rtimes_i \gr)$$
constructed from a groupoid cocycle
$$\gr\stackrel{i}{--->}H$$ for any $A,B$ $H$-algebras. The algebras $A\rtimes_i \gr,B\rtimes_i \gr$ are the naturally associated crossed products. In their case $\gr$ is an \'etale groupoid and $H$ is a Lie group. Already in their paper (Lemmas 2.1 and 2.2) they proved some very interesting functoriality  properties.  The Hilsum-Skandalis construction can be generalized for any groupoid cocycle between locally compact groupoids as shown by Le Gall in \cite{LeGall99}. Indeed, Le Gall gave  in his paper a precise definition for groupoid equivariant K-theory and constructs for every groupoid cocycle
$$\gr\stackrel{i}{--->}\hr$$ a descent morphism
$$i^*:KK_\hr(A,B)\longrightarrow KK_\gr(i^*A,i^*B)$$
for every $A,B$ $\hr$-algebras, and where $i^*A$ (resp. $i^*B$) is the naturally associated algebra in which $\gr$ acts via the cocycle $i$. The main result in \cite{LeGall99}, Theorem 7.2, states the functoriality and naturality with respect to the Kasparov product of the descent construction\footnote{The Kasparov descent morphisms are a particular case of Le Gall's construction, theorem 7.6 in \cite{LeGall99}.}.
To see how Hilsum-Skandalis construction is contained in Le Gall's one can consider the morphism 
$$KK_\gr(i^*A,i^*B)\stackrel{p^*}{\longrightarrow} KK(A\rtimes_i \gr,B\rtimes_i \gr)$$ 
associated to the projection\footnote{The inclusion of the units is a projection as a generalized morphism, it correspond to the "quotient" map if one interpret the groupoid as a model for the orbit space.} $p:\go --->\gr$
and then $p^*\circ i^*:KK_\hr(A,B)\to KK(A\rtimes_i \gr,B\rtimes_i \gr)$ is Hilsum-Skandalis morphism (that we can still denote $i^*$) for $\hr$ a Lie group.

We will also need to recall the suspension morphism on KK-theory (or  equivariant $KK$). Given a locally compact groupoid $\gr\rightrightarrows M$, for any $A,B,D$ $\gr-C^*$-algebras there is a suspension map
\begin{equation}
\sigma_{M,D}:KK_{\gr}^{i}(A,B)\longrightarrow KK_{\gr}^{i}(D\otimes_{C_0(M)}A,D\otimes_{C_0(M)}B)
\end{equation}
compatible with the $KK$-product, Theorem 6.4 in \cite{LeGall99}.

\subsection{The twisted equivariant Thom isomorphism}
Let  $\alpha$ be a twisting  on $\gr$ and  $\alpha_0$ be the induced twisting on the unit space  $M$.
Given  a $\gr$-manifold    $P$  and let $E\stackrel{q_E}{\longrightarrow}P$ be a $\gr$-oriented vector bundle over P. There are induced twistings $\pi_P^*\alpha_0$ on $P$ and $q_E^*\pi_P^*\alpha_0$ on $E$. 

In \cite{CW} (see also \cite{Kar08}) the twisted Thom isomorphism was established, it gives an isomorphism 

\begin{equation}\label{CWThom}
\xymatrix{
K_{\alpha_0}(P)\ar[r]^-{\mathscr{T}_{E}^{\alpha_0}}_-\cong&K_{\alpha_0+\o_E}(E)
}
\end{equation}
where $\o_E$ is the orientation twisting (\ref{otwisting}) and with a possible shift on the degree depending on the rank of $E$.

In fact, the isomorphism (\ref{CWThom}) can be explicitly described by the Kasparov product with an invertible KK-element 
\begin{equation}\label{CWKKThom}
\beta_{E}^{\alpha_0}\in KK^*(C^*(P,\alpha_0),C^*(E,\alpha_0+\o_E)).
\end{equation}
In the non-equivariant case we can suppose that the vector bundle $E$ is determined by a groupoid cocycle
$$P\stackrel{O_E}{--->}SO(n)$$

Let $C_\tau(\mathbb{R}^n)$ be the algebra of continuous sections vanishing at infinity of the Clifford bundle of $\mathbb{R}^n$. We consider the Thom element $\beta\in KK_{SO(n)}(\mathbb{C},C_\tau(\mathbb{R}^n))$ constructed by Kasparov (Lemma 4 in \cite{Kasparov95}) and usually called the Dual Dirac element. Then the Hilsum-Skandalis-Le Gall's construction yields an element
$$\beta_E:=O_E^*(\beta)\in KK(C_0(P),C^*(E,\o_E))$$
which corresponds by functoriality and naturality with respect to the product of Le Gall's construction to the Thom isomorphism for not necessarily $Spin^c$-vector bundles. Notice that above we can drop equivariant KK-theory since the groupoid is $P\rightrightarrows P$ which acts trivially.
Now, for taking into account $\alpha_0$ one  has  the following suspension map
\begin{equation}
\sigma_{P,C^*(P,\alpha_0)}:KK(C_0(P),C^*(E,\o_E))\longrightarrow KK(C_0(P)\otimes_{C_0(P)}C^*(P,\alpha_0),C^*(E,\o_E)\otimes_{C_0(P)}C^*(P,\alpha_0)).
\end{equation}
Notice  that 
\[
KK(C_0(P)\otimes_{C_0(P)}C^*(P,\alpha_0),C^*(E,\o_E)\otimes_{C_0(P)}C^*(P,\alpha_0))\cong KK(C^*(P,\alpha_0),C^*(E,\alpha_0+\o_E)))
\]
 thanks to proposition 4.8 in \cite{TXring}. We then finally obtain the twisted Thom element
$$\beta_{E}^{\alpha_0}:=\sigma_{P,C^*(P,\alpha_0)}(\beta_E)\in KK(C^*(P,\alpha_0),C^*(E,\alpha_0+\o_E))$$
which gives a $KK$-description of the Thom isomorphism (\ref{CWThom}) in twisted K-theory.



\vspace{3mm}

{\bf The equivariant case}

 
In the equivariant case, if $N\to M$ is $\gr-$manifold and we consider the twisting $\alpha_0$ induced on $N$, then there is no canonical action of $\gr$ on $C^*(N,\alpha_0)$. It is possible however to modify $N$ (by a Morita equivalent groupoid) such that the action is canonical. This is the subject of Theorem 4.2 in \cite{TXring}. Here we just do a different reading: Let $N$ be $\gr$-manifold with momentum map 
$\pi_N:N\to M$. Take as above a twisting $\alpha$ on $\gr$. There is a groupoid $\tilde{N}\rightrightarrows N'$ Morita equivalent to $N\rightrightarrows N$, admitting an action of $\gr$ together with a strict groupoid morphism 
$$\xymatrix{
\tilde{N}\rtimes \gr \ar[rr]^-{\alpha_{\tilde{N}}}&&PU(H)}$$
and an  explicit Morita equivalence 
$$\tilde{N}\rtimes \gr \stackrel{m_N}{--->} N\rtimes \gr$$
fitting the following commutative diagram of generalized morphisms
\begin{equation}
\xymatrix{
\tilde{N}\rtimes \gr \ar@{.>}[d]_-{m_N}\ar[rr]^-{\alpha_{\tilde{N}}}&&PU(H)\\
N\rtimes \gr \ar@{.>}[rru]_-{\pi_N^*\alpha} &&
}
\end{equation}

In particular, the $S^1$-central extension obtained from $\alpha_{\tilde{N}}$ is of the form
\begin{equation}\label{RMoritaNG}
S^1\to R_{\tilde{N}} \rtimes \gr \to \tilde{N} \rtimes \gr
\end{equation}
where 
$R_{\tilde{N}}$ corresponds to the $S^1$-central extension associated to the twisting $\tilde{\alpha_0}$ on $\tilde{N}$. The extension (\ref{RMoritaNG}) is Morita equivalent to the $S^1$-central extension associated to 
$\pi_N^*\alpha$. As an immediate corollary we get a Morita equivalence (\cite{TXring} corollary 4.6) between the $C^*$-algebras
$$C^*(R_{\tilde{N}})\rtimes \gr \sim C^*(R^{N}_{\alpha})$$
preserving the $\mathbb{Z}$-grading (\ref{gradext}). In particular for degree one we get a Morita equivalence
\begin{equation}\label{MoritaNG}
C^*(\tilde{N},\tilde{\alpha_0})\rtimes\gr \sim C^*(N\rtimes \gr,\alpha).
\end{equation}

Let us come back to the definition of the Thom isomorphism in the equivariant case. 

Now $E$ is a $\gr$-vector bundle over $P$. We assume\footnote{We are only interested in this case in this paper.} that $E$ can be obtained from a cocycle
$$O_E:P\rtimes \gr ---> SO(n),$$
or in other terms $E$ admits a $P\rtimes \gr$-invariant metric.

By Le Gall's descent construction we have a morphism
$$O_E^*:KK_{SO(n)}(\mathbb{C},C_\tau(\mathbb{R}^n))\longrightarrow 
KK_\gr(C^*(\tilde{P}), C^*(\tilde{E},\widetilde{\o_E})),$$
where  $\widetilde{\o_E}$  is  defined by  the equivariant orientation twisting $\o_E$   (\ref{otwisting}) associated to $E$.

Next, we consider the suspension map
\begin{equation}\label{A.8}
\sigma_{M,C^*(\tilde{P},\tilde{\alpha_0})}:KK_\gr(C^*(\tilde{P}), C^*(\tilde{E},\widetilde{\o_E}))\to KK_\gr(C^*(\tilde{P})\otimes_{C_0(M)}C^*(\tilde{P},\tilde{\alpha_0}), C^*(\tilde{E},\widetilde{\o_E})\otimes_{C_0(M)}C^*(\tilde{P},\tilde{\alpha_0}))
\end{equation}
and again by Proposition 4.8 in \cite{TXring} we have  canonical  isomorphisms
\begin{center}
$C^*(\tilde{P})\otimes_{C_0(M)}C^*(\tilde{P},\tilde{\alpha_0})\cong C^*(\tilde{P},\tilde{\alpha_0})$ and $C^*(\tilde{E},\widetilde{\o_E})\otimes_{C_0(M)}C^*(\tilde{P},\tilde{\alpha_0})\cong C^*(\tilde{E},\tilde{\alpha_0}+\widetilde{\o_E})$
\end{center}
and hence $\sigma_{M,C^*(\tilde{P},\tilde{\alpha_0})}$ can be considered to take values on $KK_\gr(C^*(\tilde{P},\tilde{\alpha_0}),C^*(\tilde{E},\tilde{\alpha_0}+\widetilde{\o_E}))$. Next we can apply the descent functor to get to $KK(C^*(\tilde{P},\tilde{\alpha_0})\rtimes \gr,C^*(\tilde{E},\tilde{\alpha_0}+\widetilde{\o_E})\rtimes \gr)$ and finally we can use the Morita equivalence (\ref{MoritaNG}) to obtain a
canonical  isomorphism
$$KK(C^*(\tilde{P},\tilde{\alpha_0})\rtimes \gr,C^*(\tilde{E},\tilde{\alpha_0}+\widetilde{\o_E})\rtimes \gr)\cong  KK(C^*(P\rtimes \gr, \alpha),C^*(E\rtimes \gr, \alpha+\o_E)).$$

We have a twisted equivariant Thom element
$$\beta_{E}^{\gr,\alpha}\in KK(C^*(P\rtimes \gr, \alpha),C^*(E\rtimes \gr, \alpha+\o_E)),$$
obtained from $\beta_n \in KK_{SO(n)}(\mathbb{C},C_\tau(\mathbb{R}^n))$ under the 
suspension map  (\ref{A.8}) and the above canonical isomorphisms.


\begin{definition}[Equivariant twisted Thom isomorphism]
We can consider the $K$-theory isomorphism:
\begin{equation}
\xymatrix{
K_{\alpha}^\gr(P)\ar[r]^-{\mathscr{T}_{E}^{\gr,\alpha}}_-{\cong}&
K_{\alpha+\o_E}^\gr(E)
}
\end{equation}
associated to the twisted  equivariant Thom element constructed above, more explicitly,
$$\mathscr{T}_{E}^{\gr,\alpha}(x):=x\otimes \beta_{E}^{\gr,\alpha},$$
where $\otimes$ stands for the Kasparov product over $C^*(P\rtimes \gr,\alpha)$. We will call the morphism given by the previous equation the $\gr$-equivariant twisted Thom isomorphism.
\end{definition}

\begin{remark}
The fact that is indeed the Thom isomorphism comes from the functoriality of the Hilsum-Skandalis-Le Gall's construction together with the compatibility of the suspension map with the Kasparov's product, theorem 7.2 in \cite{LeGall99}.
\end{remark}

The following proposition states some properties that justify the terminology "Thom isomorphism". Properties 2. and 3. are the analogs of propositions 2.9 and 3.6 in \cite{HS} in our setting.

\begin{proposition}\label{Thomproperties}
For the twisted equivariant Thom isomorphism we have the following three properties:
\begin{enumerate}
\item Let $P$ be a $\gr$-space and let $E\stackrel{}{\longrightarrow}P$ be a $\gr$-oriented vector bundle over P. Suppose  we have $\gr\stackrel{\phi}{--->}\gr'$ a generalized isomorphism. Given  $\alpha':\gr'---> PU(H)$ a twisting, there is an induced commutative diagram of isomorphisms between twisted K-theory groups:
\begin{equation}\label{ThomMoritadiagram}
\xymatrix{
K_{\alpha}^\gr(P)\ar[r]^-{\phi^P_*}_-{\cong}\ar[d]_-{\mathscr{T}_{E}^\gr}&K_{\alpha'}^{\gr'}(\phi(P))\ar[d]^-{\mathscr{T}_{\phi(E)}^{\gr'}}\\ 
K_{\alpha+\o_E}^\gr(E)\ar[r]_-{\phi^E_*}^-{\cong}&K_{\alpha'+\o_{\phi(E)}}^{\gr'}(\phi(E))
}
\end{equation}
where $\alpha:=\alpha'\circ \phi$.
\item[2.] Let $P$ be a $\gr$-manifold and $E_1,E_2$ two oriented $\gr$-vector bundles over $P$. Let  $\pi_1:E_1^*\to P$ be the dual vector vector of $E_1$. We have
\begin{equation}
\mathscr{T}_{\pi_1^*E_2}^\gr\circ \mathscr{F}_1 \circ \mathscr{T}_{E_1}^\gr
=\mathscr{F}_2 \circ \mathscr{T}_{E_1\oplus E_2}^\gr
\end{equation}
where $\mathscr{F}_1$ is the K-theory isomorphism induced from the $C^*$-algebra  Fourier isomorphism\footnote{we recall that in this Fourier transform $\gr$ acts on $E\rightrightarrows P$ (groupoid given by vector bundle structure) for the first factor and on $E_1^*\rightrightarrows E_1^*$ (trivial groupoid) on the second.} (\cite{CaWangAdv} proposition 2.12)
$$C^*(E_1\rtimes G,\alpha+\o_{E_1})\to C^*(E_1^*\rtimes G,\alpha+\o_{E_1}^*)$$
and where $\mathscr{F}_2$ is the K-theory isomorphism induced from the $C^*$-algebra  Fourier isomorphism \footnote{Again,$ E_1\oplus E_2\rightrightarrows P$ as vector bundle groupoid and $\pi_1E_2 \rightrightarrows E_1^*$.}
$$C^*((E_1\oplus E_2)\rtimes G,\alpha+\o_{E_1\oplus E_2})\to C^*(\pi_1^*E_2\rtimes G,\alpha+\o_{E_1\oplus E_2}^*)$$

\item[3.]  Let $P$ be a $\gr$-manifold, $E$ an oriented $\gr$-vector bundle over $P$ and $E'$ an oriented $\gr$-vector bundle over $P$ together with a $\gr$-vector bundle $E\to E'$ morphism, we have
\begin{equation}
 \mathscr{T}_{E'}^{E\rtimes\gr}\circ \mathscr{T}_{E}^\gr=\sigma_{E'\rtimes E}^\gr\circ\mathscr{T}_{E\oplus E'}^\gr
\end{equation}
where $\sigma\in KK(((E\oplus E')\rtimes \gr, \alpha+ \o_{E'\oplus E}),(E'\rtimes E)\rtimes \gr,\alpha))$ is the deformation index associated to the deformation groupoid
$$\gr_{E'\rtimes E}:=(E\oplus E')\rtimes \gr\bigsqcup (E'\rtimes E)\rtimes \gr \times (0,1]$$
which can be obtained as the semidirect product of the tangent groupoid of $(E'\rtimes E)$ by the action of $\gr$.
\end{enumerate}
\end{proposition}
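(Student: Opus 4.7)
My plan is to prove all three properties at the level of the underlying $KK$-classes, using the naturality and multiplicativity of Le Gall's descent construction together with the universal character of Kasparov's Thom element $\beta_n\in KK_{SO(n)}(\mathbb{C},C_\tau(\mathbb{R}^n))$. In each case one needs to identify a single $KK$-element whose images under two different functorial procedures agree; once this identification is made, the compatibility of the Kasparov product with descent and suspension (Le Gall, Theorem 7.2) does the rest.

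For property 1, the Morita equivalence $\phi$ intertwines the orienting cocycles: the composition $O_{\phi(E)}\circ \phi^P$ is canonically equivalent to $O_E$ as a cocycle $P\rtimes\gr\dashrightarrow SO(n)$. Pulling back $\beta_n$ through both factorisations therefore yields two generalized-morphism descents of the same $KK_{SO(n)}$-class. Applying the suspensions by $C^*(\tilde P,\tilde\alpha_0)$ and $C^*(\widetilde{\phi(P)},\widetilde{\alpha'_0})$ (which are intertwined by the Morita equivalence (\ref{MoritaNG}) in Lemma \ref{etaMorita} style) transports this compatibility to the twisted-crossed-product level. The commutativity of diagram (\ref{ThomMoritadiagram}) then follows from the naturality of the Kasparov product under the descent induced by $\phi$.

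For property 2, I first establish the multiplicativity identity $\beta_{E_1\oplus E_2}^{\gr,\alpha}=\beta_{\pi_{E_1}^*E_2}^{E_1\rtimes\gr,\alpha+\o_{E_1}}\otimes \beta_{E_1}^{\gr,\alpha}$, which is a direct consequence of the product formula $\beta_{m+n}=\beta_m\otimes\beta_n$ for Kasparov's dual Dirac element pulled back along the cocycle for $E_1\oplus E_2\to SO(m+n)$. The question then reduces to showing that Fourier in the $E_1$-variable commutes with Thom in the transverse direction, i.e.\ $\mathscr{F}_2\circ \mathscr{T}_{\pi_{E_1}^*E_2}^{E_1\rtimes\gr}=\mathscr{T}_{\pi_1^*E_2}^{E_1^*\rtimes\gr}\circ \mathscr{F}_1$. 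But the Fourier isomorphism is induced by a $C^*$-algebra isomorphism operating purely on the $E_1$-factor, while the Thom element for $E_2$ lives in the orthogonal tensor factor over $P$; this is the analogue of Proposition 2.9 in \cite{HS} and follows from the associativity and compatibility of tensor products with the Kasparov product.

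For property 3 I construct a single Thom-type $KK$-element over the deformation groupoid $\gr_{E'\rtimes E}$ which interpolates between the two sides. Concretely, the tangent groupoid of $E'\rtimes E$ has Lie algebroid canonically isomorphic to the vector bundle $E\oplus E'\to P$ at $t=0$, and equals $E'\rtimes E$ for $t\neq 0$; pulling back the universal Thom class along the orientation cocycle of the deformed bundle yields an element in $KK\!\left(C^*(P\rtimes\gr,\alpha),C^*(\gr_{E'\rtimes E}\rtimes\gr,\alpha+\o_{E\oplus E'})\right)$. Evaluating at $t=0$ recovers $\mathscr{T}_{E\oplus E'}^\gr$, while evaluating at $t=1$ yields, via the iterated Thom construction as in property 2, the composition $\mathscr{T}_{E'}^{E\rtimes\gr}\circ\mathscr{T}_{E}^\gr$. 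Comparing the two evaluations through the deformation exact sequence gives the desired identity $\mathscr{T}_{E'}^{E\rtimes\gr}\circ\mathscr{T}_{E}^\gr=\sigma_{E'\rtimes E}^\gr\circ \mathscr{T}_{E\oplus E'}^\gr$. I expect the main technical obstacle here to be the careful verification that the deformed orientation cocycle extends smoothly across $t=0$ and that the suspension by the twisted algebra $C^*(\tilde P,\tilde\alpha_0)$ is compatible with this deformation; both points can be handled by the functoriality of the deformation-to-the-normal-cone construction (\ref{fundnc}) applied in the $\gr$-equivariant setting, much as in Proposition \ref{deftwistimm}.
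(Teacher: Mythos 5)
Your proposal is correct and follows essentially the same route as the paper: properties 1 and 2 are reduced to the functoriality of the Hilsum--Skandalis--Le Gall descent and its naturality with respect to Kasparov products and suspension, and property 3 is proved by the Thom element of the tangent groupoid $\mathbb{T}_{E'\rtimes E}$, viewed as a $\gr\times[0,1]$-vector bundle over $P\times[0,1]$, compared through the evaluations $e_0$ and $e_1$ exactly as in the paper's commutative diagram (following Hilsum--Skandalis 3.6). Your elaborations (the intertwining of orientation cocycles under $\phi$ for property 1, and the factorisation $\mathscr{T}_{E_1\oplus E_2}=\mathscr{T}_{\pi_{E_1}^*E_2}^{E_1\rtimes\gr}\circ\mathscr{T}_{E_1}^{\gr}$ plus commutation of Fourier with the transverse Thom for property 2) are consistent unpackings of the paper's one-line appeal to Le Gall's Theorem 7.2.
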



\begin{proof}
Properties 1. and 2. follow immediately from functoriality of Le Gall's descent functors  and its naturality with respect to Kasparov products , theorem 7.2 in \cite{LeGall99},  together with the compatibility of the suspension map with the Kasparov's product.

The proof of property 3. is essentiality the same as the proof 3.6 in \cite{HS}, that is, one observes that the tangent groupoid of $E'\rtimes E$, $\mathbb{T}_{E'\rtimes E}$, is a $\gr\times [0,1]$-vector bundle over $P\times [0,1]$, and one can then consider its twisted Thom isomorphism. We have the following diagram\footnote{remember that in our notation $\alpha$ stands for the given twisting on $\gr$, and that we keep denoting by $\alpha$ all the canonically induced twistings from it.} 
\[
\xymatrix{
K^*(P\rtimes \gr,\alpha)\ar[rr]^-{\mathscr{T}_E^\gr}&&K^*(E\rtimes \gr,\alpha+\o_E)\ar[rr]^-{\mathscr{T}_{E'}^{E\rtimes\gr}}&&K^*((E'\rtimes E)\rtimes \gr,\alpha+\o_{E'\oplus E})\\
K^*((P\rtimes \gr)\times [0,1],\alpha)\ar[d]_-{e_0}^-{\cong}\ar[u]_-{e_1}\ar[u]^-{e_1}_-{\cong}\ar[rrrr]_-{\mathscr{T}_{\mathbb{T}_{E'\rtimes E}}^\gr}&&&&
K^*(\mathbb{T}_{E'\rtimes E}\rtimes \gr,\alpha+\o_{E'\oplus E})\ar[d]^-{e_0}_-{\cong}\ar[u]_-{e_1}\\
K^*(P\rtimes \gr,\alpha)\ar[rrrr]_-{\mathscr{T}_{E'\oplus E}^{\gr}}&&&&K^*((E'\oplus E)\rtimes \gr,\alpha+\o_{E'\oplus E})
}
\]
which is commutative. Indeed the top rectangle is commutative by using again Le Gall's theorem, and the bottom one is trivially commutative (deformation indices are compatibles with morphisms induced by evaluations). The result follows from the fact that the left horizontal arrow is the identity.
 
\end{proof}


\bibliographystyle{plain}
\bibliography{./bibliographie}

\end{document}